\def\S{{\bf S}}
\def\S{{\bf S}}
\def\0{{\bf 0}}
\def\1{{\bf 1}}
\def\OM{{\mathcal O}}
\def\SM{{\mathcal S}}
\def\RB{{\mathbb R}}
\def\RBmn{{\RB^{m\times n}}}
\def\EB{{\mathbb E}}
\def\PB{{\mathbb P}}
\def\argmin{\mathop{\rm argmin}}
\def\nnz{\mathrm{nnz}}
\def\diag{\mathrm{diag}}
\def\RB{{\mathbb R}}
\title{Revisiting Sub-sampled Newton Methods}
\author{\name  Haishan Ye, Luo Luo   \\
	\addr {yhs12354123@gmail.com; rickyluoluo@gmail.com } \\
	Department of Computer Science and Engineering \\
	Shanghai Jiao Tong University \\
	800 Dong Chuan Road, Shanghai, China 200240	
	\AND
	\name Zhihua Zhang   \\
	\addr zhzhang@gmail.com \\
	School of Mathematical Science \\
	Peiking  University \\
	Beijing, China 100871
}
\begin{document}
	
	\maketitle
	
	\begin{abstract}
		
		Many machine learning models depend on solving a large scale  optimization problem.
		Recently, sub-sampled Newton methods have emerged to attract much attention for optimization
		due to their efficiency  at
		each iteration, rectified a weakness in the ordinary Newton method of suffering a high cost at each
		iteration while commanding a high convergence rate.
		In this work we
		propose two new efficient Newton-type methods, \emph{Refined Sub-sampled Newton} and \emph{Refined Sketch Newton}.
		Our methods  exhibit a great advantage over existing sub-sampled Newton methods, especially when Hessian-vector multiplication can be calculated efficiently and Hessian matrix is ill-conditioned.
		Specifically, the proposed methods are shown to converge superlinearly in general case and quadratically  under a little stronger assumption.
		The proposed methods can be generalized to a unifying framework for the convergence proof of several existing sub-sampled Newton methods, revealing new convergence properties.
		Finally, we empirically evaluate the performance of our methods on several standard datasets and
		the results show consistent improvement in computational efficiency.
	\end{abstract}
	
	\section{Introduction}
	We consider the following optimization problem
	\begin{equation}
	\min_{x} F(x) = \frac{1}{n}\sum_{i=1}^{n}f_i(x), \label{eq:prob_desc}
	\end{equation}
	where $x\in\RB^p$, functions $f_i:\RB^{p}\to \RB$, and $n$ is assumed to be far larger than $p$ (i.e., $n\gg p$). Many machine learning models can be expressed as \eqref{eq:prob_desc} where each $f_i$ is the loss w.r.t.\ the $i$-th sample. There are many such examples, e.g.,  logistic regressions, support vector machines, neural networks,  and graphical models.
	
	Many optimization algorithms to solve problem~\eqref{eq:prob_desc} are based on the following iteration:
	\begin{equation}
	x^{(t+1)} = x^{(t)} - \eta_t Q_t \text{g}(x^{(t)}),
	\end{equation}
	where $t$ is the number of iterations.
	If $Q_t$ is the identity matrix and $\text{g}(x^{(t)}) = \nabla F(x^{(t)})$, the resulting procedure is called \emph{Gradient Descent} (GD) which achieves sublinear convergence for general smooth convex objective and linear convergence for smooth-strongly convex ones. When $n$ is large, the full gradient method is inefficient due to its iteration cost scaling linearly in $n$. Consequently, stochastic gradient descent (SGD) has been  a typical alternation \cite{robbins1951stochastic,li2014efficient,cotter2011better}. Such a method samples a small mini-batch of data to construct an approximate gradient to achieve cheaper cost in each iteration. However, the convergence rate can be significantly slower than that of the full gradient methods \cite{nemirovski2009robust}. Thus, a  great deal of efforts have been made to devise modification to achieve the convergence rate of the full gradient while keeping low iteration cost \cite{johnson2013accelerating, roux2012stochastic,schmidt2013minimizing,Zhang}.

	If $Q_t$ is a $p\times p$ positive definite matrix of containing the curvature information, this  formulation leads us to \emph{second-order} methods. It is well known that second order methods enjoy superior convergence rate in both theory and practice compared to \emph{first-order} methods which only make use of the gradient information. The standard Newton Method, where  $Q_t = [\nabla^2 F(x^{(t)})]^{-1}$,  $\text{g}(x^{(t)}) = \nabla F(x^{(t)})$ and $\eta_t = 1$,  achieves a quadratic convergence rate for smooth-strongly convex objective functions. However, the \emph{Newton method} takes $\OM(np^2+p^3)$ cost per iteration, so it becomes extremely expensive when $n$ or $p$ are very large. As a result, one tries to construct an approximation of the Hessian in a way that the update is computationally feasible, and yet still provides sufficient second order information. One  class of such methods are quasi-Newton methods, which are a generalization of the secant method to find the root of the first derivative for multidimensional problems. The celebrated Broyden-Fletcher-Goldfarb-Shanno (BFGS) and its limited memory version (L-BFGS) are the most popular and widely used \cite{nocedal2006numerical}. They take $\OM(np+p^2)$ cost per iteration.
	
	Recently,   when $n\gg p$, a class of called \emph{sub-sampled Newton} methods have been proposed, which define an approximate Hessian matrix on a  small subset of samples.  The most naive approach is to sample a subset of functions $f_i$ randomly \cite{roosta2016sub,byrd2011use,xu2016sub} to construct a sub-sampled Hessian. \citet{erdogdu2015convergence} proposed NewSamp which solves the problem that sampled Hessian may be ill-conditioned.  When the Hessian can be written as  $B^T B$ where $B$ is an available $n\times p$ matrix, \citet{pilanci2015newton} then proposed to use sketching techniques to approximate the Hessian. Similarly, \citet{xu2016sub} proposed to sample rows of $B$ with non-uniform probability distribution. \citet{agarwal2016second} proposed an algorithm called LiSSA to approximate the inversion of Hessian directly. 
	
	In the past few years, variants of sub-sampled Newton methods have been proposed. And the convergence properties have been analyzed. However, there are several important problems related to sub-sampled Newton methods are still open.
	
	\begin{enumerate}
		\item Can sub-sampled Newton methods achieve superlinear  even quadratic convergence rate without increasing sampling number?  
		\item Is there a unifying framework to analyzing the convergence properties of sub-sampled Newton methods?
		\item Is Lipschitz continuity condition  necessary for the convergence of sub-sampled Newton methods? If not, when is needed?   
	\end{enumerate}
	
	The first problem is important both in theory and application. An optimization algorithm with superlinear and quadratic convergence is appealing in most cases. The second problem is of great significance in analyzing the convergence properties sub-sampled Newton methods. Besides, a unifying framework can provide some potential inspirations for developing more efficient sub-sampled Newton methods. The third question is also of great importance both theory and application. Without the constrain of Lipschitz continuity condition, sub-sampled Newton methods can be widely used in optimization problems. In fact, \citet{erdogdu2015convergence}  found that NewSamp can be used in training SVM which did not meet the Lipschitz continuity condition. They concluded that NewSamp can be used in optimization problems where Lipschitz continuity condition is not satisfied empirically but without any theoretical analysis.   
	
	In this paper we will answer the above problems. And we summarize our contribution as follow.
	
	\begin{enumerate}
		\item We propose the \emph{Refined Sub-sampled Newton} (ReSubNewton) and \emph{Refined Sketch Newton} (ReSkeNewton), which  converge superlinearly in general case and quadratically with a little stronger assumption without increasing sampling number which answer the first problem. To the best of our knowledge, it is the first work to show that  the sub-sampled Newton method can achieve quadratic convergence rate. To achieve superlinear convergence rate, the existing methods need to sample more and more samples as iteration goes, which may turn into the exact Newton method and lose the computational efficiency. Our methods do not need any additional samples but several matrix-vector multiplications and Hessian-vector multiplications which is cheap using a `Hessian-free' technique. Especially, when Hessian-vector multiplication can be calculated very efficient, our methods have great advantage over other existing sub-sampled Newton methods.
		\item We observe that  {sub-sampled Newton} methods can be viewed as an inexact Newton procedure~\cite{nocedal2006numerical,dembo1982inexact}. Theorem~\ref{thm:univ_frm} gives a unifying framework to analyze variants of sub-sampled Newton methods which answer the second problem. The convergence properties of sub-sampled Newton methods can be analyzed easily and systematically. In Section~\ref{sec:frame}, we analyze several important variants of sub-sampled Newton method. More importantly, Theorem~\ref{thm:univ_frm} reveals the sufficient conditions to achieve different convergence rate.
		\item Theorem~\ref{thm:univ_frm} also shows that Lipschitz continuity condition is not necessary for achieving linear and superlinear convergence. And it is needed to obtain quadratic convergence. Theorem~\ref{thm:univ_frm} not only explains  the phenomenon that NewSamp \cite{erdogdu2015convergence} can be used to train SVM  which the Lipschitz continuity condition is not satisfied, but also shows that the convergence rate is linear. Hence, we clarify the third problem and greatly widen the range of applications of variants of sub-sampled Newton methods.   
		\item We analyze the convergence properties of inexact sub-sampled Newnton method and provide a practical stop criterion to get the product of inverse of sub-sampled Hessian and gradient iteratively. We also give analysis to Newton method with sub-sampled Hessian and sub-sampled gradient and obtain new convergence properties which are preferable to previous work.
	\end{enumerate}
	\begin{algorithm}[tb]
		\caption{Sub-sample Newton.}
		\label{alg:H_subsamp}
		\begin{small}
			\begin{algorithmic}[1]
				\STATE {\bf Input:} $x^{(0)}$, $0<\delta<1$, $0<\epsilon<1$;
				\STATE Set the sample size $|\mathcal{S}| \geq \frac{16K^2\log(2p/\delta)}{\sigma^2 \epsilon^2}$.
				\FOR {$t=0,1,\dots$ until termination}
				\STATE Select a sample set $\SM$, of size $|\SM|$ and $H^{(t)} = \frac{1}{|\SM|}\sum_{j\in\mathcal{S}}\nabla^2 f_j(x^{(t)})$;
				\STATE Update $x^{(t+1)}= x^{(t)}-[H^{(t)}]^{-1}\nabla F(x^{(t)})$;
				\ENDFOR
			\end{algorithmic}
		\end{small}
	\end{algorithm}
	
	\begin{algorithm}[tb]
		\caption{Sketch Newton.}
		\label{alg:sketch_newton}
		\begin{small}
			\begin{algorithmic}[1]
				\STATE {\bf Input:} $x^{(0)}$, $0<\delta<1$, $0<\epsilon<1$;
				\FOR {$t=0,1,\dots$ until termination}
				\STATE Construct an $\epsilon\frac{\sigma}{K}$-subspace embedding matrix $S$ for $B(x^{(t)})$ and where $\nabla^2 F(x)$ of the form $ \nabla^2 F(x) = (B(x^{(t)}))^TB(x^{(t)})$, calculate $H^{(t)} = [B(x^{(t)})]^{T}S^{T}SB(x^{(t)})$;
				\STATE Update $x^{(t+1)}= x^{(t)}-[H^{(t)}]^{-1}\nabla F(x^{(t)})$;
				\ENDFOR
			\end{algorithmic}
		\end{small}
	\end{algorithm}
	\subsection{Related Work}
	\citet{byrd2011use} proposed a sub-sampled Newton method which is similar to Sub-sampled Newton (SubNewton Algorithm~\ref{alg:H_subsamp}) and approximates the product of inverse of sub-sampled Hessian and gradient by conjugate gradient. The asymptotic convergence of the method was established but without quantitative bounds in \cite{byrd2011use}. \citet{erdogdu2015convergence}  then  gave local convergence analysis of sub-sampled Newton method and proposed Newsamp (Algorithm~\ref{alg:NewSamp} in Appendix). \citet{pilanci2015newton}  first  used `sketching' within the context of Newton-like methods. The authors proposed a randomized second-order method which is based on performing an approximate Newton's step using randomly sketched Hessian and gave the detailed analysis of convergence properties. The algorithm is referred as Sketch Newton (SkeNewton Algorithm~\ref{alg:sketch_newton}) in this paper.   Similarly, \citet{xu2016sub} proposed to sketching Hessian matrix with non-uniform probability distribution. \citet{agarwal2016second} proposed an algorithm called LiSSA to approximate the inversion of Hessian directly.
	
	\citeauthor{roosta2016subI} analyzed the global and local convergence rates of variants of sub-sampled Newton methods in detail \cite{roosta2016subI,roosta2016sub}.
	The work of \cite{roosta2016subI,roosta2016sub} focused on constrained optimization problem. Our work focuses on unconstrained optimization problem and proposes a proof framework that the analysis of \cite{roosta2016sub} can also be fitted into.  Though our work focuses on unconstrained optimization problems, we can use project iteration onto the constrained set for the constrained optimization problem.
	
	\begin{algorithm}[tb]
		\caption{Refined Sub-Sample Newton.}
		\label{alg:H_subsamp_iter}
		\begin{small}
			\begin{algorithmic}[1]
				\STATE {\bf Input:} $x^{(0)}$, $0<\delta<1$, $0<\epsilon<1$, $tol$;
				\STATE Set the sample size $|\mathcal{S}| \geq \frac{16K^2\log(2p/\delta)}{\sigma^2 \epsilon^2}$.	
				\FOR {$t=0,1,\dots$ until termination}
				\STATE Select a sample set $\SM$, of size $|\SM|$ and construct $H^{(t)} = \frac{1}{|\SM|}\sum_{j\in\mathcal{S}}\nabla^2 f_j(x^{(t)})$;
				\STATE Calculate $[H^{(t)}]^{-1}$, $p^{(t)} = [H^{(t)}]^{-1}\nabla F(x^{(t)}) $ and $r^{(t)} = \nabla^2 F(x^{(t)})p^{(t)}-\nabla F(x^{(t)})$;
				\WHILE  {$\|r^{t}\| > tol$}
				\STATE Calculate $p^{(t)}_r = [H^{(t)}]^{-1} r^{(t)}$, update $p^{(t)} = p^{(t)}+p^{(t)}_r$;
				\STATE Calculate $r^{(t)} = \nabla^2 F(x^{(t)})p^{(t)}-\nabla F(x^{(t)})$;
				\ENDWHILE
				\STATE Update $x^{(t+1)}= x^{(t)}-p^{(t)} $;
				\ENDFOR
			\end{algorithmic}
		\end{small}
	\end{algorithm}
	\begin{algorithm}[tb]
		\caption{Refined Sketch Newton.}
		\label{alg:Sketch_Newton_iter}
		\begin{small}
			\begin{algorithmic}[1]
				\STATE {\bf Input:} $x^{(0)}$, $0<\delta<1$, $0<\epsilon<1$, $tol$;
				\FOR {$t=0,1,\dots$ until termination}
				\STATE Construct a $\epsilon$-subspace embedding matrix $S$ and $H^{(t)} = [B(x^{(t)})]^TS^TS B(x^{(t)})$, where $\nabla^2 F(x)$ of the form $ \nabla^2 F(x) = [B(x^{(t)})]^TB(x^{(t)})$;
				\STATE Calculate $[H^{(t)}]^{-1}$, $p^{(t)} = [H^{(t)}]^{-1}\nabla F(x^{(t)}) $ and $r^{(t)} = \nabla^2 F(x^{(t)})v^{(t)}-\nabla F(x^{(t)})$;
				\WHILE  {$\|r^{t}\| > tol$}
				\STATE Calculate $p^{(t)}_r = [H^{(t)}]^{-1} r^{(t)}$, update $p^{(t)} = p^{(t)}+p^{(t)}_r$;
				\STATE Calculate $r^{(t)} = \nabla^2 F(x^{(t)})p^{(t)}-\nabla F(x^{(t)})$;
				\ENDWHILE
				\STATE Update $x^{(t+1)}= x^{(t)}-p^{(t)} $;
				\ENDFOR
			\end{algorithmic}
		\end{small}
	\end{algorithm}
	
	\begin{algorithm}[tb]
		\caption{Preconditioned Newton-CG with Subsampled Hessian.}
		\label{alg:preconditioned Newton-CG}
		\begin{small}
			\begin{algorithmic}[1]
				\STATE {\bf Input:} $x^{(0)}$, $0<\delta<1$, $0<\epsilon<1$, $tol$;
				\FOR {$t=0,1,\dots$ until termination}
				\STATE Construct a sub-sampled Hessian or sketched Hessian $H^{(t)}$ with parameter $\epsilon$;
				\STATE Set $p_0 = 0$, $r_0 = \nabla F(x^{t})$;
				\STATE Solve $H^{(t)}y_0 = r_0$ for $y_0$ and set $d_0 = -y_0$;
				\FOR  {$i = 0,1,2,\dots$}
				\STATE Set $\alpha_i = r_i^Tr_i/d_i^T\nabla^2 F(x^{(t)})d_i$;
				\STATE Set $z_{i+1} = z_i + \alpha_id_i$;
				\STATE Set $r_{i+1} = r_i + \alpha_i\nabla^2 F(x^{(t)})d_i$
				\IF {$\|r_{i+1}\| \leq tol$}
				\STATE $p^{(t)}=-r_{i+1}$;
				\STATE Break;
				\ENDIF
				\STATE Solve $H^{(t)}y_{i+1} = r_{i+1}$;
				\STATE Set $\beta_{i+1} = r_{i+1}^{T}y_{i+1}/r_i^{T}y_i$
				\STATE Set $d_{i+1} = -y_{i+1} + \beta_{i+1}d_i$
				\ENDFOR
				\STATE Update $x^{(t+1)}= x^{(t)}-p^{(t)} $;
				\ENDFOR
			\end{algorithmic}
		\end{small}
	\end{algorithm}
	
	\section{Notation and Preliminaries}
	
	In this section, we introduce the notation and  preliminaries that will be used in this paper.
	\subsection{Notation}
	Given a matrix $A=[a_{ij}] \in \RB^{m \times n}$ of rank $ \rho $,  its SVD is given as
	$A=U\Sigma V^{T}=U_{k} \Sigma_{k} V_{k}^{T}+U_{\rho\setminus k} \Sigma_{\rho{\setminus} k} V_{\rho{\setminus}k}^{T}$,
	where $U_{k}$ and $U_{\rho{\setminus}k}$ contain the left singular vectors of $A$,  $V_{k}$ and $V_{\rho{\setminus}k}$ contain the right
	singular vectors of $A$, and $\Sigma=\diag(\sigma_1, \ldots, \sigma_{\rho})$ with $\sigma_1\geq \sigma_2 \geq \cdots \geq \sigma_{\rho}>0$ are
	the nonzero singular values of $A$. If $A$ is positive semidefinite, then $U = V$ and the square root of $A$ can be defined as $A^{1/2} = U\Sigma^{1/2}U^T$.
	
	Additionally,  $\|A\|_{F} \triangleq (\sum_{i,j}a_{ij}^{2})^{1/2}=(\sum_{i}\sigma_{i}^{2})^{1/2}$
	is the Frobenius norm of $A$ and
	$\|A\|\triangleq \sigma_{1}$ is the spectral norm. If $A$ is a positive definite matrix, $\|x\|_A\triangleq \|A^{1/2}x\|$ is called  $A$-norm. The condtion number of $A$ is defined as $\kappa(A) \triangleq \frac{\sigma_1}{\sigma_q}$. 
	
	Throughout this paper, we use notions of linear convergence rate, superlinear convergence rate and quadratic convergence rate. In our paper, the convergence rates we will use are defined in a little different way from standard ones. A sequence of vectors $\{x^{(t)}\}$ is said to converge linearly to a limit point $x^*$, if for some $0\leq \rho<1$,
	\[
	\limsup_{t\to \infty} \frac{\|\nabla^2F(x^{*})(x^{(t+1)} - x^{*})\|}{\|\nabla^2F(x^{*})(x^{(t)} - x^{*})\|} = \rho,
	\]
	where $F(x)$ the function we want to optimize.
	Similarly, superlinear convergence and quadratic convergence are respectively defined  as
	\begin{align*}
	\limsup_{t \to \infty} \frac{\|\nabla^2F(x^{*})(x^{(t+1)} - x^{*})\|}{\|\nabla^2F(x^{*})(x^{(t)} - x^{*})\|} = 0,\\
	\limsup_{t \to \infty} \frac{\|\nabla^2F(x^{*})(x^{(t+1)} - x^{*})\|}{\|\nabla^2F(x^{*})(x^{(t)} - x^{*})\|^2} = \rho.
	\end{align*}
	
	We call it as linear-quadratic convergence rate shown as below
	\[
	\|\nabla^2F(x^{*})(x^{(t+1)} - x^{*})\| \leq \rho_1\|\nabla^2F(x^{*})(x^{(t+1)} - x^{*})\| + \rho_2 \|\nabla^2F(x^{*})(x^{(t)} - x^{*})\|^2,
	\]
	where $0<\rho_1<1$.
	Besides, we assume that each $f_i$ is convex and twice differentiable. And the Lipschitz continuity condition for Hessian is defined as follows:
	\[
	\|\nabla^2F(x) - \nabla^2F(y)\| \leq L\|x-y\|,
	\]
	where $L>0$ is the Lipschitz constant.
	
	We also assume that each $f_i$ and $F$ have the following properties:
	\begin{align}
	\max_{i\leq n}\|\nabla^2f_i(x)\| \leq K < \infty,\label{eq:k} \\
	\lambda_{\min}(\nabla^2F(x^{(t)}))\geq \sigma>0. \label{eq:sigma}
	\end{align}

	\subsection{Randomized sketching matrices} \label{subsec:ske_mat}
	We first give an $\epsilon$-subspace embedding property which will be used in ReSkeNewton . Then we list some useful different types of randomized sketching matrices.
	\begin{definition} \label{lem:sub-embed}
		$S\in\RB^{s\times m}$ is an $\epsilon$-subspace embedding matrix for any fixed matrix $A\in\RB^{m\times d}$. Then, for all $x \in \RB^{d}$,
		$\|SA x\|_{2}^{2}=(1\pm\epsilon)\|Ax\|_{2}^{2}$.
	\end{definition}
	
	{\bf{Gaussian sketching matrix:}} The most classical sketching matrix is Gaussian sketching matrix $S\in\RB^{s\times m}$ with i.i.d \ normal random variables with variance $1/s$. Because of well-known concentration properties of Gaussian random matrices \cite{woodruff2014sketching}, gaussian random matrices are very attractive. Besides, $s = \OM(d/\epsilon^2)$ is enough to guarantee $\epsilon$-subspace embedding property any fixed matrix $A\in\RB^{m\times d}$. $s = \OM(d/\epsilon^2)$ is the tightest bound in known types of sketching matrices. However, Gaussian random matrices are dense matrices. It is costly to compute $SA$.
	
	{\bf Random sampling sketching matrix:}
	Let $V\in\RB^{m\times d}$  be column orthonormal basis for $A\in\RB^{m\times d}$  with $m>d$,  and $v_{i,*}$ denote the $i$-th row of $V$. Let $\ell_{i} = \|v_{i,*}\|_{F}^{2}/d$ and $s$ be an integer with $1\leq s\leq m$. Then the $\ell_{i}$'s are leverage scores for $A$. Given a distribution with $p_i \geq \beta \ell_i$ and $\sum_{1}^{m} p_i = 1$, where $0<\beta\leq 1$, we construct a sampling matrix  $\Omega\in\RB^{m\times s}$ and a rescaling matrix $D\in\RB^{s \times s}$ as follows. For every $j = 1,\dots,s$, independently and with replacement, pick an index $i$ from the set $\{1,2\dots,m\}$ with probability  $p_{i}$ and set $\Omega_{ij} = 1$ and $D_{jj}=1/\sqrt{p_{i}s}$. The random sampling sketching matrix $S$ for $A$ is then defined as $S = \Omega D$. To achieve an $\epsilon$-subspace embedding property for $A$, $s = \OM(d\log d/(\beta\epsilon^2))$ is needed. When we sample by the distribution with leverage sores, i.e. $\beta=1$, we just need $s = \OM(d\log d/\epsilon^2)$ samples. There are several methods to approximate the leverage score of $A$ which are of computational efficiency \cite{drineas2012fast,li2013iterative}. 
	
	{\bf Sparse embedding matrix:} Sparse embedding matrix $S\in\RB^{s\times m}$ is of the form that there is only one non-zero entry uniformly sampling from $\{1,-1\}$ in each column \cite{clarkson2013low}. Hence the it is very efficient to compute $SA$, especially when $A$ is a sparse matrix. To achieve an $\epsilon$-subspace embedding property for $A\in\RB^{m\times d}$, $s = \OM(d^2/\epsilon^2)$ is sufficient \cite{meng2013low,woodruff2014sketching}. 
	
	For random sampling, $\mu$-Coherence is an important concept which is closely relatd to leverage scores.
	\begin{definition}
		Let $V\in\RB^{m\times d}$  be column orthonormal basis for $A\in\RB^{m\times d}$  with $m>d$,  and $v_{i,*}$ denote the $i$-th row of $V$. Then the $\mu$-Coherence of $A$ is
		\[
		\mu(A) = \frac{n}{d} \max_i\|v_{i,*}\|^2_2
		\]
	\end{definition}
	
	Other types of sketching matrices like Subsampled Randomized Hadamard Transformation and detailed properties of sketching matrices and subspace embedding matrices can be found in the survey \cite{woodruff2014sketching}.
	
	\subsection{Computation cost of matrix operations}
	We will give the computation cost of basic matrix operations, the cost can be found in Matrix Computation \cite{golub2012matrix}. 
	
	For matrix multiplication, given dense matrices $B\in\RBmn$ and $C\in\RB^{n\times k}$, the basic cost of the matrix product $B\times C$ is $\OM(mnk)$ flops. It costs $\OM(k\cdot\nnz(B))$ flops for the matrix product $B\times C$ when $B$ is sparse, where $\nnz(B)$ denotes the number of nonzero entries of $B$. If $S$ is a sparse subspace embedding matrix \cite{woodruff2014sketching}, then the product $S\times B$ costs $\OM(\nnz(B))$. 
	
	For SVD and QR-decomposition of a  matrix $A\in\RB^{m\times n}$, it costs about $\OM(mn^2)$ flops if $n\leq m$ \cite{golub2012matrix}. To get the inverse of a positive-definite matrix $A\in\RB^{n\times n}$, it costs  $\OM(n^3)$ flops by Cholesky decomposition.
	
	\section{Inexact Newton}
	
	The basic Newton step is to calculate a direction vector $v^{(t)}_N$ by solving the following symmetric $p\times p$ linear system
	\begin{equation}
	\nabla^2 F(x^{(t)}) v^{(t)}_N = \nabla F(x^{(t)}). \label{eq:newtion_equa}
	\end{equation}
	An inexact Newton method tries to find an approximation $v^{(t)}$ to $v^{(t)}_N$. We define the residual with $v^{(t)}$ as follows
	\begin{equation}
	r^{(t)} = \nabla^2 F(x^{(t)})v^{(t)}-\nabla F(x^{(t)}). \label{eq:res_def}
	\end{equation}
	Usually, inexact Newton methods should satisfy the following condition
	\begin{equation}
	\|r^{(t)}\|\leq \gamma^{(t)}\|\nabla F(x^{(t)})\|, \label{eq: res_cond}
	\end{equation}
	where the sequence $\{\gamma^{(t)}\}$ (with $0<\gamma^{(t)}<1$ for all $t$) is called the forcing sequence.
	
	We give a new form of convergence properties of inexact Newton method. The convergence properties can be found in \cite{nocedal2006numerical,dembo1982inexact}.
	\begin{theorem} \label{thm:inexact_newton}
		Suppose that $\nabla^2 F(x)$ exists and is continuous in a neighborhood of a minimizer $x^*$, with $\nabla^2 F(x^*)$ being positive definite. Consider the iteration $x^{(t+1)} = x^{(t)} - v^{(t)}$ where $v^{(t)}$ satisfies \eqref{eq: res_cond}. If the starting point $x^{(0)}$ is sufficiently near $x^{*}$, then the sequence $\{x^{(t)}\}$ converges to $x^*$ and satisfies
		\begin{equation}
		\|\nabla^2F(x^{*})(x^{(t+1)} - x^{*})\| \leq (\gamma^{(t)} + 6M\eta)\|\nabla^2F(x^{*})(x^{(t)} - x^*) \|. \label{eq:conv_lin}
		\end{equation}
		where $M \equiv\max(\|\nabla^{2}F(x^*)\|,\|\nabla^{2}F(x^*)^{-1}\|)$ and $\eta = o(1)$ are constant. 
		
		Besides, if  $\nabla^{2}F(x)$ is Lipschitz continuous for $x$ near $x^{*}$, then sequence $\{x^{(t)}\}$ converges to $x^*$ and satisfies
		\begin{equation}
		\|\nabla^2F(x^{*})(x^{(t+1)} - x^{*})\|  \leq \gamma^{(t)}\|\nabla^2F(x^{*})(x^{(t)} - x^*) \|+ 14LM^2\|\nabla^2F(x^{*})(x^{(t)} - x^*) \|^2, \label{eq:Lip_conv}
		\end{equation}	
		where $L$ is the Lipschitz constant.
	\end{theorem}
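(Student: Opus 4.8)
The plan is to derive a single error recursion for $x^{(t+1)} - x^*$ and then estimate it directly in the weighted quantity $\|\nabla^2 F(x^*)(x^{(t)} - x^*)\|$. Write $H_t = \nabla^2 F(x^{(t)})$ and $H_* = \nabla^2 F(x^*)$ for brevity. From the residual definition $r^{(t)} = H_t v^{(t)} - \nabla F(x^{(t)})$ we get $v^{(t)} = H_t^{-1}(\nabla F(x^{(t)}) + r^{(t)})$, which is legitimate because $H_t$ is invertible for $x^{(t)}$ near $x^*$ by continuity and positive definiteness of $H_*$. Since $x^*$ is a minimizer, $\nabla F(x^*) = 0$, so the fundamental theorem of calculus gives $\nabla F(x^{(t)}) = \bar H (x^{(t)} - x^*)$ with $\bar H \equiv \int_0^1 \nabla^2 F(x^* + \tau(x^{(t)} - x^*))\, d\tau$. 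Substituting into $x^{(t+1)} - x^* = (x^{(t)} - x^*) - v^{(t)}$ and simplifying yields
\[
x^{(t+1)} - x^* = H_t^{-1}(H_t - \bar H)(x^{(t)} - x^*) - H_t^{-1} r^{(t)}.
\]

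Next I would left-multiply by $H_*$, take norms, and handle the two terms separately, converting every bare $\|x^{(t)} - x^*\|$ into the weighted quantity via $\|x^{(t)} - x^*\| \le M\|H_*(x^{(t)} - x^*)\|$. For the first term I insert $H_*^{-1} H_*$ to expose the weighted error, writing it as $\big(H_* H_t^{-1}(H_t - \bar H) H_*^{-1}\big) H_*(x^{(t)} - x^*)$, whose norm is at most $\|H_* H_t^{-1}\|\,\|H_t - \bar H\|\,\|H_*^{-1}\|\,\|H_*(x^{(t)} - x^*)\|$. For the second term I invoke the inexact condition $\|r^{(t)}\| \le \gamma^{(t)}\|\nabla F(x^{(t)})\|$ together with $\nabla F(x^{(t)}) = H_*(x^{(t)} - x^*) + (\bar H - H_*)(x^{(t)} - x^*)$, so that $\|\nabla F(x^{(t)})\| \le (1 + M\|\bar H - H_*\|)\|H_*(x^{(t)} - x^*)\|$; thus the second term contributes essentially $\gamma^{(t)}\|H_*(x^{(t)} - x^*)\|$ up to factors close to $1$.

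The split into the two claimed inequalities is governed entirely by how $\|H_t - \bar H\|$ and $\|\bar H - H_*\|$ are estimated. Writing $H_t - \bar H = \int_0^1[\nabla^2 F(x^{(t)}) - \nabla^2 F(x^* + \tau(x^{(t)} - x^*))]\, d\tau$, continuity of $\nabla^2 F$ alone gives $\|H_t - \bar H\| = o(1)$, a quantity we denote $\eta$, as $x^{(t)} \to x^*$; this produces the linear bound \eqref{eq:conv_lin} with coefficient $6M\eta$. Under the additional Lipschitz hypothesis the same integral is bounded by $\tfrac{L}{2}\|x^{(t)} - x^*\| \le \tfrac{L}{2}M\|H_*(x^{(t)} - x^*)\|$, turning the first term into a genuinely quadratic one and yielding \eqref{eq:Lip_conv} with coefficient $14LM^2$.

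The main obstacle is the bookkeeping of the absolute constants $6$ and $14$ and, in particular, arranging for the leading coefficient on the linear term to be exactly $\gamma^{(t)}$ rather than a multiple of it. The device for this is the splitting $H_* H_t^{-1} = I + (H_* - H_t)H_t^{-1}$: the identity part reproduces the clean $\gamma^{(t)}\|\nabla F(x^{(t)})\|$ contribution, while $\|(H_* - H_t)H_t^{-1}\|$ is $o(1)$ in the general case (or $O(L\|x^{(t)} - x^*\|)$ in the Lipschitz case) and is absorbed into the error term. Carrying this out requires uniform control of $\|H_t^{-1}\|$, $\|H_* H_t^{-1}\|$ and $\|\bar H - H_*\|$ on a small enough neighborhood of $x^*$ (for instance $\|H_t^{-1}\| \le 2M$, $\|H_* H_t^{-1}\| \le 2$, and $M\|\bar H - H_*\| \le 1$). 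A secondary point needing care is that each inequality is only a one-step bound; to conclude that $\{x^{(t)}\}$ actually converges one runs an induction showing the neighborhood is invariant — once $x^{(t)}$ is close enough and $\gamma^{(t)}$ is bounded away from $1$, the factor $\gamma^{(t)} + 6M\eta$ is strictly below $1$, so $\|H_*(x^{(t+1)} - x^*)\| < \|H_*(x^{(t)} - x^*)\|$ and the iterates stay in and contract within the neighborhood.
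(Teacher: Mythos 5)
Your proposal is correct, but it takes a genuinely different route from the paper's own proof. You work directly with the one-step error recursion
\[
x^{(t+1)} - x^* \;=\; H_t^{-1}\bigl(H_t - \bar H\bigr)\bigl(x^{(t)} - x^*\bigr) \;-\; H_t^{-1} r^{(t)},
\qquad \bar H \;=\; \int_0^1 \nabla^2 F\bigl(x^* + \tau (x^{(t)} - x^*)\bigr)\,d\tau ,
\]
left-multiply by $\nabla^2 F(x^*)$, and let the two hypotheses (mere continuity versus Lipschitz continuity of the Hessian) enter only through the single estimate on $\|H_t - \bar H\|$, namely $o(1)$ versus $\tfrac{L}{2}\|x^{(t)} - x^*\|$; the splitting $H_* H_t^{-1} = I + (H_* - H_t)H_t^{-1}$ is what keeps the leading coefficient exactly $\gamma^{(t)}$. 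The paper never writes this recursion: it first proves a contraction of gradient norms, bounding $\|v^{(t)}\| \le 2M\|\nabla F(x^{(t)})\|$ and using Taylor's theorem to obtain $\|\nabla F(x^{(t+1)})\| \le (\gamma^{(t)} + O(M\eta))\,\|\nabla F(x^{(t)})\|$, and only then transfers this to the weighted norm $\|y\|_* = \|\nabla^2 F(x^*) y\|$ via the estimate $\|\nabla F(x) - \nabla^2 F(x^*)(x - x^*)\| \le \eta\|x - x^*\|$ --- the classical Dembo--Eisenstat--Steihaug scheme. Your route is more self-contained and makes the constant bookkeeping transparent, since both regimes \eqref{eq:conv_lin} and \eqref{eq:Lip_conv} follow from one decomposition; the paper's route yields, as a usable by-product, the contraction of $\|\nabla F(x^{(t)})\|$ itself, a quantity that (unlike the true error) can actually be monitored during the iteration. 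Both arguments require, and both implicitly assume, that $\gamma^{(t)}$ stays bounded away from $1$ so that $\gamma^{(t)} + 6M\eta < 1$ eventually holds; your closing induction on invariance of the neighborhood makes this point explicit, which the paper glosses over.
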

	
	\section{Refined Sub-sampled Newton Methods}
	We propose ReSubNewton and ReSkeNewton which can both achieve superlinear and quadratic convergence rate without more and more sampling as iteration goes. The key advantage of our algorithms is to get accurate approximation to $[\nabla^{2}F(x^{(t)})]^{-1}\nabla F(x^{(t)})$ without any additional samples.
	
	\subsection{Algorithms Description}
	In ReSubNewton, we first select a sample set $\SM$ to construct $H^{(t)}$ such that
	\begin{equation}
	(1-\epsilon)H^{(t)}  \preceq \nabla^{2}F(x^{(t)}) \preceq (1+\epsilon) H^{(t)}. \label{eq:H_cond}
	\end{equation}
	Then, $p^{(t)} = [H^{(t)}]^{-1}\nabla F(x^{(t)}) $ is a good approximation to $[\nabla^{2}F(x^{(t)})]^{-1}\nabla F(x^{(t)})$. Furthermore, if $\|r^{(t)}\|$ is bigger than a prespecified tolerance $tol$, where $r^{(t)} = \nabla^2 F(x^{(t)})p^{(t)}-\nabla F(x^{(t)})$, we can refine $p^{(t)}$ as follows
	\begin{equation*}
	p^{(t)} = p^{(t)} + [H^{(t)}]^{-1}r^{(t)}.
	\end{equation*}
	After $k$ refine iterations above, $\|r^{(t)}\|$ will decrease to $\epsilon^{k}\|r^{(t)}_0\|$, where $r^{(t)}_0$ is the residual before refine iterations. If we choose $\epsilon$ properly, $\|r^{(t)}\|$ will decrease to $tol$ very fast. Finally, we update $x^{(t+1)}$ with refined $p^{(t)}$ just as
	\[
	x^{(t+1)} = x^{(t)} - p^{(t)}.
	\]
	The detailed algorithm is depicted in Algorithm~\ref{alg:H_subsamp_iter}.
	
	In machine learning, it is common that the Hessian matrix is of the form $B(x^{(t)})^TB(x^{(t)})$ and $B(x^{(t)})$ is an explicitly available $n\times p$, e.g., SVM, generalized linear models, etc.. Hence, we propose Refined Sketch Newton  (Algorithm~\ref{alg:Sketch_Newton_iter}) where refine iterations are used similar to ReSubNewton.  Different types of sketching matrices have the same algorithms structure of ReSkeNewton. But, the computational cost of ReSkeNewton will be different if different types of sketching matrices used in algorithm. The most popular two kinds of sketching matrices are leverage-score sketching matrix and sparse embedding matrix because they can achieve sketching in input sparsity.

	We give the properties of ReSubNewton and ReSkeNewton in the following theorems.
	\begin{theorem} \label{thm:sub_refine}
		Let Assumptions.~\eqref{eq:k} and Eqn.~\eqref{eq:sigma} hold, and  $0<\delta<1$	and $0<\epsilon<1/2$ be given. If the sample size is set as $|\mathcal{S}| \geq \frac{16K^2\log(2p/\delta)}{\sigma^2 \epsilon^2}$, then  Algorithm~\ref{alg:H_subsamp_iter} has the following convergence properties:
		\begin{enumerate}
			\item If $tol/\|\nabla F(x^{(t)})\|\to 0$, then the sequence $\{x^{(t)}\colon  t = 1,\dots,T\}$ converges superlinearly.
			\item If $tol = \OM(\|\nabla F(x^{(t)})\|^2)$ and $\nabla^{2}F(x^{(t)})$ is Lipschitz continuous, then the sequence $\{x^{(t)}\colon  t = 1,\dots,T\}$ converges quadratically.
		\end{enumerate}
		Besides, iterations of the inner loop of Algorithm~\ref{alg:H_subsamp_iter} are at most $\frac{\log \frac{K\|\nabla F(x^{(t)})\|}{\sigma tol}}{\log \frac{1}{\epsilon}}$.
	\end{theorem}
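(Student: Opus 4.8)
The plan is to recognize Algorithm~\ref{alg:H_subsamp_iter} as an inexact Newton scheme in the sense of~\eqref{eq: res_cond} and to feed its forcing sequence into Theorem~\ref{thm:inexact_newton}. At termination of the inner loop the returned direction $p^{(t)}$ has residual $r^{(t)}=\nabla^2F(x^{(t)})p^{(t)}-\nabla F(x^{(t)})$ with $\|r^{(t)}\|\le tol$, so the update $x^{(t+1)}=x^{(t)}-p^{(t)}$ is exactly an inexact Newton step with forcing term
\[
\gamma^{(t)}=\frac{\|r^{(t)}\|}{\|\nabla F(x^{(t)})\|}\le\frac{tol}{\|\nabla F(x^{(t)})\|}.
\]
Before invoking the theorem I would first verify that the sample size $|\SM|\ge 16K^2\log(2p/\delta)/(\sigma^2\epsilon^2)$ guarantees~\eqref{eq:H_cond} with probability at least $1-\delta$; this is a routine matrix-concentration (matrix Hoeffding/Bernstein) argument, using $\|\nabla^2 f_j\|\le K$ from~\eqref{eq:k} to bound each summand and $\lambda_{\min}(\nabla^2 F)\ge\sigma$ from~\eqref{eq:sigma} to convert an additive bound $\|H^{(t)}-\nabla^2F(x^{(t)})\|\le\epsilon\sigma$ into the multiplicative sandwich~\eqref{eq:H_cond}.

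The core computation is the contraction of the refinement loop. Writing one refinement step in terms of the residual gives the recursion $r_{k+1}=(I-\nabla^2F(x^{(t)})[H^{(t)}]^{-1})r_k$. Conjugating by $[H^{(t)}]^{1/2}$ shows that $\nabla^2F(x^{(t)})[H^{(t)}]^{-1}$ is similar to the symmetric matrix $\widetilde A=[H^{(t)}]^{-1/2}\nabla^2F(x^{(t)})[H^{(t)}]^{-1/2}$, whose eigenvalues lie in $[1-\epsilon,1+\epsilon]$ by~\eqref{eq:H_cond}; hence $\|I-\widetilde A\|\le\epsilon$ and the residual contracts by a factor $\epsilon$ per step in the $[H^{(t)}]^{-1}$-weighted norm, i.e. $\|r_k\|_{[H^{(t)}]^{-1}}\le\epsilon^k\|r_0\|_{[H^{(t)}]^{-1}}$. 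Converting back to the Euclidean norm and bounding the initial residual $r_0=(\nabla^2F(x^{(t)})[H^{(t)}]^{-1}-I)\nabla F(x^{(t)})$ introduces the condition number $\kappa(H^{(t)})$, which by~\eqref{eq:k},~\eqref{eq:sigma} and~\eqref{eq:H_cond} is at most $\OM(K/\sigma)$ (since $\|H^{(t)}\|\le K/(1-\epsilon)$ and $\lambda_{\min}(H^{(t)})\ge\sigma/(1+\epsilon)$). This yields $\|r_k\|\le\epsilon^{k+1}(K/\sigma)\|\nabla F(x^{(t)})\|$, and solving $\|r_k\|\le tol$ for $k$ gives the stated inner-iteration bound $\log\!\big(K\|\nabla F(x^{(t)})\|/(\sigma\,tol)\big)/\log(1/\epsilon)$.

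With the forcing term controlled, the two convergence statements follow from Theorem~\ref{thm:inexact_newton}. For part~1, the hypothesis $tol/\|\nabla F(x^{(t)})\|\to0$ forces $\gamma^{(t)}\to0$; substituting into~\eqref{eq:conv_lin} and using that $\eta=o(1)$ as $x^{(t)}\to x^*$ makes the factor $\gamma^{(t)}+6M\eta$ tend to zero, which is the definition of superlinear convergence. (To start the induction one first checks that for $x^{(0)}$ near $x^*$ the factor $\gamma^{(0)}+6M\eta<1$, so the iterates stay in the neighborhood and converge, inheriting local convergence from Theorem~\ref{thm:inexact_newton}.) For part~2, under the Lipschitz hypothesis and $tol=\OM(\|\nabla F(x^{(t)})\|^2)$ we get $\gamma^{(t)}=\OM(\|\nabla F(x^{(t)})\|)$; since $\nabla F(x^*)=0$, a Taylor expansion shows $\|\nabla F(x^{(t)})\|$ is comparable, up to constants depending on $M$ and $\sigma$, to the weighted error $\|\nabla^2F(x^*)(x^{(t)}-x^*)\|$, so the first term on the right of~\eqref{eq:Lip_conv} is itself $\OM(\|\nabla^2F(x^*)(x^{(t)}-x^*)\|^2)$. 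Combining it with the existing Lipschitz quadratic term gives $\|\nabla^2F(x^*)(x^{(t+1)}-x^*)\|\le C\|\nabla^2F(x^*)(x^{(t)}-x^*)\|^2$, i.e. quadratic convergence.

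The step I expect to be most delicate is the quadratic case: converting the linear term $\gamma^{(t)}\|\nabla^2F(x^*)(x^{(t)}-x^*)\|$ of~\eqref{eq:Lip_conv} into a genuinely quadratic one, which hinges on the equivalence between $\|\nabla F(x^{(t)})\|$ and the weighted error near $x^*$ and on tracking the constants through $M$ and $\sigma$. The inner-loop norm conversions are the other place demanding care, since a naive Euclidean estimate would lose a $\sqrt{\kappa}$ factor; the bookkeeping must be organized so that exactly $\kappa(H^{(t)})=\OM(K/\sigma)$, rather than its square root, appears inside the logarithm.
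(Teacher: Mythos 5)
Your proposal is correct and follows essentially the same route as the paper: matrix concentration gives the spectral sandwich \eqref{eq:H_cond}, the inner refinement loop contracts geometrically at rate $\epsilon$ with a $K/\sigma$ norm-conversion factor yielding the stated inner-iteration bound, and the forcing term $\gamma^{(t)} \le tol/\|\nabla F(x^{(t)})\|$ is then fed into the inexact-Newton convergence results (the paper routes this through Theorem~\ref{thm:univ_frm}, which is itself derived from Theorem~\ref{thm:inexact_newton}, so the substance is identical). The only cosmetic difference is that you establish the contraction directly on the residual in the $[H^{(t)}]^{-1}$-weighted norm via conjugation by $[H^{(t)}]^{1/2}$, whereas the paper invokes Lemma~\ref{lem:refine} to contract the error $p^{(t)}-[\nabla^2F(x^{(t)})]^{-1}\nabla F(x^{(t)})$ in the $\nabla^2F(x^{(t)})$-norm and then converts norms --- two equivalent phrasings of the same preconditioned-iteration argument.
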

	
	\begin{theorem}\label{thm:sketch_refine}
		Let Assumptions~\eqref{eq:k} and Eqn.~\eqref{eq:sigma} hold, and  $0<\delta<1$	and $0<\epsilon<1/2$ be given. If $S$ is an $\epsilon$-subspace embedding matrix for $B(x^{(t)})$, then Algorithm~\ref{alg:Sketch_Newton_iter} has the following convergence properties:
		\begin{enumerate}
			\item If $tol/\|\nabla F(x^{(t)})\|\to 0$, then the sequence $\{x^{(t)}\colon t = 1,\dots,T\}$ converges superlinearly.
			\item If $tol = \OM(\|\nabla F(x^{(t)})\|^2)$ and $\nabla^{2}F(x^{(t)})$ is Lipschitz continuous, then the sequence $\{x^{(t)}\colon t = 1, \dots, T\}$ converges quadratically.
		\end{enumerate}
		Besides, iterations of the inner loop are at most $\frac{\log \frac{K\|\nabla F(x^{(t)})\|}{\sigma tol}}{\log \frac{1}{\epsilon}}$.
	\end{theorem}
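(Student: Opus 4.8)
The plan is to follow the same route as the proof of Theorem~\ref{thm:sub_refine}, replacing the matrix-concentration guarantee for the sub-sampled Hessian by the spectral equivalence supplied by the subspace embedding. Write $A \equiv \nabla^2 F(x^{(t)}) = [B(x^{(t)})]^T B(x^{(t)})$ and $H \equiv H^{(t)} = [B(x^{(t)})]^T S^T S B(x^{(t)})$. First I would translate Definition~\ref{lem:sub-embed}: since $\|SB(x^{(t)})y\|_2^2 = (1\pm\epsilon)\|B(x^{(t)})y\|_2^2$ for all $y$, passing to quadratic forms gives $(1-\epsilon)A \preceq H \preceq (1+\epsilon)A$, and inverting and conjugating by $A^{1/2}$ yields $\frac{1}{1+\epsilon}I \preceq A^{1/2}H^{-1}A^{1/2}\preceq \frac{1}{1-\epsilon}I$. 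This is the sketch analogue of \eqref{eq:H_cond} and is the only place the embedding enters the argument.

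Second, I would analyze the inner \textbf{while} loop as a preconditioned Richardson iteration for the Newton system $A v = \nabla F(x^{(t)})$ with preconditioner $H^{-1}$. Writing $r^{(t)}_k$ for the residual $A p^{(t)}_k - \nabla F(x^{(t)})$ after $k$ refinements, the preconditioned correction $p \leftarrow p - H^{-1}r$ produces the recursion $r^{(t)}_{k+1} = (I - A H^{-1}) r^{(t)}_k$, equivalently $A^{-1/2} r^{(t)}_{k+1} = T\,(A^{-1/2} r^{(t)}_k)$ with $T = I - A^{1/2} H^{-1} A^{1/2}$. By the first step $T$ is symmetric with eigenvalues in $[-\epsilon/(1-\epsilon),\,\epsilon/(1+\epsilon)]$, so $\|T\| \le \epsilon/(1-\epsilon) < 1$ precisely because $\epsilon < 1/2$. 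Hence the residual contracts geometrically in the $A^{-1/2}$-weighted norm; converting back to the Euclidean norm via $\|A\|\le K$ (from \eqref{eq:k}) and $\lambda_{\min}(A)\ge\sigma$ (from \eqref{eq:sigma}) bounds $\|r^{(t)}_k\|$ by a multiple of $(\epsilon/(1-\epsilon))^{k}\,\|\nabla F(x^{(t)})\|$ with a prefactor controlled by $\kappa(A)\le K/\sigma$. Solving $\|r^{(t)}_k\|\le tol$ for $k$ then delivers the advertised inner-iteration count, the constants being absorbed into $\log(1/\epsilon)$.

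Third, observe that on exit the step $p^{(t)}$ satisfies $\|r^{(t)}\| = \|\nabla^2 F(x^{(t)}) p^{(t)} - \nabla F(x^{(t)})\| \le tol$, so $x^{(t+1)} = x^{(t)} - p^{(t)}$ is exactly an inexact Newton step obeying \eqref{eq: res_cond} with forcing term $\gamma^{(t)} = tol/\|\nabla F(x^{(t)})\|$. I would then invoke Theorem~\ref{thm:inexact_newton}. For part~1, the hypothesis $tol/\|\nabla F(x^{(t)})\|\to 0$ forces $\gamma^{(t)}\to 0$, and since $\eta = o(1)$ as $x^{(t)}\to x^*$, the ratio in \eqref{eq:conv_lin} tends to $0$, which is the definition of superlinear convergence. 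For part~2, Lipschitz continuity of the Hessian lets me use \eqref{eq:Lip_conv}; the assumption $tol = \OM(\|\nabla F(x^{(t)})\|^2)$ gives $\gamma^{(t)} = \OM(\|\nabla F(x^{(t)})\|)$, and near $x^*$ the quantity $\|\nabla F(x^{(t)})\|$ is comparable to $\|\nabla^2 F(x^*)(x^{(t)}-x^*)\|$, so the first term of \eqref{eq:Lip_conv} is itself quadratic in $\|\nabla^2 F(x^*)(x^{(t)}-x^*)\|$ and combines with the second to yield quadratic convergence.

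The main obstacle is the inner-loop bookkeeping: the contraction is clean only in the $A$-weighted norm, whereas both the stopping test and the forcing sequence are phrased in the Euclidean norm, so one must carry the factors $\|A\|\le K$ and $\lambda_{\min}(A)\ge\sigma$ through the change of norm to land on the stated count $\frac{\log(K\|\nabla F(x^{(t)})\|/(\sigma\, tol))}{\log(1/\epsilon)}$. The crux enabling this is showing that the symmetrized preconditioned operator $T$ has spectral radius strictly below one, which is exactly where $\epsilon<1/2$ is used and which simultaneously secures the geometric inner convergence and a usable bound on the forcing term. Everything downstream is a verbatim application of Theorem~\ref{thm:inexact_newton}, mirroring the proof of Theorem~\ref{thm:sub_refine}.
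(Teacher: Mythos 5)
Your proposal is correct and follows essentially the same route as the paper: the embedding gives the spectral sandwich between $H^{(t)}$ and $\nabla^2F(x^{(t)})$, the inner loop contracts geometrically in the $\nabla^2F(x^{(t)})$-weighted norm, the count follows by converting norms via $K$ and $\sigma$, and the outer convergence is the inexact-Newton argument with forcing term $tol/\|\nabla F(x^{(t)})\|$. The only cosmetic differences are that you re-derive the contraction inline via the symmetrized Richardson operator $T = I - A^{1/2}[H^{(t)}]^{-1}A^{1/2}$ (getting rate $\epsilon/(1-\epsilon)$, which matches the paper's rate $\epsilon$ from Lemma~\ref{lem:refine} up to exactly the first-order-in-$\epsilon$ bookkeeping the paper itself performs when it ``omits $\epsilon^2$'' to flip the sandwich), and that you invoke Theorem~\ref{thm:inexact_newton} directly where the paper routes through Theorem~\ref{thm:univ_frm}, whose relevant items are themselves proved from \eqref{eq:conv_lin} and \eqref{eq:Lip_conv}.
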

	
	When $\nabla^2F(x^{(t)}) =\frac{1}{n}\sum_{i=1}^{n}\nabla^{2}f_i(x^{(t)}) =\frac{1}{n}\sum_{i=1}^{n}B(i,:)^TB(i,:) = B^TB$, where $\nabla^{2}f_i(x^{(t)}) = B(i,:)^TB(i,:)$ and $B\in\RB^{n\times p}$, ReSubNewton can be viewed as a special case of ReSkeNewton. And random sampling sketching matrix can be constructed just as described in Subsection~\ref{subsec:ske_mat}. 
	\begin{corollary} \label{cor:rand_samp}
		Let Assumptions.~\eqref{eq:k} and Eqn.~\eqref{eq:sigma} hold, and  $0<\delta<1$	and $0<\epsilon<1/2$ be given. And Hessian matrix $\nabla^2F(x^{(t)})$ is of the form $\frac{1}{n}B^T B$ and $\nabla^{2}f_i(x^{(t)}) = B(i,:)^TB(i,:)$, where $B \in \RB^{n\times p}$. If the sample size is set as $|\mathcal{S}| \geq \OM(\frac{\mu(B(x^{(t)}))p\log(p/\delta)}{\epsilon^2})$, then  Algorithm~\ref{alg:H_subsamp_iter} has the same convergence properties described in Theorem~\ref{thm:sub_refine}.
	\end{corollary}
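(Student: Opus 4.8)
The statement is a direct corollary of Theorem~\ref{thm:sub_refine}, so the plan is to verify that, in this special case, the coherence-based sample size still guarantees the spectral approximation \eqref{eq:H_cond} that underlies Theorem~\ref{thm:sub_refine}. The only genuinely new ingredient is a sharper sample-complexity bound obtained by exploiting the rank-one structure $\nabla^2 f_i(x^{(t)}) = B(i,:)^T B(i,:)$; everything else is inherited.

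First I would reduce the sub-sampling of functions to a random row-sampling of $B$. Writing $b_i = B(i,:)^T$, each summand $\nabla^2 f_i = b_i b_i^T$ is a rank-one outer product, so selecting $\SM$ uniformly and forming $H^{(t)} = \frac{1}{|\SM|}\sum_{j\in\SM} b_j b_j^T$ coincides with the random sampling sketching matrix $S\in\RB^{s\times n}$ of Subsection~\ref{subsec:ske_mat} applied to $B$ with uniform probabilities $p_i = 1/n$ and rescaling $D_{jj} = \sqrt{n/|\SM|}$. A short computation gives $B^T S^T S B = \sum_{j\in\SM}\frac{1}{p_{i_j}|\SM|}\,b_{i_j}b_{i_j}^T = n\,H^{(t)}$, while $\nabla^2 F(x^{(t)}) = \frac{1}{n}B^T B$; thus $H^{(t)} = \frac{1}{n}B^T S^T S B$, and the factor $\frac{1}{|\SM|}$ in the algorithm exactly absorbs the rescaling, so no separate normalization is needed.

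Next I would use the coherence to pin down the distortion parameter $\beta$. With $V$ the orthonormal basis of the column space of $B$, the leverage scores are $\ell_i = \|v_{i,*}\|_2^2/p$, and by definition of $\mu(B)$ we have $\max_i\|v_{i,*}\|_2^2 = \frac{p}{n}\mu(B)$, hence $\max_i\ell_i = \mu(B)/n$. Therefore the uniform distribution satisfies $p_i = 1/n \geq \beta\ell_i$ for all $i$ precisely when $\beta \leq 1/\mu(B)$. Substituting $\beta = 1/\mu(B)$ into the random-sampling embedding guarantee of Subsection~\ref{subsec:ske_mat}, in its high-probability form $s = \OM\!\big(p\log(p/\delta)/(\beta\epsilon^2)\big)$, yields exactly $|\SM| \geq \OM\!\big(\mu(B)\,p\log(p/\delta)/\epsilon^2\big)$. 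So with this sample size $S$ is an $\epsilon$-subspace embedding for $B$ with probability at least $1-\delta$.

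Finally I would translate the embedding into \eqref{eq:H_cond} and invoke Theorem~\ref{thm:sub_refine}. The embedding property gives $\|SBx\|_2^2 = (1\pm\epsilon)\|Bx\|_2^2$ for all $x$, i.e. $(1-\epsilon)B^T B \preceq B^T S^T S B \preceq (1+\epsilon)B^T B$; dividing by $n$ yields $(1-\epsilon)\nabla^2 F(x^{(t)}) \preceq H^{(t)} \preceq (1+\epsilon)\nabla^2 F(x^{(t)})$, which for $\epsilon<1/2$ is equivalent to \eqref{eq:H_cond} after a harmless rescaling of $\epsilon$. Since \eqref{eq:H_cond} is exactly the hypothesis driving Theorem~\ref{thm:sub_refine}, all of its conclusions carry over verbatim. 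The main care-point is the bookkeeping in the reduction of the second step, namely matching the unrescaled uniform average $\frac{1}{|\SM|}\sum$ of the algorithm to the rescaled sketch $B^T S^T S B$, together with the $\epsilon$-reparametrization between the two symmetric forms of the spectral sandwich; both are routine but must be handled explicitly.
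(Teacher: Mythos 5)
Your proposal is correct and follows essentially the same route as the paper's proof: identifying uniform sub-sampling of the rank-one terms $B(i,:)^TB(i,:)$ with the random sampling sketching matrix applied to $B$ (so that $H^{(t)} = \frac{1}{n}B^TS^TSB$), using the coherence bound $\max_i \ell_i = \mu(B)/n$ to set $\beta = 1/\mu(B)$ in the sampling guarantee, and then converting the resulting subspace embedding into the spectral sandwich \eqref{eq:H_cond} so that Theorem~\ref{thm:sub_refine} applies. If anything, you are slightly more careful than the paper about the direction of the sandwich and the harmless $\epsilon$-reparametrization, which the paper glosses over.
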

	
	In fact, our algorithms can be recast as preconditioned Newton-CG using sub-sampled Hessian as preconditioner just as described in Algorithm~\ref{alg:preconditioned Newton-CG}. In Algorithm~\ref{alg:preconditioned Newton-CG}, we use $H^{(t)}$ as preconditioner which satisfies Equation~\eqref{eq:H_cond}. The convergence properties of Algorithm~\ref{alg:preconditioned Newton-CG} is the same to Algorithm~\ref{alg:H_subsamp_iter} and~\ref{alg:Sketch_Newton_iter}.
	\subsection{Algorithm Analysis}
	We conduct comparison between ReSubNewton and ReSkeNewton. First, they have different application scenarios. ReSubNewton is suitable for problems of the form~\eqref{eq:prob_desc} and ReSkeNewton applies to the problem where Hessian is of the form $[B(x^{(t)})]^T B(x^{(t)})$ and $B(x^{(t)})$ ($n\times p$) is explicitly available. Furthermore, ReSkeNewton has a better theoretical property; that is, to achieve \eqref{eq:H_cond}, the sampled size $|\SM|$ of ReSubNewton depends on $K^2/\sigma^{2}$ linearly, where $K/\sigma$ is commonly referred to as the condition number, and sketched dimension $\ell$ of ReSkeNewton is independent on $K^2/\sigma^{2}$, i.e., condition number independent. Hence, the sketched dimension can be small even Hessian matrix is ill-conditioned which is very attractive in practice.
	
	In our algorithms, $p^{(t)}$ is refined to approximate $[\nabla^{2}F(x^{(t)})]^{-1}\nabla F(x^{(t)})$ more accurately within the inner loop. The main calculation cost of the inner loop is matrix multiplications and $\nabla^2 F(x^{(t)})p^{(t)}$. Note that $\nabla^2 F(x^{(t)})p^{(t)}$ can be calculated cheaply without explicit Hessian by a `Hessian-free' technique \cite{byrd2011use,nocedal2006numerical}. Especially, in many machine learning problems the loss functions $f_i$ take the following linearly-parameterized form: $f_i(x) = \ell(b_i,a_i^{T}x)$, where $b_i$ is the label of the $i$-th input data $a_i$, thus $\nabla^{2}F(x^{(t)})p^{(t)}$ can be computed in input sparsity of the data matrix which is very efficient when the data matrix is sparse. Besides, iterations of the inner loop are at most $\log \frac{K\|\nabla F(x^{(t)})\|}{\sigma tol}/\log \frac{1}{\epsilon}$ by Theorems~\ref{thm:sub_refine} and \ref{thm:sketch_refine}, hence, the number of iterations of the inner loop is small if we choose a moderate small $\epsilon \in (0,1)$. 
	
	What's more, our algorithms are very robust because $\|r^{(t)}\| \leq tol$ is satisfied in each iteration. We can extend our method to other sub-sampled Newton methods, the failure probability of algorithms can be reduced to $0$ by checking the $\|r^{(t)}\|$ defined in \eqref{eq:res_def} with little additional cost.
	
	\subsection{Application Range}
	ReSubNewton and ReSkeNewton both need to compute the inverse of sub-sampled or sketched Hessian which cost $\OM(p^3)$ arithmetic operations by Cholesky decomposition in general case which is similar to SubNewton and SkeNewton. Hence, ReSubNewton and ReSkeNewton are more suitable for the cases where $p$ is moderate or small. 
	
	In fact, our algorithms still have advantages in real applications even when $p$ is large. Without loss of generality, we conduct comparison between ReSkeNewton with other existing algorithms and we assume that $\nabla^{2}F(x^{(t)}) = [B(x^{(t)})]^T B(x^{(t)})$. Similar result also holds for ReSubNewton.  When $p$ is large, the \citet{byrd2011use} proposed to use sub-sampled Newton method with conjugate gradient to approximate $[H^{(t)}]^{-1}\nabla F(x^{(t)})$. The convergence rate of conjugate gradient linearly depends on $\sqrt{\kappa(H^{(t)})}$ i.e. $\kappa(\tilde{B}^{(t)})$ because $H^{(t)} = [\tilde{B}(x^{(t)})]^T \tilde{B}(x^{(t)})$. It costs $\OM(|\mathcal{S}|p\kappa(\tilde{B}^{(t)}))$ to approximate $[H^{(t)}]^{-1}\nabla F(x^{(t)})$, when $\tilde{B}^{(t)}$ is a dense matrix. As to our algorithm, it requires $\OM(|\mathcal{S}|p^2)$ to compute $H^{(t)}$ and $\OM(np+p^3)$ to compute the inner loop. Therefore, our algorithm has comparable or better performance when $\kappa(\tilde{B}^{(t)})$ is large. The running time of LiSSA~\cite{agarwal2016second} also depends on condition number. Hence, even when $p$ is large, our algorithms are competitive because $\tilde{B}^{(t)}$ is commonly ill-conditioned in machine learning application. 
	
	When $B^{(t)}$ is sparse which is also common in machine learning problems, we sketch $B(x^{(t)})$ to get $\tilde{B}(x^{(t)})$ using leverage-score sampling. Then we conduct QR decomposition to get $\tilde{B}(x^{(t)}) = UR$ with Givens operations in a proper order. This decomposition is fast and $R$ is a sparse matrix because $\tilde{B}(x^{(t)})$ is a sparse matrix. For similar reason, $R^{-1}$ is sparse and can be computed efficiently. Besides, in the inner loop $H(x^{(t)})^{-1}r^{(t)}$ can be computed efficiently in the manner $H(x^{(t)})^{-1}r^{(t)} = ({R^{-T}}(R^{-1}r^{(t)}))$. Combining Hessian-free technique, the inner loop is very efficient when $B(x^{(t)})$ is sparse. 
	
	Hence, our algorithms have good performance in real machine learning problem. 
	
	\subsection{Comparison with previous work}
	Next, we compare our main algorithms with other main variants of sub-sampled Newton methods.
	
	First, our algorithms are both condition number independent when Hessian matrix is of the form $B^{T}B$. The previous variants of sub-sampled Newton methods are all condition number dependent. Hence, our algorithms need much less samples than previous samples which means faster speed.
	
	Before our work, several sub-sampled Newton methods with superlinear convergence rate have been proposed. \citet{roosta2016sub}  and \citet{pilanci2015newton} proposed to reduce the value of $\epsilon$ in NaSubNewton and SkeNewton to $0$ as iteration goes. Though these methods can achieve superlinear convergence rate, the sub-sampled size $|\SM^{(t)}|$ or the sketched dimension $\ell$ will go beyond $n$ which will become the exact Newton method and lose computational efficiency. Furthermore, for $\epsilon = \frac{1}{\log(1+t)}$ suggested in \cite{pilanci2015newton}, it will not accelerate convergence much in real applications though it converges superlinearly. Thus, our algorithms are the first practical sub-sample Newton method with superlinear convergence rate. 
	
	Finally, our algorithms can achieve quadratic convergence rate when  $\nabla^{2}F(x^{(t)})$ is Lipschitz continuous. It is the first time that variant of sub-sampled Newton achieve quadratic convergence rate.
	
	We summarize our comparisons in Table~\ref{tb:cmp}.
	\begin{table}[]
		\centering
		\caption{Comparisons between variants of sub-sampled Newton($\kappa$ is conditon number)}
		\label{tb:cmp}
		\begin{tabular}{|c|c|c|c|}
			\hline
			& $\kappa$-independent & \begin{tabular}[c]{@{}c@{}}superlinear without \\ additional sampling\end{tabular} & \begin{tabular}[c]{@{}c@{}}achieve quadratic \\ convergence\end{tabular} \\ \hline
			SubNewton\cite{byrd2011use,roosta2016sub}   & No                   & No                                                                                & No                                                                       \\ \hline
			SkeNewton\cite{pilanci2015newton}   & No                   & No                                                                                & No                                                                       \\ \hline
			Newsamp\cite{erdogdu2015convergence}     & Ye                   & No                                                                                & No                                                                       \\ \hline
			LiSSA\cite{agarwal2016second}       & No                   & No                                                                                & No                                                                       \\ \hline
			ReSubNewton & Yes, when $H=B^{T}B$   & Yes                                                                               & Yes                                                                      \\ \hline
			ReSkeNewton & Yes                  & Yes                                                                               & Yes                                                                      \\ \hline
		\end{tabular}
	\end{table}
	\section{Beyond Refined Sub-sampled Newton Methods} \label{sec:frame}
	In this section, we bring in the perspective of inexact Newton to analyze variants of sub-sampled Newton Methods and propose a unifying framework.
	
	\subsection{Unifying Framework}
	
	The existing variants of sub-sampled Newton methods have close relationship. For example, if we let $tol$ be big enough, ReSubNewton and ReSkeNewton will reduce to SubNewton and SkeNewton, respectively. In fact, NewSamp \cite{erdogdu2015convergence}, LiSSA \cite{agarwal2016second}, sub-sampled Newton with conjugate gradient \cite{byrd2011use} and sub-sampled Newton with non-uniformly sampling \cite{xu2016sub}, they all can be cast into inexact Newton.  We give Theorem~\ref{thm:univ_frm} which is a framework of analyzing convergence properties of variants of sub-sampled Newton methods and the basis of designing new sub-sampled Newton type algorithms.
	
	\begin{theorem}\label{thm:univ_frm}
		Suppose that $\nabla^2 F(x)$ exists and is continuous in a neighborhood of a minimizer $x^*$, with $\nabla^2 F(x^*)$ being positive definite. Assuming that $H^{(t)}$ is the sub-sampled Hessian and $v^{(t)} = [H^{(t)}]^{-1}\nabla F(x^{(t)})$. Then $r^{(t)}$ defined in \eqref{eq:res_def} satisfies the following property:
		\begin{align*}
		&\|r^{(t)}\| \leq \gamma^{(t)}\|\nabla F(x^{(t)})\|, \\
		& \mbox{where } \;  \gamma^{(t)} \triangleq \|(\nabla^2F(x^{(t)}) - H^{(t)}) [H^{(t)}]^{-1}\|.
		\end{align*}
		For sub-sampled newton update $x^{(t+1)} = x^{(t)} - v^{(t)}$, we have the following convergence properties:
		\begin{enumerate}
			\item If $0<\gamma^{(t)} < 1$ for all $t$, then the sequence $\{x^{(t)}\}$ converges to optimal $x^{*}$ linearly with rate $\gamma^{(t)}<\gamma<1$.
			\item If $0<\gamma^{(t)} < 1$ for all $t$ and $\gamma^{(t)} \to 0$, then the sequence $\{x^{(t)}\}$ converges to optimal $x^{*}$ superlinearly.
			\item If $0<\gamma^{(t)} < 1$ for all $t$, and $\nabla^{2}F(x)$ is Lipschitz continuous for $x$ near $x^{*}$, the sequence $\{x^{(t)}\}$ has linear-quadratic convergence rate
			\[
			\|\nabla^2F(x^{*})(x^{(t+1)} - x^{*})\|  \leq \gamma^{(t)}\|\nabla^2F(x^{*})(x^{(t)} - x^*) \|+ 14LM^2\|\nabla^2F(x^{*})(x^{(t)} - x^*) \|^2,
			\] 
			where $M$ is a constant and $L$ is the Lipschitz constant. Hence, sequence $x^{(t)}$ starts with a quadratic rate of convergence which will transform to linear rate later to optimal $x^{*}$.
			\item If $0<\gamma^{(t)} < 1$ for all $t$,  $\nabla^{2}F(x)$ is Lipschitz continuous for $x$ near $x^{*}$ and  $\gamma^{(t)} = \OM(\|\nabla F(x^{t})\|)$, then the sequence $\{x^{(t)}\}$ converges to optimal $x^{*}$ quadratically.
		\end{enumerate}
	\end{theorem}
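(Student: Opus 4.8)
The plan is to prove the theorem in two stages: first establish the residual identity that yields the stated bound on $\|r^{(t)}\|$, and then recognize the sub-sampled update as an inexact Newton iteration so that Theorem~\ref{thm:inexact_newton} can be invoked to read off each of the four convergence regimes.

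First I would derive the residual bound by direct substitution. Plugging $v^{(t)} = [H^{(t)}]^{-1}\nabla F(x^{(t)})$ into \eqref{eq:res_def} and inserting the identity $H^{(t)}[H^{(t)}]^{-1} = I$ gives the factorization
\begin{equation*}
r^{(t)} = \left(\nabla^2 F(x^{(t)}) - H^{(t)}\right)[H^{(t)}]^{-1}\nabla F(x^{(t)}).
\end{equation*}
Taking norms and applying submultiplicativity yields $\|r^{(t)}\| \leq \gamma^{(t)}\|\nabla F(x^{(t)})\|$ with $\gamma^{(t)} = \|(\nabla^2 F(x^{(t)}) - H^{(t)})[H^{(t)}]^{-1}\|$. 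This is a routine computation rather than the crux.

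The key structural observation is that this bound is precisely the inexact Newton forcing condition \eqref{eq: res_cond} with forcing term $\gamma^{(t)}$. Consequently the update $x^{(t+1)} = x^{(t)} - v^{(t)}$ is an instance of the inexact Newton iteration, and Theorem~\ref{thm:inexact_newton} applies with its two conclusions \eqref{eq:conv_lin} and \eqref{eq:Lip_conv}. Cases~1 and~2 then follow directly from \eqref{eq:conv_lin}: the per-step ratio is bounded by $\gamma^{(t)} + 6M\eta$ with $\eta = o(1)$, so if $\gamma^{(t)} < \gamma < 1$ uniformly the $\limsup$ of the ratio is at most $\gamma < 1$ (linear), while if additionally $\gamma^{(t)} \to 0$ the $\limsup$ is $0$ (superlinear). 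Case~3 is just a restatement of \eqref{eq:Lip_conv} under Lipschitz continuity.

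The main obstacle is Case~4, where I must upgrade the linear-quadratic bound \eqref{eq:Lip_conv} to a pure quadratic rate. The idea is to show that the assumption $\gamma^{(t)} = \OM(\|\nabla F(x^{(t)})\|)$ makes the linear term $\gamma^{(t)}\|\nabla^2F(x^{*})(x^{(t)} - x^*)\|$ itself of quadratic order. To do this I would Taylor-expand $\nabla F$ about $x^*$; since $\nabla F(x^*) = 0$ and $\nabla^2 F(x^*)$ is positive definite, one obtains $\|\nabla F(x^{(t)})\| = \OM(\|x^{(t)} - x^*\|)$, and the positive-definiteness of $\nabla^2 F(x^*)$ lets me convert $\|x^{(t)} - x^*\|$ into a constant multiple of $\|\nabla^2 F(x^*)(x^{(t)} - x^*)\|$. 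Hence $\gamma^{(t)}\|\nabla^2F(x^{*})(x^{(t)} - x^*)\| = \OM(\|\nabla^2F(x^{*})(x^{(t)} - x^*)\|^2)$, which is absorbed into the existing quadratic term and yields $\limsup_{t\to\infty} \|\nabla^2F(x^{*})(x^{(t+1)} - x^{*})\| / \|\nabla^2F(x^{*})(x^{(t)} - x^{*})\|^2 < \infty$. The care needed here is in controlling the hidden constants uniformly in a neighborhood of $x^*$, so that all the $\OM$-bounds hold simultaneously once $x^{(0)}$ is taken sufficiently close to $x^*$.
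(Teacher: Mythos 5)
Your proposal is correct and follows essentially the same route as the paper: derive the factorization $r^{(t)} = (\nabla^2 F(x^{(t)}) - H^{(t)})[H^{(t)}]^{-1}\nabla F(x^{(t)})$ (your direct insertion of $I = H^{(t)}[H^{(t)}]^{-1}$ is a cleaner way to reach the same bound than the paper's difference-of-inverses identity), then treat the update as an inexact Newton step and read off all four cases from Theorem~\ref{thm:inexact_newton} via \eqref{eq:conv_lin} and \eqref{eq:Lip_conv}. Your Taylor-expansion argument for Case~4 is exactly the justification the paper leaves implicit when it asserts $\|x^{(t+1)}-x^*\|_*/\|x^{(t)}-x^*\|_*^2 = \OM(1)$.
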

	
	From Theorem~\ref{thm:univ_frm}, we can find something important insights. 
	
	First, Lipschitz continuity of $\nabla^{2}F(x)$ is not necessary for linear convergence and superlinear convergence rate of sub-sampled Newton methods. This reveals the reason for NewSamp can be used in training SVM where Lipschitz continuity is not satisfied. 
	Lipschitz continuity condition is only needed to get a quadratic convergence or linear-quadratic convergence. This explains the phenomena that LiSSA\cite{agarwal2016second}, NewSamp~\cite{erdogdu2015convergence}, sub-sampled Newton with non-uniformly sampling \cite{xu2016sub}, Sketched Newton \cite{pilanci2015newton} etc. all have linear-quadratic convergence rate because they all assume that Hessian is Lipschitz continuous. 
	
	Second, Theorem~\ref{thm:univ_frm} provide sufficient conditions to get superlinear and quadratic convergence rate, the sufficient condition relies on $\gamma^{(t)}$. Hence, any method which  decreases $\gamma^{(t)}$ can achieve the better convergence property. 
	
	Note that the convergence properties of ReSubNewton and ReSkeNewton can be derived from Theorem~\ref{thm:univ_frm} easily, and the detailed proof is in Appendix~\ref{sec:proof_resubnewton}. In the following subsections, we analyze several important variants of sub-sampled Newton methods.
	
	\subsection{Regularized Subsampled Newton}
	When the Hessian matrix $\nabla^2F(x^{(t)})$ is ill-conditioned, we need to a lot of samples to guarantee sub-sampled Newton methods work since the sample size is dependent on condition number. Similarly, sketched Newton methods have to sketch to a higher dimension. To reduce the influence of condition number, regularized sub-sampled Newton methods are proposed. The main two algorithms are depicted in Algorithm~\ref{alg:NewSamp} and~Algorithm~\ref{alg:reg_subsamp}.  
	\begin{theorem} \label{thm:NewSamp}
		Assumption~\eqref{eq:k} holds, and $\lambda_{i}^{(t)}$ is the $i$-th eigenvalue of $H_{|\SM^{(t)}|}$ defined in Algorithm~\ref{alg:NewSamp}. Then $\gamma^{(t)}$ defined in Theorem~\ref{thm:univ_frm} has the following bound:
		\[
		\gamma^{(t)} \leq 1-\eta^{(t)}\frac{\lambda_p^{(t)}}{\lambda^{(t)}_{r+1}}+ \eta^{(t)}\frac{4K}{\lambda_{r+1}^{(t)}}\sqrt{\frac{\log (2p/\delta)}{|\SM^{(t)}|}}  \triangleq \xi^{(t)}.
		\]
		If $\xi^{(t)} < 1$  (by choosing $\eta^{(t)}$ and $|\SM^{(t)}|$ properly), then  Algorithm~\ref{alg:NewSamp} has the following convergence properties:
		\begin{enumerate}
			\item The sequence $\{x^{(t)} \colon  t = 1, \dots, T\}$ converges linearly with probability $(1-\delta)^{T}$.
			\item If $\nabla^{2}F(x^{(t)})$ is Lipschitz continuous, then the  sequence $\{x^{(t)}\colon t = 1, \dots, T\}$ has a linear-quadratic convergence rate  with probability $(1-\delta)^{T}$.
		\end{enumerate}
	\end{theorem}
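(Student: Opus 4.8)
The plan is to recognize NewSamp (Algorithm~\ref{alg:NewSamp}) as a special instance of the inexact Newton framework of Theorem~\ref{thm:univ_frm} and then to estimate the associated forcing term $\gamma^{(t)}$. NewSamp updates $x^{(t+1)} = x^{(t)} - \eta^{(t)} Q_t \nabla F(x^{(t)})$, where $Q_t$ is the truncated-spectrum preconditioner built from the eigendecomposition $H_{|\SM^{(t)}|} = \sum_{i=1}^{p}\lambda_i^{(t)} u_i u_i^{T}$, namely $Q_t = \sum_{i=1}^{r}\frac{1}{\lambda_i^{(t)}}u_i u_i^{T} + \frac{1}{\lambda_{r+1}^{(t)}}\sum_{i=r+1}^{p}u_i u_i^{T}$. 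To match the update $v^{(t)} = [H^{(t)}]^{-1}\nabla F(x^{(t)})$ of Theorem~\ref{thm:univ_frm}, I set $[H^{(t)}]^{-1} = \eta^{(t)} Q_t$. Using $H^{(t)}[H^{(t)}]^{-1} = I$, the forcing term then collapses to
\[
\gamma^{(t)} = \|(\nabla^2 F(x^{(t)}) - H^{(t)})[H^{(t)}]^{-1}\| = \|I - \eta^{(t)}\nabla^2 F(x^{(t)}) Q_t\|.
\]

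Next I would split $\nabla^2 F(x^{(t)}) = H_{|\SM^{(t)}|} + (\nabla^2 F(x^{(t)}) - H_{|\SM^{(t)}|})$ and apply the triangle inequality,
\[
\gamma^{(t)} \le \|I - \eta^{(t)} H_{|\SM^{(t)}|}Q_t\| + \eta^{(t)}\,\|\nabla^2 F(x^{(t)}) - H_{|\SM^{(t)}|}\|\,\|Q_t\|.
\]
For the first term, since $H_{|\SM^{(t)}|}$ and $Q_t$ share the eigenbasis $\{u_i\}$, their product is symmetric with eigenvalues equal to $1$ (multiplicity $r$) and $\lambda_i^{(t)}/\lambda_{r+1}^{(t)}$ for $i>r$, all lying in $[\lambda_p^{(t)}/\lambda_{r+1}^{(t)},\,1]$. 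Taking $0<\eta^{(t)}\le 1$, the eigenvalues of $I - \eta^{(t)}H_{|\SM^{(t)}|}Q_t$ are nonnegative and bounded above by $1 - \eta^{(t)}\lambda_p^{(t)}/\lambda_{r+1}^{(t)}$, which controls the first term. For the second term, reading off the spectrum of $Q_t$ gives $\|Q_t\| = 1/\lambda_{r+1}^{(t)}$.

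The remaining ingredient is a tail bound on the sub-sampling error $\|\nabla^2 F(x^{(t)}) - H_{|\SM^{(t)}|}\|$. Each summand $\nabla^2 f_j(x^{(t)})$ has spectral norm at most $K$ by~\eqref{eq:k}, and $H_{|\SM^{(t)}|}$ is an unbiased uniform average, so a matrix Hoeffding/Bernstein inequality yields $\|\nabla^2 F(x^{(t)}) - H_{|\SM^{(t)}|}\| \le 4K\sqrt{\log(2p/\delta)/|\SM^{(t)}|}$ with probability at least $1-\delta$. Combining the three estimates reproduces exactly $\gamma^{(t)} \le \xi^{(t)}$. Finally, assuming $\xi^{(t)}<1$ forces $0<\gamma^{(t)}<1$, so parts 1 and 3 of Theorem~\ref{thm:univ_frm} deliver the linear and linear-quadratic rates respectively; since fresh samples make the concentration event independent across the $T$ iterations, both conclusions hold with probability $(1-\delta)^{T}$. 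I expect the main obstacle to be pinning down the matrix concentration step with the stated constant $4K$ and verifying that the advertised bound is indeed the maximum over the spectrum of $I-\eta^{(t)}H_{|\SM^{(t)}|}Q_t$ under the constraint $\eta^{(t)}\le 1$; the reduction of $\gamma^{(t)}$ to $\|I - \eta^{(t)}\nabla^2 F(x^{(t)})Q_t\|$ and the identity $\|Q_t\|=1/\lambda_{r+1}^{(t)}$ are routine once the NewSamp preconditioner is written spectrally.
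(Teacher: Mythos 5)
Your proposal is correct and follows essentially the same route as the paper: the same splitting $\nabla^2F(x^{(t)}) - H^{(t)} = (\nabla^2F(x^{(t)}) - H_{|\SM^{(t)}|}) + (H_{|\SM^{(t)}|} - H^{(t)})$ (your $Q_t$ is exactly $[H^{(t)}]^{-1}/\eta^{(t)}$, so your two terms are the paper's $\|I-H_{|\SM^{(t)}|}[H^{(t)}]^{-1}\|$ and $\|\nabla^2F(x^{(t)})-H_{|\SM^{(t)}|}\|\cdot\|[H^{(t)}]^{-1}\|$), the same spectral evaluation giving $1-\eta^{(t)}\lambda_p^{(t)}/\lambda_{r+1}^{(t)}$ and $\|[H^{(t)}]^{-1}\|=\eta^{(t)}/\lambda_{r+1}^{(t)}$, the same matrix Bernstein bound with constant $4K$ (the paper's Lemma~\ref{lem:matrix_bnd}), and the same final appeal to Theorem~\ref{thm:univ_frm}. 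Your explicit restriction $0<\eta^{(t)}\le 1$ to justify the norm of $I-\eta^{(t)}H_{|\SM^{(t)}|}Q_t$ is in fact slightly more careful than the paper, which asserts that step as an unqualified equality.
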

	Theorem~\ref{thm:NewSamp} explains the empirical results that Newsamp is applicable in training SVM which the Lipschitz continuity condition is not satisfied \cite{erdogdu2015convergence}. It is worth pointing out that the result about  the linear convergence rate without Lipschitz continuity is unknown before.
	
	\begin{theorem} \label{thm:Reg_subnewton}
		Assumption~\eqref{eq:k} holds, and $\lambda_{i}^{(t)}$ is the $i$-th eigenvalue of $H_{|\SM^{(t)}|}=\frac{1}{|\SM|}\sum_{j\in\mathcal{S}}\nabla^2 f_j(x^{(t)})$. Then $\gamma^{(t)}$ defined in Theorem~\ref{thm:univ_frm} has the following bound:
		\[
		\gamma^{(t)} \leq 1-\frac{\lambda_p^{(t)}}{\lambda^{(t)}_{p}+\alpha}+ \frac{4K}{\lambda_{p}^{(t)}+\alpha}\sqrt{\frac{\log (2p/\delta)}{|\SM^{(t)}|}}  \triangleq \xi^{(t)}.
		\]
		If $\xi^{(t)} < 1$  (by choosing $|\SM^{(t)}|$ properly), then  Algorithm~\ref{alg:reg_subsamp} has the following convergence properties:
		\begin{enumerate}
			\item The sequence $\{x^{(t)} \colon  t = 1, \dots, T\}$ converges linearly with probability $(1-\delta)^{T}$.
			\item If $\nabla^{2}F(x^{(t)})$ is Lipschitz continuous, then the  sequence $\{x^{(t)}\colon t = 1, \dots, T\}$ has a linear-quadratic convergence rate  with probability $(1-\delta)^{T}$.
		\end{enumerate}
	\end{theorem}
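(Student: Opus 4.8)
The plan is to lean entirely on Theorem~\ref{thm:univ_frm}, which collapses the convergence analysis into controlling the single scalar $\gamma^{(t)} = \|(\nabla^2 F(x^{(t)}) - H^{(t)})[H^{(t)}]^{-1}\|$. For Algorithm~\ref{alg:reg_subsamp} the preconditioner is the regularized subsampled Hessian $H^{(t)} = H_{|\SM^{(t)}|} + \alpha I$, whose eigenvalues are exactly $\lambda_i^{(t)} + \alpha$; in particular its smallest eigenvalue is $\lambda_p^{(t)} + \alpha$, so $\|[H^{(t)}]^{-1}\| = 1/(\lambda_p^{(t)} + \alpha)$. Once the stated bound $\gamma^{(t)} \leq \xi^{(t)} < 1$ is established, parts (1) and (3) of Theorem~\ref{thm:univ_frm} immediately yield the linear rate unconditionally and the linear-quadratic rate under Hessian Lipschitz continuity. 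Hence the only genuine work is the bound on $\gamma^{(t)}$.

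First I would carry out the algebraic decomposition. Writing $\nabla^2 F(x^{(t)}) - H^{(t)} = (\nabla^2 F(x^{(t)}) - H_{|\SM^{(t)}|}) - \alpha I$ and factoring out $[H^{(t)}]^{-1}$ gives
\[
(\nabla^2 F(x^{(t)}) - H^{(t)})[H^{(t)}]^{-1} = (\nabla^2 F(x^{(t)}) - H_{|\SM^{(t)}|})[H^{(t)}]^{-1} - \alpha [H^{(t)}]^{-1}.
\]
By the triangle inequality and submultiplicativity, the regularization term contributes $\|\alpha [H^{(t)}]^{-1}\| = \alpha/(\lambda_p^{(t)} + \alpha) = 1 - \lambda_p^{(t)}/(\lambda_p^{(t)} + \alpha)$, which is precisely the first two summands of $\xi^{(t)}$, while the remaining term is bounded by $\|\nabla^2 F(x^{(t)}) - H_{|\SM^{(t)}|}\|/(\lambda_p^{(t)} + \alpha)$. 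Everything thus reduces to controlling the spectral-norm sampling error $\|\nabla^2 F(x^{(t)}) - H_{|\SM^{(t)}|}\|$.

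The main obstacle is this matrix concentration estimate, and it is exactly where Assumption~\eqref{eq:k} enters. Since $H_{|\SM^{(t)}|} = \frac{1}{|\SM^{(t)}|}\sum_{j\in\SM^{(t)}}\nabla^2 f_j(x^{(t)})$ is an average of i.i.d.\ matrix samples with mean $\nabla^2 F(x^{(t)})$ and each summand bounded by $\|\nabla^2 f_j(x^{(t)})\| \leq K$, I would invoke a matrix Hoeffding (or matrix Bernstein) inequality to obtain
\[
\PB\!\left(\|\nabla^2 F(x^{(t)}) - H_{|\SM^{(t)}|}\| \geq 4K\sqrt{\tfrac{\log(2p/\delta)}{|\SM^{(t)}|}}\right) \leq \delta.
\]
Substituting this high-probability bound into the first term supplies exactly the third summand $\frac{4K}{\lambda_p^{(t)} + \alpha}\sqrt{\log(2p/\delta)/|\SM^{(t)}|}$, giving $\gamma^{(t)} \leq \xi^{(t)}$ on the event that the concentration bound holds. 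The delicate point is matching the dimensional constant (the factor $4$ and the $\log(2p/\delta)$ term) to the precise tail of the chosen matrix inequality; the condition $\xi^{(t)}<1$ then amounts to requiring $|\SM^{(t)}| > (4K/\lambda_p^{(t)})^2\log(2p/\delta)$, which is the ``choosing $|\SM^{(t)}|$ properly'' clause.

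Finally I would assemble the probabilistic claim. The concentration bound holds with probability at least $1-\delta$ per outer iteration; requiring it simultaneously across all $T$ iterations, using independence of the sampling between iterations, produces the overall probability $(1-\delta)^T$. On that event $\gamma^{(t)} \leq \xi^{(t)} < 1$ for every $t$, so Theorem~\ref{thm:univ_frm}(1) gives linear convergence and, under Hessian Lipschitz continuity, Theorem~\ref{thm:univ_frm}(3) gives the linear-quadratic rate, establishing both conclusions. The argument is structurally identical to that of Theorem~\ref{thm:NewSamp}; the sole change is that the additive regularizer $\alpha I$ replaces the truncated-spectrum regularization, so the denominator $\lambda_{r+1}^{(t)}$ there becomes $\lambda_p^{(t)} + \alpha$ here.
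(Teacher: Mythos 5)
Your proposal is correct and follows essentially the same route as the paper: the same splitting $\nabla^2 F(x^{(t)}) - H^{(t)} = (\nabla^2 F(x^{(t)}) - H_{|\SM^{(t)}|}) - \alpha I$ (the paper writes the regularization term as $\|I - H_{|\SM^{(t)}|}[H^{(t)}]^{-1}\|$, which equals your $\alpha\|[H^{(t)}]^{-1}\| = \alpha/(\lambda_p^{(t)}+\alpha)$), the same matrix-Bernstein concentration bound (the paper's Lemma~\ref{lem:matrix_bnd}) giving the $4K\sqrt{\log(2p/\delta)/|\SM^{(t)}|}$ term, the same $\|[H^{(t)}]^{-1}\| = 1/(\lambda_p^{(t)}+\alpha)$, and the same final appeal to Theorem~\ref{thm:univ_frm}. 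No gaps; your closing remark that the argument mirrors Theorem~\ref{thm:NewSamp} with $\lambda_{r+1}^{(t)}$ replaced by $\lambda_p^{(t)}+\alpha$ matches the paper's own observation.
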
	
	
	We conduct comparison between Theorem~\ref{thm:NewSamp} and Theorem~\ref{thm:Reg_subnewton}. If we set $\eta^{(t)}=1$ in Theorem~\ref{thm:NewSamp} and set $\alpha$ to satisfy $\lambda_p^{(t)}+\alpha = \lambda_{r+1}^{(t)}$ in Theorem~\ref{thm:Reg_subnewton}, then we can see that the convergence properties of Theorem~\ref{thm:NewSamp} and Thereom~\ref{thm:Reg_subnewton} are the same. Algorithm~\ref{alg:reg_subsamp} do not need to perform SVD comparing to Algorithm~\ref{alg:NewSamp}. In fact, Algorithm~\ref{alg:NewSamp} proposes a method to choose $\alpha$ in Algorithm~\ref{alg:reg_subsamp}.  
	\subsection{Inexact Subsampled Newton}
	In light of inexact Newton method, $[H^{(t)}]^{-1}\nabla F(x^{(t)})$ can also be computed inexactly by optimizing the following problem
	\begin{align}
	\argmin_{p} \frac{1}{2} p^T H^{(t)} p - p^T\nabla F(x^{(t)}) \label{eq: inexact_p}
	\end{align}
	This scheme has been used in several work \cite{byrd2011use,roosta2016subI,xu2016sub}. 
	Conjugate gradient is the most popuplar method to solve above problem \cite{byrd2011use,xu2016sub}. When optimization problem \eqref{eq: inexact_p} solved inexactly, we have the following property.
	
	\begin{theorem} \label{thm:inexact_p}
		We assume that $H^{(t)}$ is the sub-sampled Hessian and $p^{(t)}$ is an approximate solution satisfying $\|H^{(t)}p^{(t)} - \nabla F(x^{(t)})\| \leq \epsilon_0\|\nabla F(x^{(t)})\|$, where $0<\epsilon_0<1$. We define $\epsilon_1 \triangleq \|(\nabla^2F(x^{(t)}) - H^{(t)}) [H^{(t)}]^{-1}\|$ and $\gamma^{(t)} \triangleq \epsilon_0 + (1+\epsilon_0) \epsilon_1$. 
		If $\gamma^{(t)} < 1$, for update $x^{(t+1)} = x^{(t)} - p^{(t)}$, we have the following convergence properties: 
		\begin{enumerate}
			\item The sequence $\{x^{(t)} \colon  t = 1, \dots, T\}$ converges linearly.
			\item If $\nabla^{2}F(x^{(t)})$ is Lipschitz continuous, then the  sequence $\{x^{(t)}\colon t = 1, \dots, T\}$ has a linear-quadratic convergence rate.
		\end{enumerate}
	\end{theorem}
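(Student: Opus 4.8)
The plan is to reduce the statement to the inexact Newton framework of Theorem~\ref{thm:inexact_newton} by verifying that the approximate direction $p^{(t)}$ produces a residual $r^{(t)} = \nabla^2 F(x^{(t)})p^{(t)} - \nabla F(x^{(t)})$ obeying the forcing condition \eqref{eq: res_cond} with forcing factor exactly $\gamma^{(t)} = \epsilon_0 + (1+\epsilon_0)\epsilon_1$. Once this single residual bound is in place, both conclusions are immediate: linear convergence follows from \eqref{eq:conv_lin}, since $\gamma^{(t)}<1$ together with $\eta = o(1)$ gives $\gamma^{(t)} + 6M\eta < 1$ whenever $x^{(0)}$ is taken close enough to $x^*$; and the linear-quadratic rate follows verbatim from \eqref{eq:Lip_conv} once $\nabla^2 F$ is assumed Lipschitz near $x^*$.

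The core of the work is the residual estimate. I would split $r^{(t)}$ by inserting and subtracting $H^{(t)}p^{(t)}$,
\[
r^{(t)} = \big(\nabla^2 F(x^{(t)}) - H^{(t)}\big)p^{(t)} + \big(H^{(t)}p^{(t)} - \nabla F(x^{(t)})\big).
\]
The second term is controlled directly by the inexact-solve hypothesis, $\|H^{(t)}p^{(t)} - \nabla F(x^{(t)})\| \leq \epsilon_0\|\nabla F(x^{(t)})\|$. For the first term the key move is to route $p^{(t)}$ through the inverse, writing $p^{(t)} = [H^{(t)}]^{-1}(H^{(t)}p^{(t)})$, so that
\[
\big(\nabla^2 F(x^{(t)}) - H^{(t)}\big)p^{(t)} = \big(\nabla^2 F(x^{(t)}) - H^{(t)}\big)[H^{(t)}]^{-1}\,H^{(t)}p^{(t)},
\]
whose norm is at most $\epsilon_1\,\|H^{(t)}p^{(t)}\|$ by the very definition of $\epsilon_1$. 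A final triangle inequality gives $\|H^{(t)}p^{(t)}\| \leq (1+\epsilon_0)\|\nabla F(x^{(t)})\|$, again from the inexactness hypothesis. Summing the two estimates yields $\|r^{(t)}\| \leq (\epsilon_0 + (1+\epsilon_0)\epsilon_1)\|\nabla F(x^{(t)})\| = \gamma^{(t)}\|\nabla F(x^{(t)})\|$, as required.

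I do not expect a genuine obstacle; the only delicate point is choosing the factorization $p^{(t)} = [H^{(t)}]^{-1}(H^{(t)}p^{(t)})$ so that the Hessian mismatch enters precisely through the scale-free quantity $\epsilon_1$ appearing in the definition of $\gamma^{(t)}$, rather than through a crude product of $\|\nabla^2 F(x^{(t)}) - H^{(t)}\|$ and $\|p^{(t)}\|$ that would be condition-number dependent. This routing is exactly what lets the two sources of inexactness, the sub-sampling error $\epsilon_1$ and the inner-solve error $\epsilon_0$, combine into the single factor $\epsilon_0 + (1+\epsilon_0)\epsilon_1$. As a consistency check, setting $\epsilon_0 = 0$ (an exact inner solve) recovers $\gamma^{(t)} = \epsilon_1 = \|(\nabla^2 F(x^{(t)}) - H^{(t)})[H^{(t)}]^{-1}\|$, which is precisely the forcing factor of Theorem~\ref{thm:univ_frm}; the present result is therefore the natural extension of that framework to an inexactly solved inner problem. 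With $\gamma^{(t)}<1$ established, Theorem~\ref{thm:inexact_newton} closes both parts.
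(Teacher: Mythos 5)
Your proof is correct and yields exactly the paper's forcing factor, but the residual estimate is organized around a different decomposition. The paper introduces the exact preconditioned solution $p^* = [H^{(t)}]^{-1}\nabla F(x^{(t)})$ and splits $r^{(t)}$ into three pieces: $\nabla^2F(x^{(t)})p^* - \nabla F(x^{(t)})$, which contributes $\epsilon_1\|\nabla F(x^{(t)})\|$; $H^{(t)}(p^{(t)}-p^*) = H^{(t)}p^{(t)} - \nabla F(x^{(t)})$, which contributes $\epsilon_0\|\nabla F(x^{(t)})\|$; and the cross term $(\nabla^2F(x^{(t)})-H^{(t)})[H^{(t)}]^{-1}H^{(t)}(p^{(t)}-p^*)$, which contributes $\epsilon_0\epsilon_1\|\nabla F(x^{(t)})\|$. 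You never introduce $p^*$: you split around $H^{(t)}p^{(t)}$ and bound $\|H^{(t)}p^{(t)}\| \leq (1+\epsilon_0)\|\nabla F(x^{(t)})\|$, obtaining two terms, $\epsilon_0$ and $(1+\epsilon_0)\epsilon_1$, whose sum equals the paper's $\epsilon_1 + \epsilon_0 + \epsilon_0\epsilon_1$. Both arguments turn on the same key maneuver---inserting $[H^{(t)}]^{-1}H^{(t)}$ so that the Hessian mismatch enters through the scale-free quantity $\epsilon_1$ rather than through $\|\nabla^2F(x^{(t)})-H^{(t)}\|$ times a norm of $p^{(t)}$---so yours is a mild but genuine simplification: two terms instead of three, and no auxiliary point. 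One detail where your route is actually cleaner: you close by invoking Theorem~\ref{thm:inexact_newton} through the forcing condition \eqref{eq: res_cond}, whereas the paper invokes Theorem~\ref{thm:univ_frm}; since Theorem~\ref{thm:univ_frm} is stated for the exact inner solve $v^{(t)} = [H^{(t)}]^{-1}\nabla F(x^{(t)})$, its conclusions apply to the inexact $p^{(t)}$ only because they are really consequences of Theorem~\ref{thm:inexact_newton} under the forcing bound---which is precisely the appeal you make explicitly.
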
 
	
	Lemma 7 in \cite{xu2016sub} gives a similar convergence result to Theorem~\ref{thm:inexact_p}. However, Theorem~\ref{thm:inexact_p} are preferable due to two advantages. The first one is that the condition $\|H^{(t)}p^{(t)} - \nabla F(x^{(t)})\| \leq \epsilon_0\|\nabla F(x^{(t)})\|$ can be easily checked in optimization procedure. The condition is $\|p^{(t)} - [H^{(t)}]^{-1}\nabla F(x^{(t)})\| \leq \epsilon_0 \|[H^{(t)}]^{-1}\nabla F(x^{(t)})\|$ in \cite{xu2016sub}. This condition is very hard to check in the procedure of solving \eqref{eq: inexact_p}, hence, it has to stop the optimization iteration by experience. The second one is that Theorem~\ref{thm:inexact_p} does not need Lipschitz continuity condition to get a linear convergence rate.
	\begin{algorithm}[tb]
		\caption{Sub-sample Hessian and Gradient.}
		\label{alg:H_G_subsamp}
		\begin{small}
			\begin{algorithmic}[1]
				\STATE {\bf Input:} $x^{(0)}$, $0<\delta<1$, $0<\epsilon<1$;
				\STATE Set the sample size $|\mathcal{S}_H|$ and $|\mathcal{S}_{\text{g}}|$.
				\FOR {$t=0,1,\dots$ until termination}
				\STATE Select a sample set $\SM_H$, of size $|\SM|$ and construct $H^{(t)} = \frac{1}{|\SM|}\sum_{j\in\mathcal{S}}\nabla^2 f_j(x^{(t)})$;
				\STATE Select a sample set $\SM_{\text{g}}$ of size $|\SM_{\text{g}}|$ and calculate $\text{g}(x^{(t)}) = \frac{1}{|\SM_{\text{g}}|}\sum_{i\in\SM_{\text{g}}}\nabla f_i(x^{(t)})$.
				\STATE Update $x^{(t+1)}= x^{(t)}-[H^{(t)}]^{-1}\text{g}(x^{(t)})$;
				\ENDFOR
			\end{algorithmic}
		\end{small}
	\end{algorithm}
	\subsection{Subsampled Hessian and Gradient}
	In fact, we can also subsample gradient to accelerate sub-sampled Newton method depicted in Algorithm~\ref{alg:H_G_subsamp} \cite{byrd2011use,roosta2016sub}. 
	
	\begin{theorem} \label{thm:sub_gradient}
		We assume that $H^{(t)}$ is the sub-sampled Hessian and $g(x^{(t)})$ is a sub-sampled gradient constructed in Algorithm~\ref{alg:H_G_subsamp} satisfying $\|\text{g}(x^{t}) - \nabla F(x^{(t)})\| \leq \epsilon_0 \|\nabla F(x^{(t)})\|$. We define $\epsilon_1 \triangleq \|(\nabla^2F(x^{(t)}) - H^{(t)}) [H^{(t)}]^{-1}\|$ and $\gamma^{(t)} \triangleq \epsilon_0 \|\nabla^2F(x^{(t)})[H^{(t)}]^{-1}\| + \epsilon_1$. 
		If $\gamma^{(t)} < 1$, for update $x^{(t+1)} = x^{(t)} - p^{(t)}$, where $p^{(t)} = [H^{(t)}]^{-1}\text{g}(x^{(t)})$, Algorithm~\ref{alg:H_G_subsamp} have the following convergence properties: 
		\begin{enumerate}
			\item The sequence $\{x^{(t)} \colon  t = 1, \dots, T\}$ converges linearly.
			\item If $\nabla^{2}F(x^{(t)})$ is Lipschitz continuous, then the  sequence $\{x^{(t)}\colon t = 1, \dots, T\}$ has a linear-quadratic convergence rate.
		\end{enumerate}
	\end{theorem}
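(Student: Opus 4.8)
The plan is to recognize Algorithm~\ref{alg:H_G_subsamp} as an instance of the inexact Newton iteration analyzed in Theorem~\ref{thm:inexact_newton}, and thereby to reduce the whole statement to a single estimate: that the update direction $p^{(t)} = [H^{(t)}]^{-1}\text{g}(x^{(t)})$ produces a residual $r^{(t)} = \nabla^2 F(x^{(t)})p^{(t)} - \nabla F(x^{(t)})$ obeying the forcing condition \eqref{eq: res_cond} with exactly the $\gamma^{(t)}$ given in the statement. Once this residual bound is in hand, both parts follow at once: the general bound \eqref{eq:conv_lin} of Theorem~\ref{thm:inexact_newton}, together with $\gamma^{(t)}<1$ and $\eta = o(1)$, gives linear convergence once $x^{(t)}$ is sufficiently close to $x^*$, and the Lipschitz bound \eqref{eq:Lip_conv} gives the linear-quadratic rate. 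This is the same reduction already used for Theorem~\ref{thm:univ_frm} and Theorem~\ref{thm:inexact_p}; only the computation of $\gamma^{(t)}$ differs, since here the error enters through the subsampled gradient rather than through an inexact linear solve.

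The key step is the decomposition of the residual. Writing $\nabla^2 F$, $H$, $\text{g}$, $\nabla F$ for the quantities evaluated at $x^{(t)}$, I would insert and subtract $\nabla^2 F\, H^{-1}\nabla F$ to obtain
\[
r^{(t)} = \nabla^2 F\, H^{-1}\bigl(\text{g} - \nabla F\bigr) + \bigl(\nabla^2 F\, H^{-1} - I\bigr)\nabla F,
\]
and then rewrite the second factor using the identity $\nabla^2 F\, H^{-1} - I = (\nabla^2 F - H)H^{-1}$. Taking norms, the first term is at most $\|\nabla^2 F\, H^{-1}\|\cdot\|\text{g}-\nabla F\|$, and the gradient-subsampling hypothesis $\|\text{g}-\nabla F\|\le \epsilon_0\|\nabla F\|$ turns this into $\epsilon_0\|\nabla^2 F\, H^{-1}\|\,\|\nabla F\|$; the second term is at most $\|(\nabla^2 F - H)H^{-1}\|\,\|\nabla F\| = \epsilon_1\|\nabla F\|$ by the definition of $\epsilon_1$. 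Summing yields $\|r^{(t)}\|\le \bigl(\epsilon_0\|\nabla^2 F\, H^{-1}\| + \epsilon_1\bigr)\|\nabla F(x^{(t)})\| = \gamma^{(t)}\|\nabla F(x^{(t)})\|$, which is precisely \eqref{eq: res_cond}.

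The one point deserving care — and the reason the formula for $\gamma^{(t)}$ here differs from that of Theorem~\ref{thm:inexact_p} — is that the gradient error $\text{g}-\nabla F$ enters the residual premultiplied by $\nabla^2 F\, H^{-1}$, and so is amplified by the operator norm $\|\nabla^2 F(x^{(t)})[H^{(t)}]^{-1}\|$ rather than by a factor $(1+\epsilon_0)$. This is a genuinely different error path from the inexact-solve case, so the two contributions must be kept separate in the decomposition above and not merged. I do not expect any obstacle beyond getting this split right: with the residual bound established and $\gamma^{(t)}<1$ assumed, the remaining argument is a verbatim application of Theorem~\ref{thm:inexact_newton}, where Lipschitz continuity of $\nabla^2 F$ is invoked only for the second, linear-quadratic claim and $M$ is the constant supplied by that theorem.
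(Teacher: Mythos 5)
Your proposal is correct and follows essentially the same route as the paper: the paper also splits the residual via $p^{(t)} = p^* + (p^{(t)}-p^*)$ with $p^* = [H^{(t)}]^{-1}\nabla F(x^{(t)})$, bounds the two pieces by $\epsilon_1\|\nabla F(x^{(t)})\|$ and $\epsilon_0\|\nabla^2F(x^{(t)})[H^{(t)}]^{-1}\|\,\|\nabla F(x^{(t)})\|$ exactly as you do, and then concludes through the inexact-Newton framework (the paper cites Theorem~\ref{thm:univ_frm}, whose convergence conclusions are themselves just Equations~\eqref{eq:conv_lin} and~\eqref{eq:Lip_conv} of Theorem~\ref{thm:inexact_newton}, which you invoke directly). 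Your remark that the gradient error is amplified by $\|\nabla^2F(x^{(t)})[H^{(t)}]^{-1}\|$ rather than a $(1+\epsilon_0)$ factor matches the paper's treatment precisely.
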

	Theorem 13 in \cite{roosta2016sub} got an R-linear convergence rate which is $\|x^{(t)}-x^*\| \leq \rho^{t}\sigma$, where $0<\rho<1$ and $\sigma$ is a constant. To get R-linear convergence with rate $\rho$, it needs $\|\text{g}(x^{t}) - \nabla F(x^{(t)})\| \leq \epsilon_0 \|\nabla F(x^{(t)})\|$ and $\epsilon_0 = \rho^{t} \epsilon$, where $0<\epsilon<1$ is a constant. This means that it has to increase sampling gradients quickly as iteration goes. As a contrast, Theorem~\ref{thm:sub_gradient} need to increase the number of sampled gradients much slower to achieve linear since convergence since $\epsilon_0$ is a constant. Hence, our result is more attractive. 
	
	In common case, sub-sampled gradient $\text{g}(x^{(t)})$ needs to subsample over $80$ percents samples to guarantee $\|\text{g}(x^{t}) - \nabla F(x^{(t)})\| \leq \epsilon_0 \|\nabla F(x^{(t)})\|$ as iteration goes. \citet{roosta2016sub} showed that it needs $|\SM_{\text{g}}|\geq\frac{G(x^{(t)})^2}{\epsilon_0^2}$, where $G(x^{(t)}) = \max_i\|\nabla f_i(x^{(t)})\|$ for $i=1,\dots,n$. When $x^{(t)}$ is close to $x^{*}$, $G(x^{(t)})$  is large in common cases. This is the reason why Newton method and variants of sub-sampled Newton methods are very sensitive to the accuracy of sub-sampled gradient.
	
	\subsection{Discussion}
	In fact, the perspective of inexact Newton procedure may provide potential inspirations for developing more efficient sub-sampled Newton methods. For example, \citet{byrd2011use} proposed use to conjugate method to solve $[H^{(t)}]^{-1}\text{g}(x^{(t)})$ approximately, where $H^{(t)}$ and $\text{g}(x^{(t)})$ are sub-sampled Hessian and sub-sampled gradient respectively. This is a method combining inexact Subsampled Newton with sub-sampled gradient.  
	\section{Empirical Study}
	
	In this section we present experimental evaluation for our algorithms. We perform the experiments for binary classification problems. We use the following popular and standard Ridge Logistic Regression
	\begin{equation*}
	F(x) = \frac{1}{n}\sum_{i=1}^{n} \log (1+\exp(-b_i\langle a_i, x\rangle)) + \frac{\lambda}{2}\|x\|^2,
	\end{equation*}
	where $a_i \in \RB^{p}$ is the $i$-th input vector, $b_i$ is the $i$-th label and $n$ is the number of training samples. A small value of $\lambda$ often means a hard optimization problem of Ridge Logistic Regression for sub-sampled Newton methods. We perform optimization over Ridge Logistic Regression on four data sets: \emph{Nomao}, \emph{Covertype}, \emph{a9a}, and \emph{w8a}. We give the detailed description of the datasets in Table~\ref{tb:data}.
	
	\begin{table}[]
		\centering
		\caption{Datasets summary(sparsity$=\frac{\#\text{Non-Zero Entries}}{n\times p}$)}
		\label{tb:data}
		\begin{tabular}{|c|c|c|c|l|}
			\hline
			Dataset & $n$        & $p$      & sparsity       & source \\ \hline
			Nomao   &  $34465$   & $119$    &    dense   &   \cite{candillier2012design}   \\ \hline
			Covertype   &  $581012$   & $54$    &    dense   &   \cite{Blackard1999Comparative} \\ \hline
			a9a     &  $32561$   & $123$    &    $11.28\%$    &   \cite{platt199912}     \\ \hline
			w8a     &  $49749$   & $300$    &    $3.88\%$     &     \cite{platt199912}     \\ \hline
		\end{tabular}
	\end{table}
	
	We set $tol = \min(0.1,\sqrt{\|\nabla F(x^{(t)})\|})\|\nabla F(x^{(t)})\|$ in ReSubNewton and Newton-CG. Subsampled Newton with conjuate gradient(SNCG) will obtain a $p^{(t)}$ satisfying $\|H^{(t)}p^{(t)} - \nabla F(x^{(t)})\| \leq 0.05\cdot\|\nabla F(x^{(t)})\|$. In our experiment, we implement ReSubNewton as Algorithm~\ref{alg:preconditioned Newton-CG}. Besides, the first sevral iterations of ReSubNewton are implemented in Subsampled Newton with conjugate gradient to achieve faster speed. This scheme is reasonable for ReSubNewton because it will reduce to SubNewton if we set $tol$ big enough in Algorithm~\ref{alg:H_subsamp_iter}.  
	
	First, we compare ReSubNewton with Subsampled Newton with conjugate gradient. In the experiment, ReSubNewton and SNCG will subsample the same number of samples. We will change the sampling number and compare their convergence properties. We conduct our experiment on 'w8a' and set $\lambda = 0.0001$. The result is shown in Figure~\ref{fig:conv_comp}. As we can see, ReSubNewton is very robust. It converges superlinearly even when there are only $2.5$ percents of samples. In contrast, SNCG converges linearly only when sampling $20$ percents. This is because ReSubNewton is independent of condition number just as Corollary~\ref{cor:rand_samp} shows. However, the sampling number of SubSampled Newton and Sketched Newton both depends on condition number \cite{roosta2016sub,pilanci2015newton}. That is the reason why ReSubNewton is much robust than Subsampled Newton. Similarly, ReSkeNewton is also independent of condition number. 
	\begin{figure}[!ht]
		\subfigtopskip = 0pt
		\begin{center}
			\centering
			\subfigure[\textsf{$20$ percents of samples .}]{\includegraphics[width=45mm]{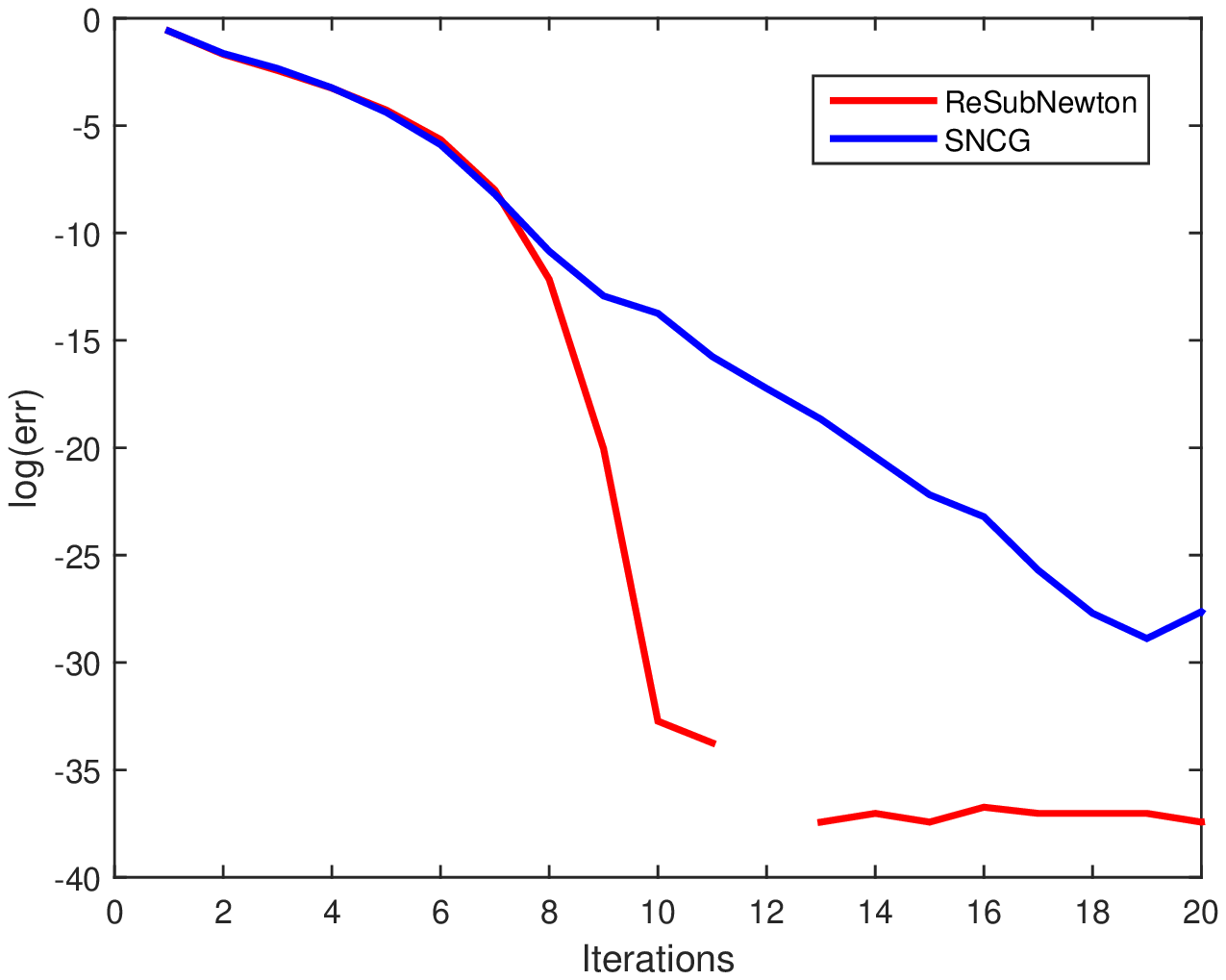}}~
			\subfigure[\textsf{$5$ percents of samples .}]{\includegraphics[width=45mm]{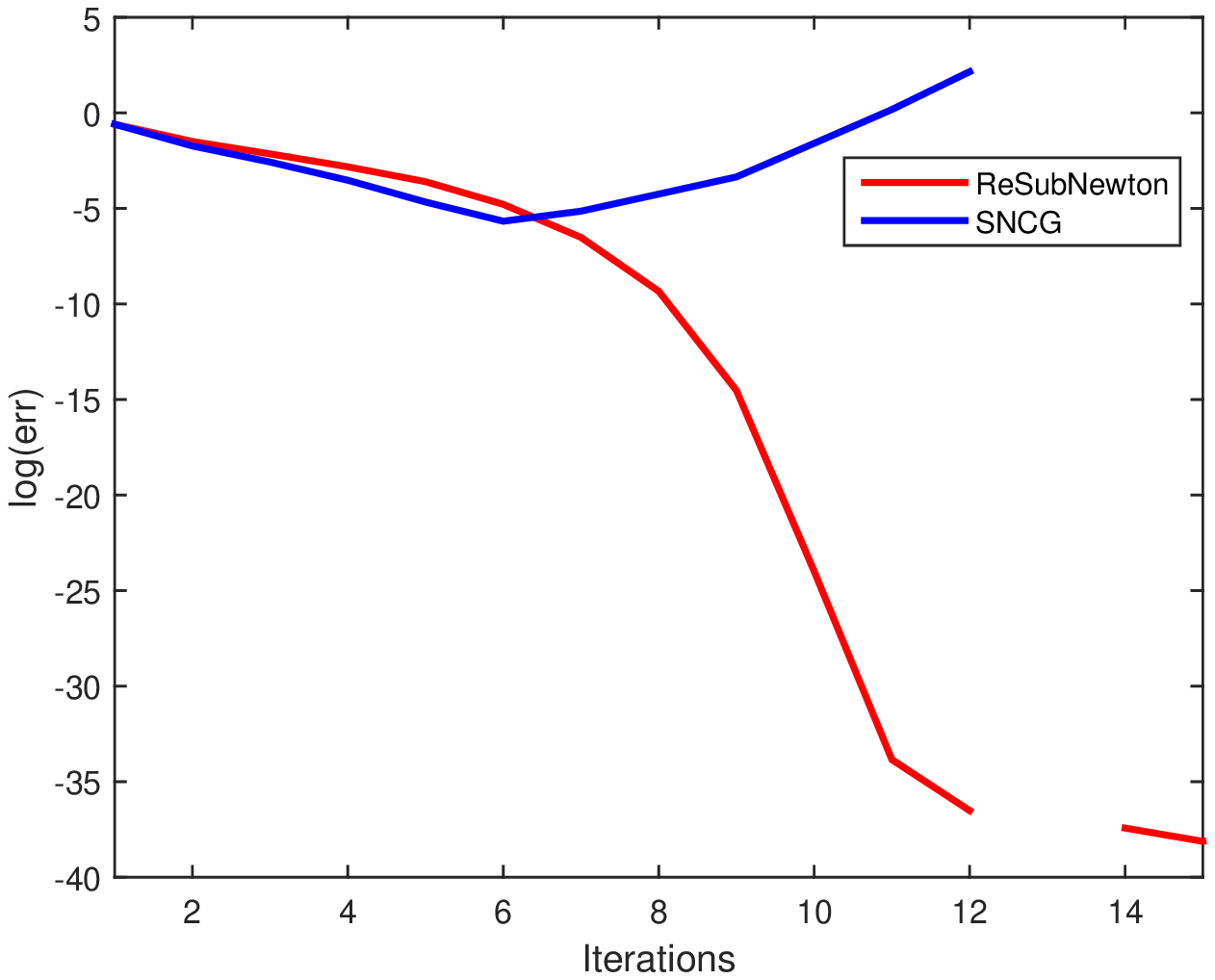}}~
			\subfigure[\textsf{$2.5$ percents of samples .}]{\includegraphics[width=45mm]{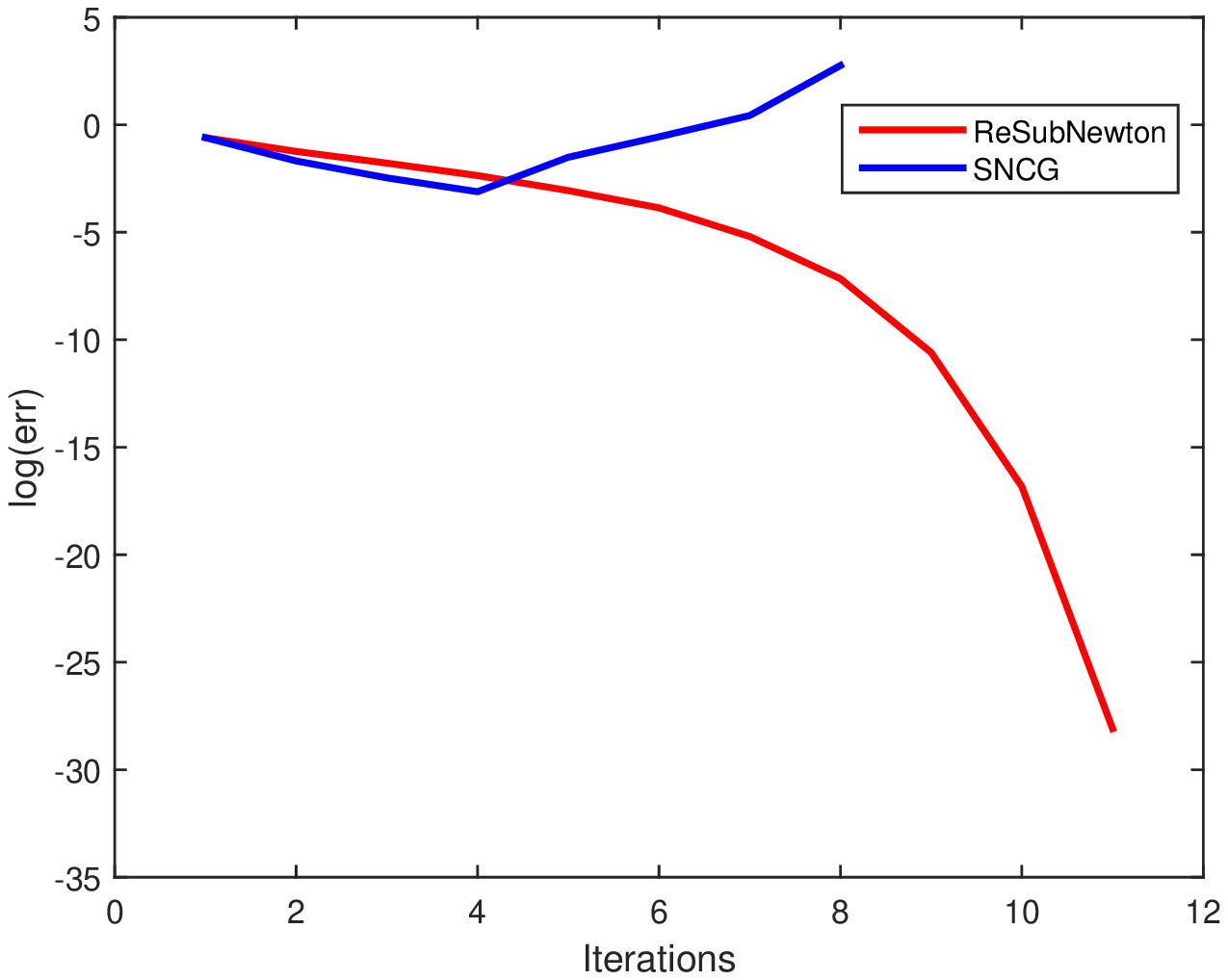}}\\
		\end{center}
		\caption{Convergence properties on different sample size.}
		\label{fig:conv_comp}
	\end{figure}
	
	Then, we compare ReSubNewton with BFGS and several important algorithms, including the standard Newton method, Newton-CG(NCG), sub-sampled Newton with conjuate gradient(SNCG). In this paper, we do not compare with SkeNewton and ReSkeNewton because the Hessian of Ridge Logistic Regression if of the form $\nabla^2F(x^{(t)}) =\frac{1}{n}\sum_{i=1}^{n}\nabla^{2}f_i(x^{(t)}) + \lambda I=\frac{1}{n}\sum_{i=1}^{n}B(i,:)^TB(i,:) +\lambda I= B^TB + \lambda I$. And Corollary~\ref{cor:rand_samp} implies that SubNewton and ReSubNewton is a special case of SkeNewton and ReSkeNewton respectively in this case with some tranformation. In the experiment, we set different value to $\lambda$ to compare the performance of algorithms in different condition number.
	
	We report results in Figure~\ref{fig:Nomao},~\ref{fig:ctp},~\ref{fig:a9a} and~\ref{fig:w8a}. As we can see, ReSubNewton achieves superlinear convergence rates on all datasets which are even close to quadratic convergence rates on all datasets. SNCG starts with quadratic convergence rate and transform into linear convergence rate. Furthermore, we can find that  ReSubNewton is very robust to the value of $\lambda$ and datasets from experiments. When $\lambda = 10^{-5}$, other algorithms all perform poorly, ReSubNewton still keeps a superlinear convergence rate and fast speed. Besides, SNCG, NCG and BFGS all show poor performance on 'covertype', however, ReSubNewton convergence very fast and has great advantages on running time. In our experiments, SNCG, NCG and BFGS all show sensitivity to the value of $\lambda$ and dataset. They have good performance, when $\lambda$ is big which means a well-conditioned Hessian matrix. However, when $\lambda$ is small which leads to ill-conditioned Hessian matrix in our experiments, they obtain poor performance. Hence, ReSubNewton is more robust and efficient than SNCG, NCG and BFGS.
	
	\begin{figure}[!ht]
		\subfigtopskip = 0pt
		\begin{center}
			\centering
			\subfigure{\includegraphics[width=45mm]{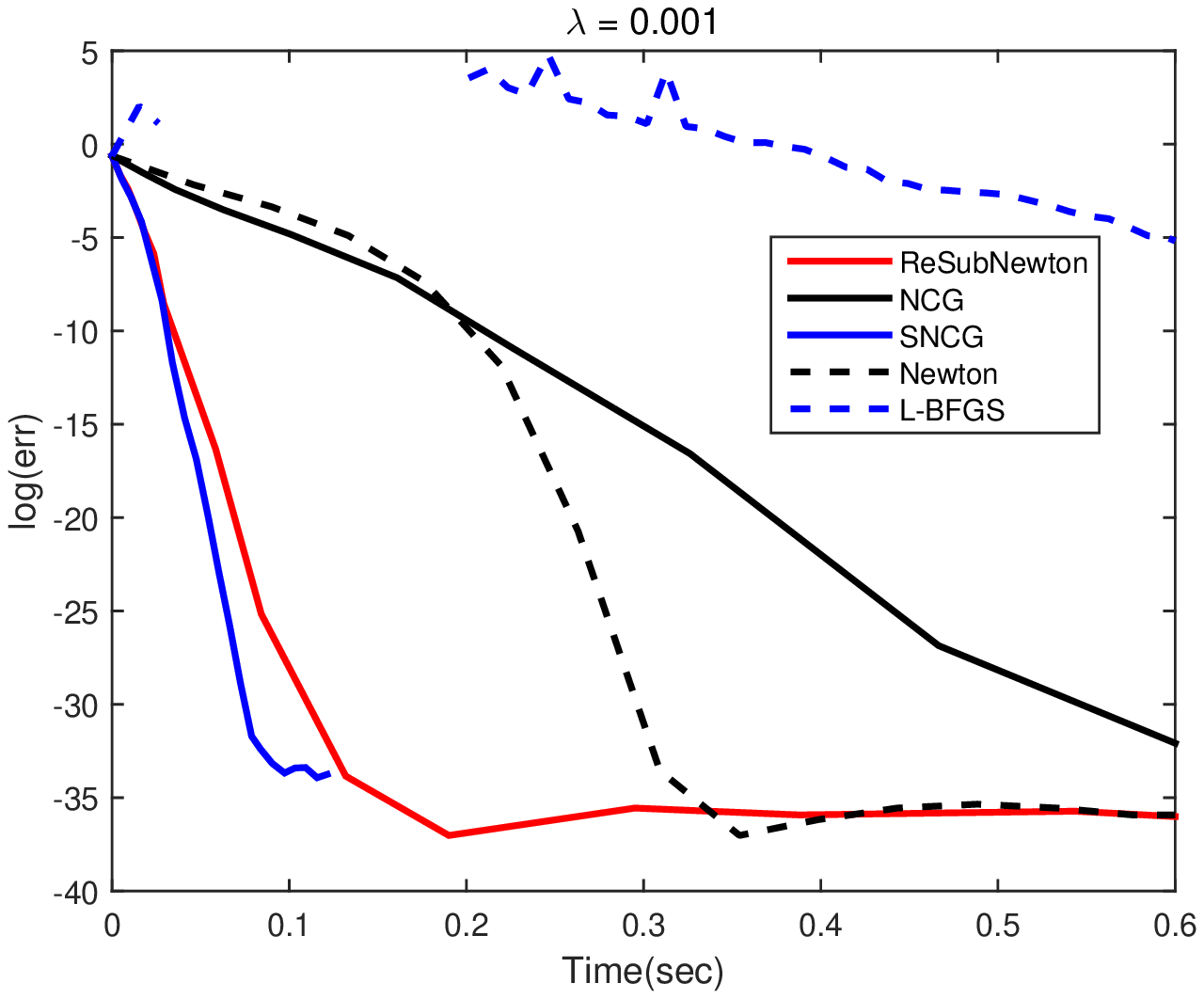}}~
			\subfigure{\includegraphics[width=45mm]{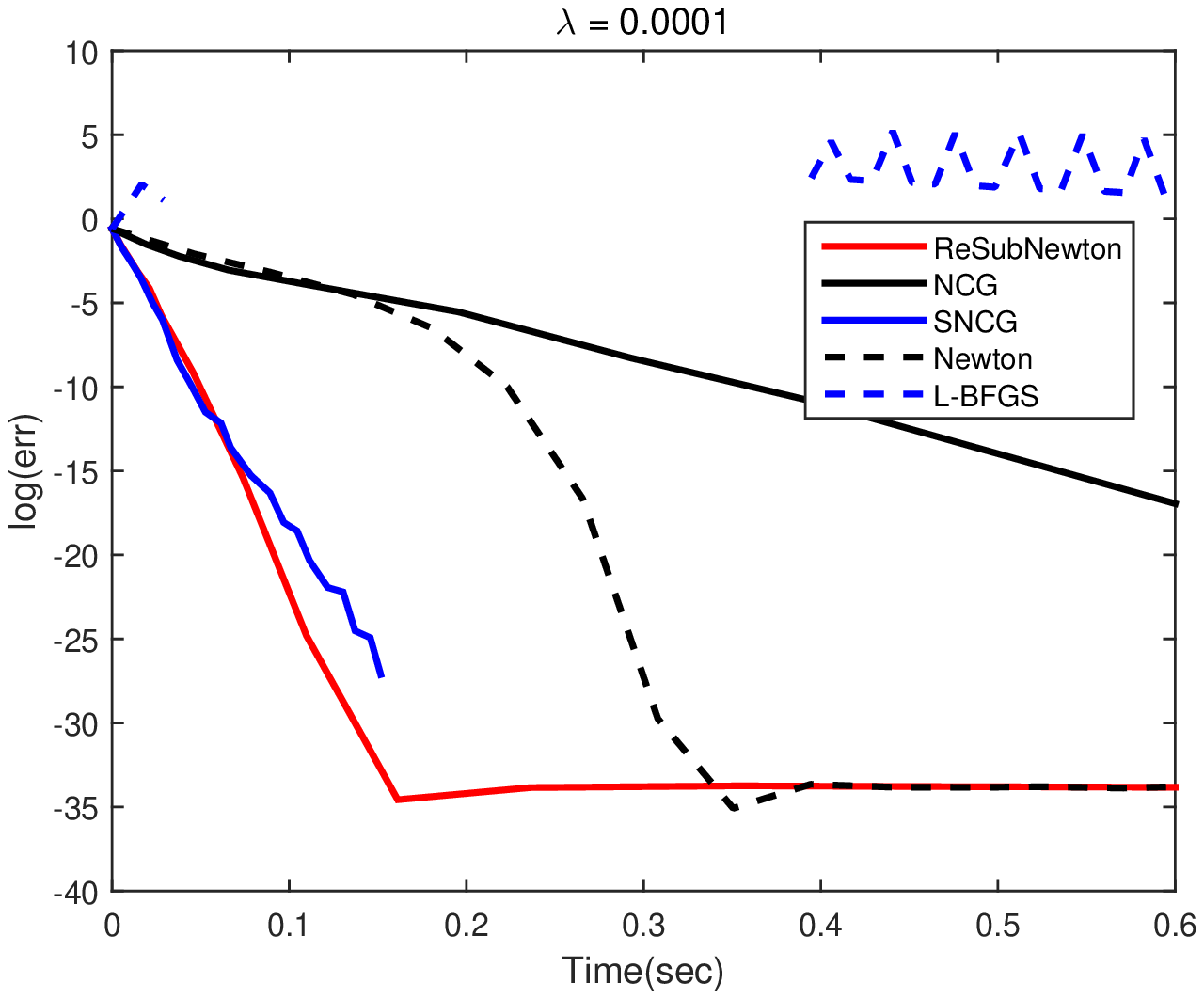}}~
			\subfigure{\includegraphics[width=45mm]{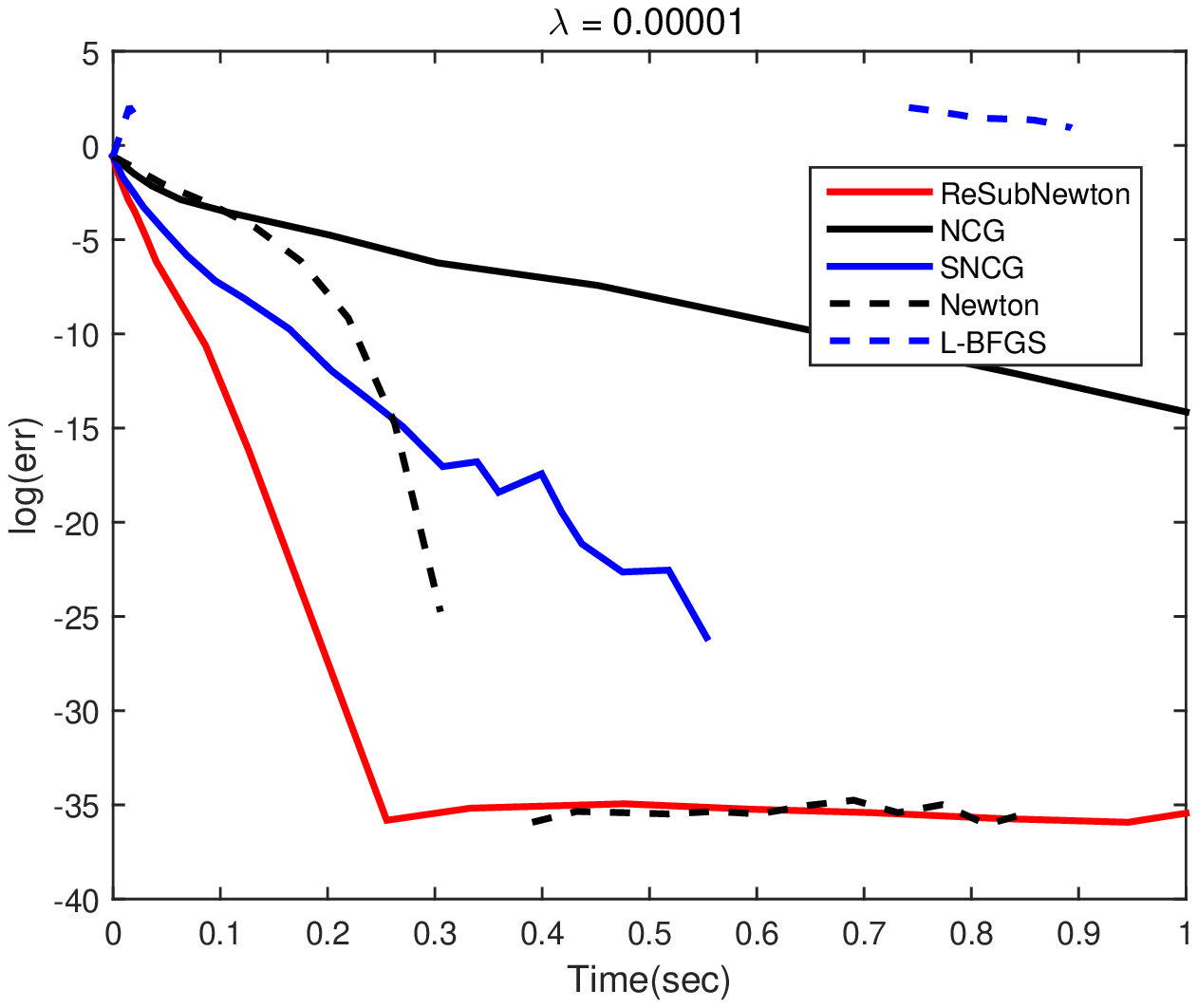}}\\
			\subfigure{\includegraphics[width=45mm]{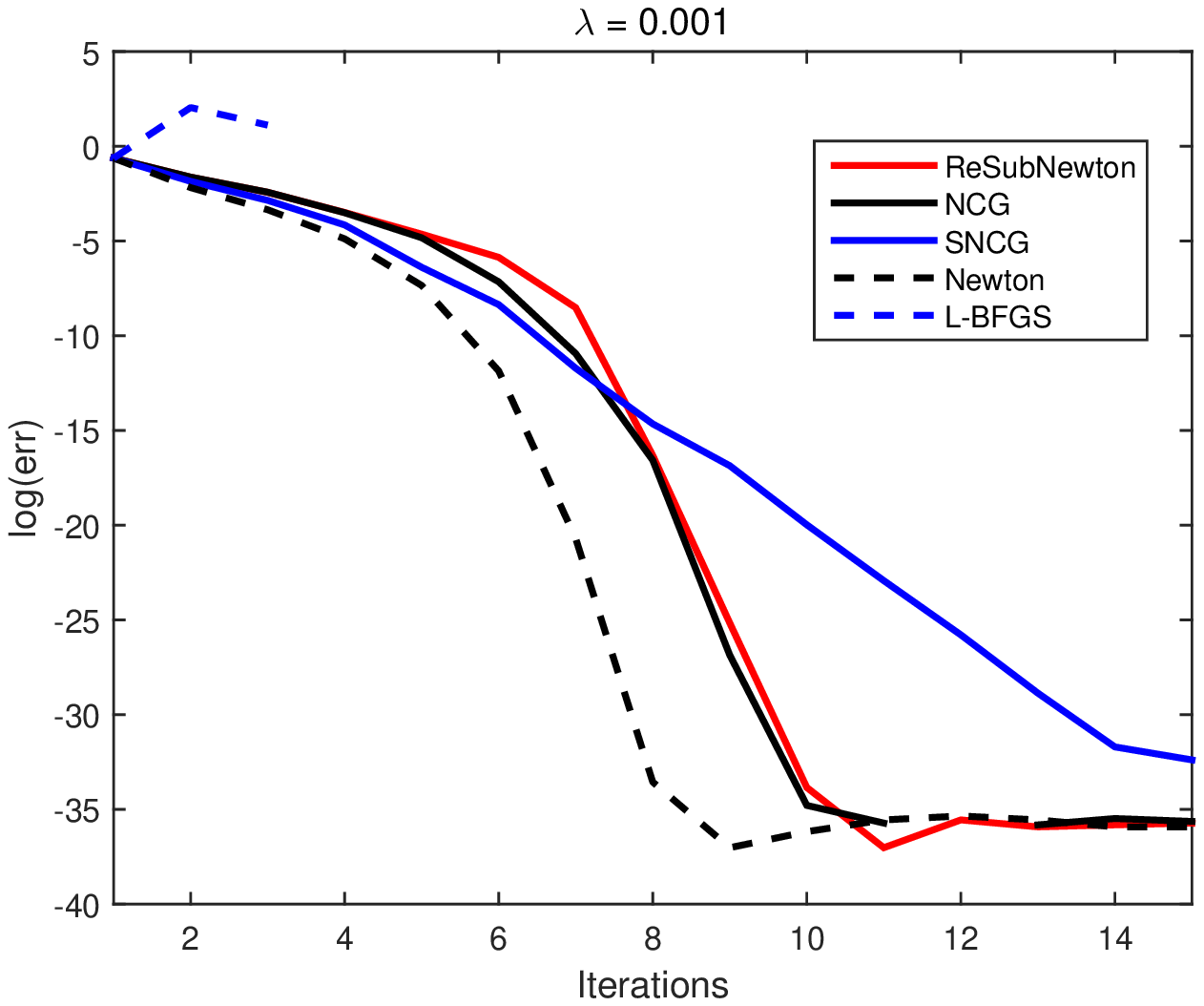}}~
			\subfigure{\includegraphics[width=45mm]{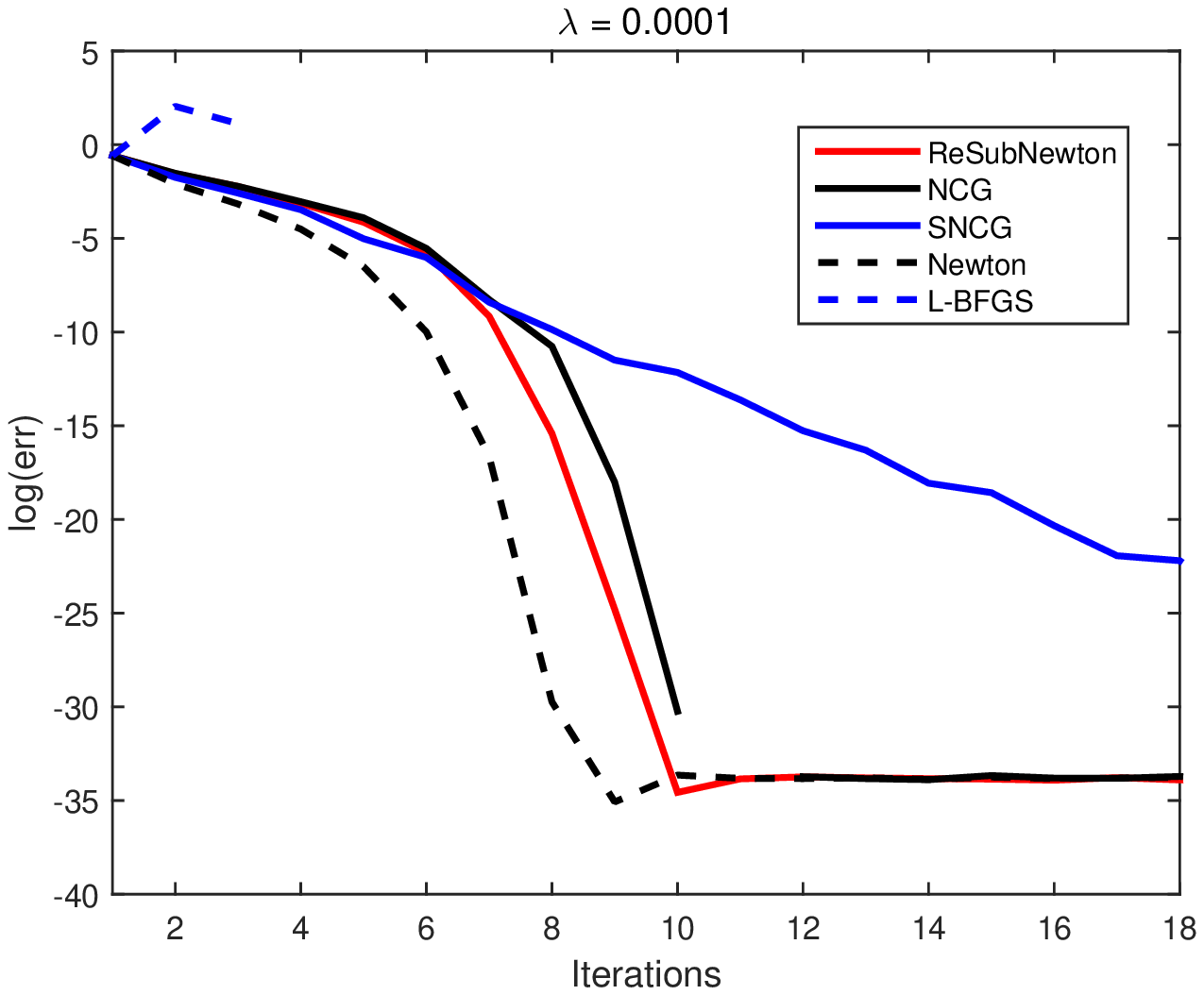}}~
			\subfigure{\includegraphics[width=45mm]{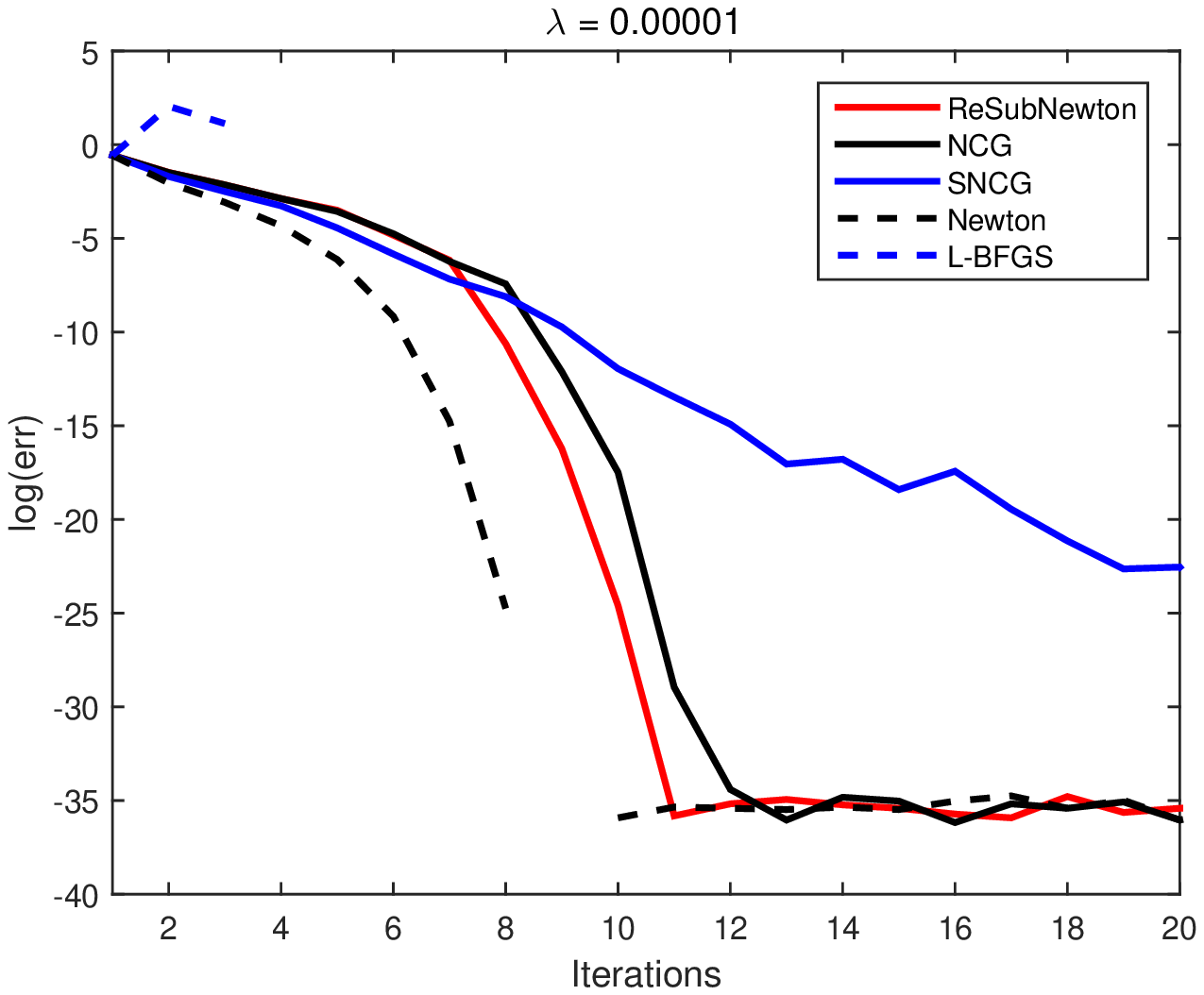}}
		\end{center}
		\caption{Experiment on 'Nomao' with different $\lambda$}
		\label{fig:Nomao}
	\end{figure}
	
	\begin{figure}[!ht]
		\subfigtopskip = 0pt
		\begin{center}
			\centering
			\subfigure{\includegraphics[width=45mm]{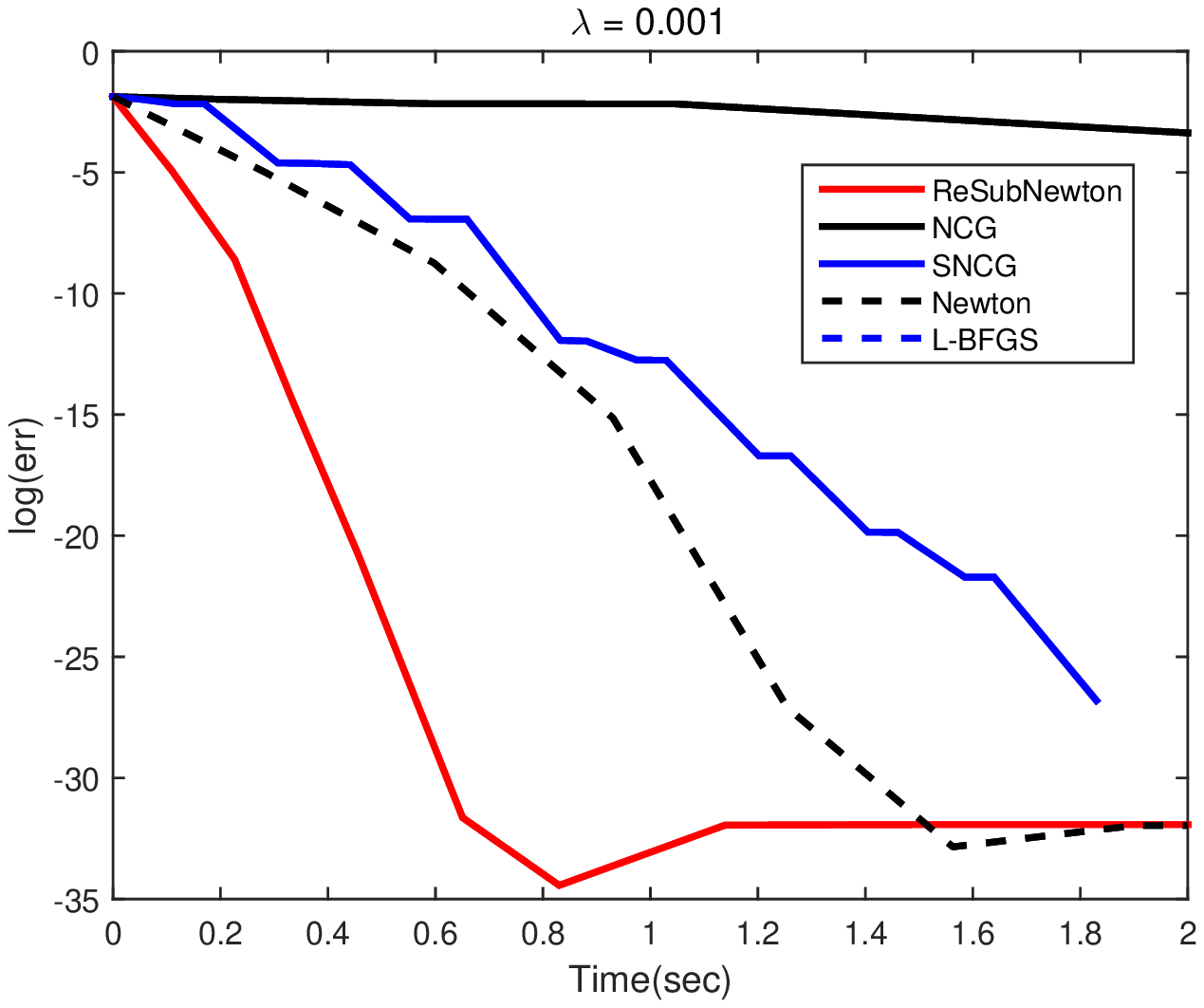}}~
			\subfigure{\includegraphics[width=45mm]{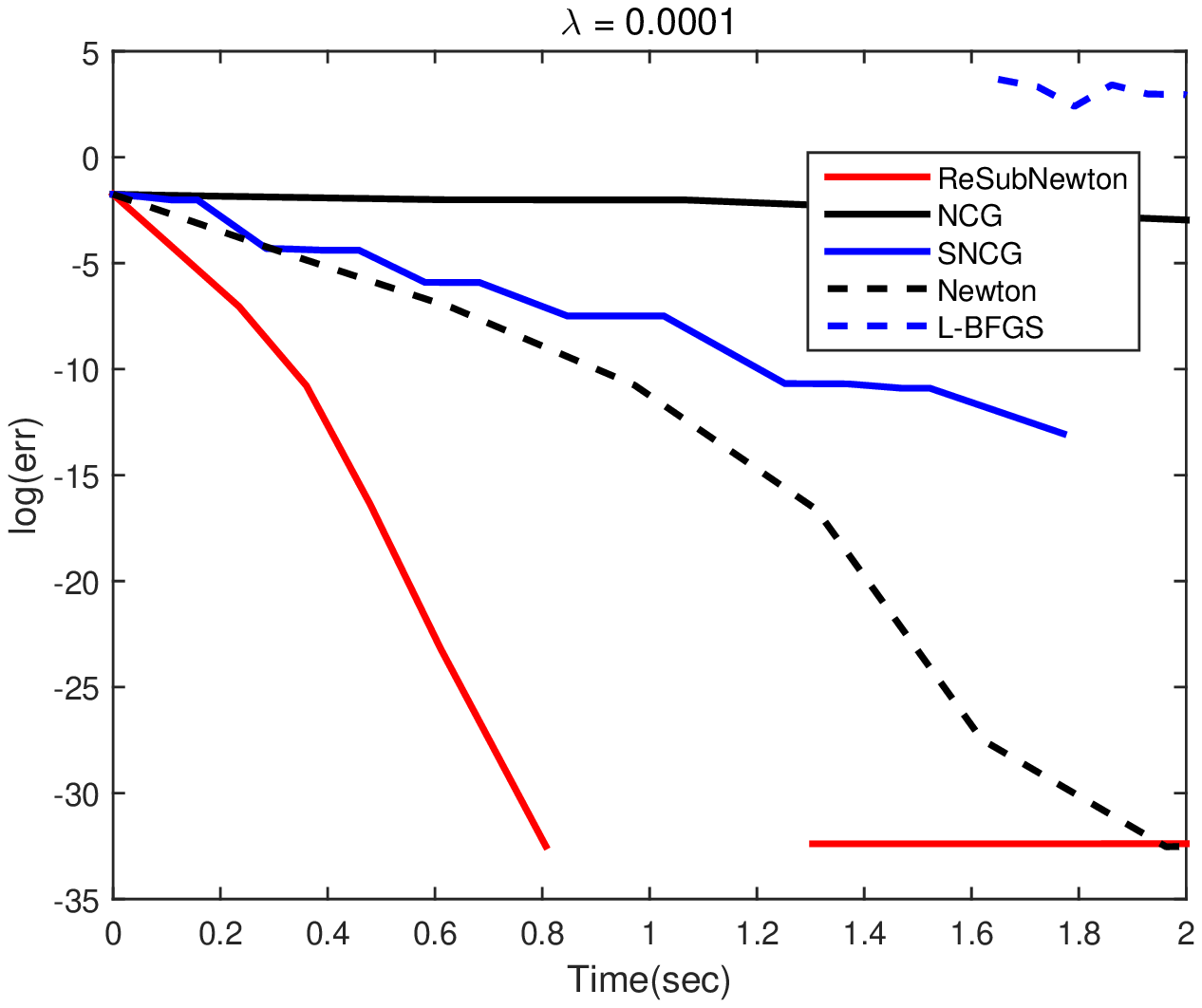}}~
			\subfigure{\includegraphics[width=45mm]{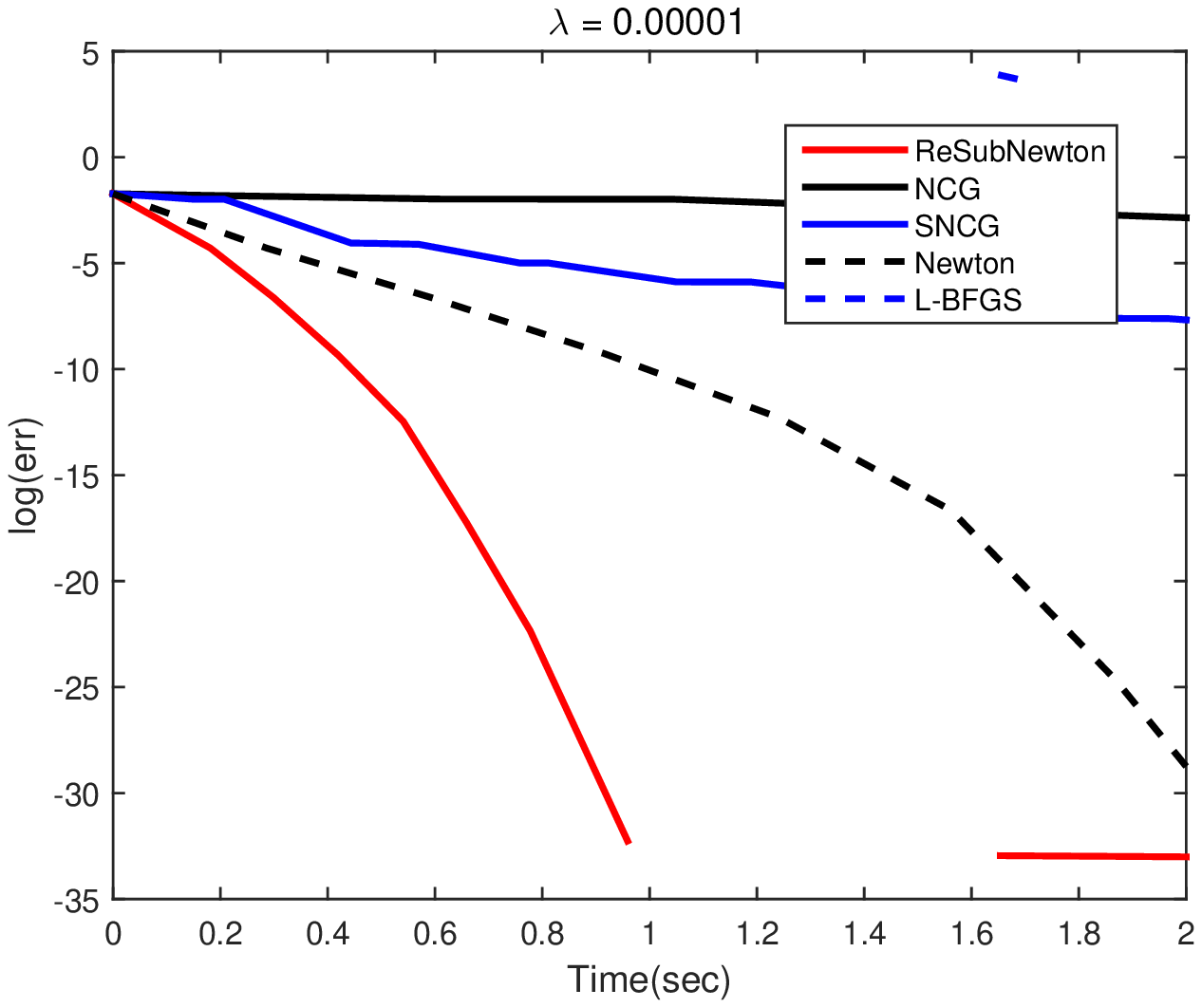}}\\
			\subfigure{\includegraphics[width=45mm]{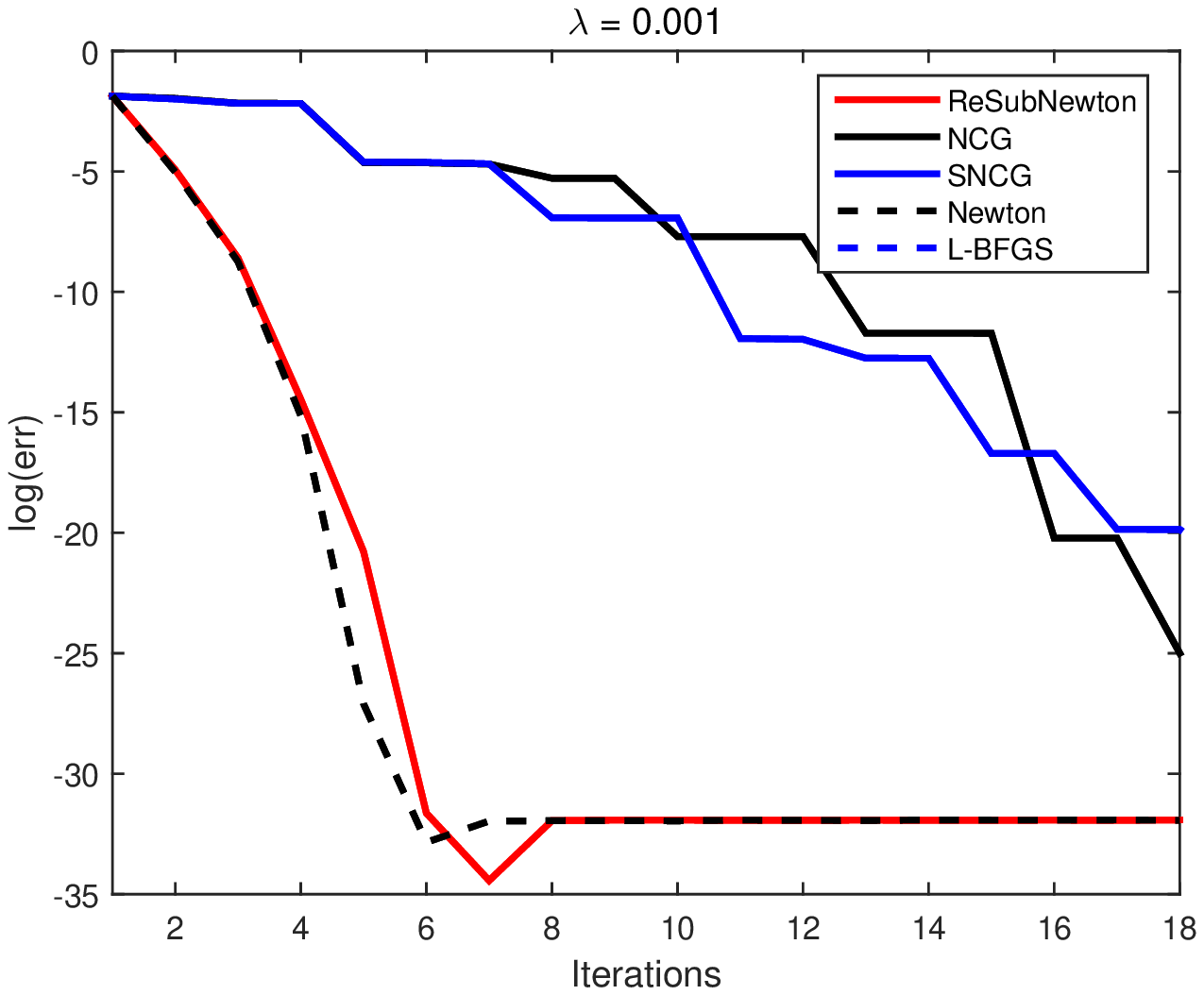}}~
			\subfigure{\includegraphics[width=45mm]{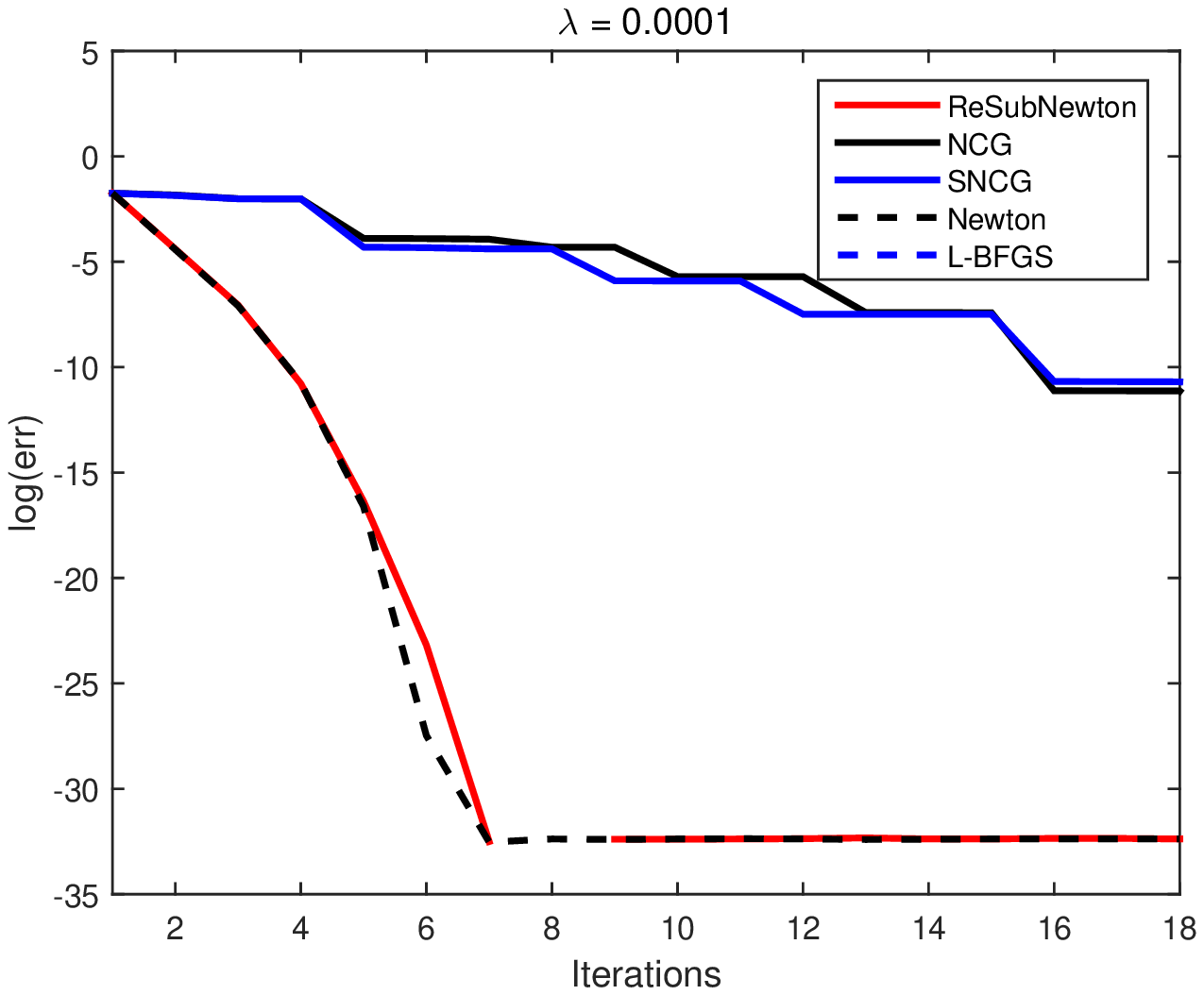}}~
			\subfigure{\includegraphics[width=45mm]{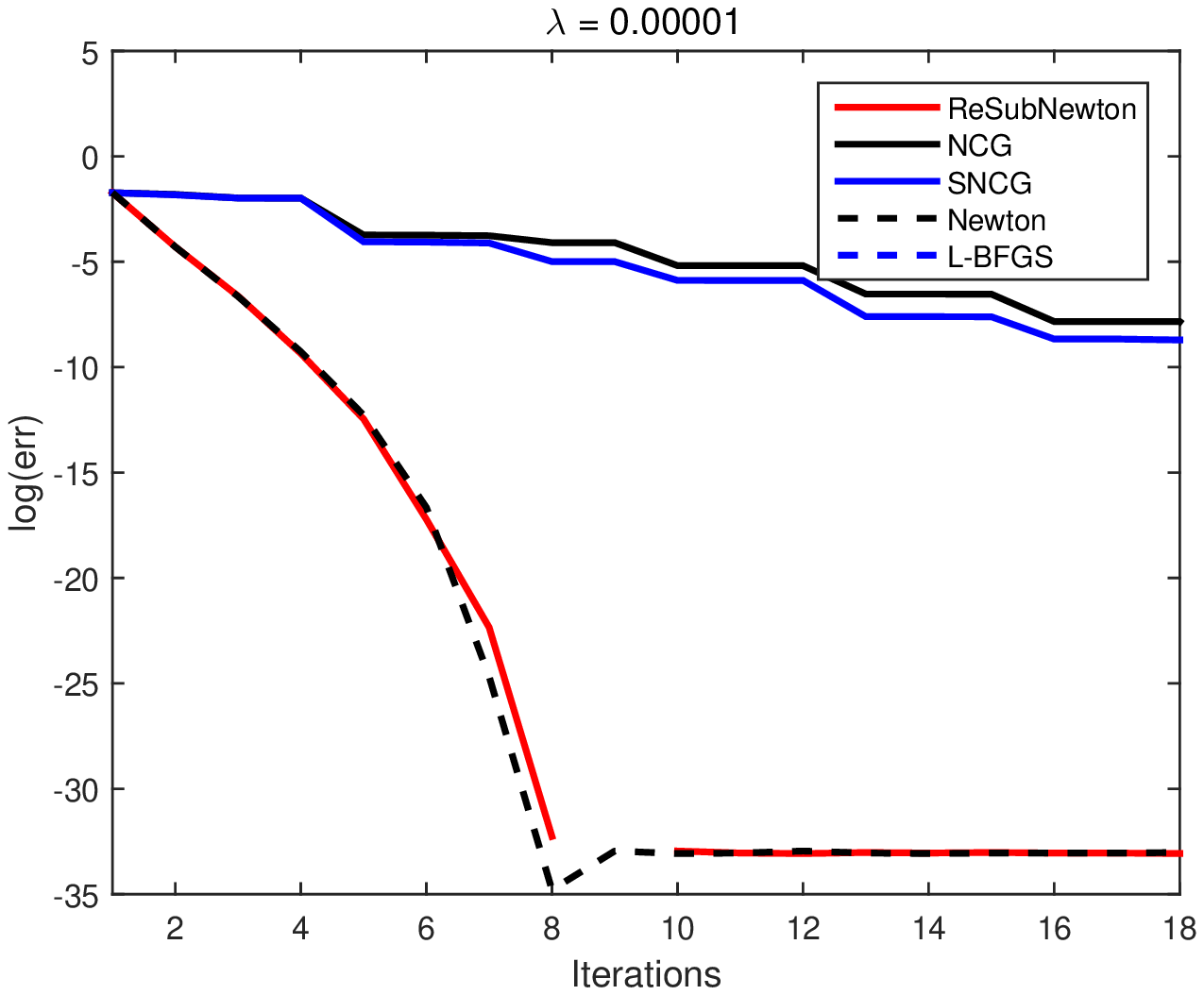}}
		\end{center}
		\caption{Experiment on 'covertype' with different $\lambda$}
		\label{fig:ctp}
	\end{figure}
	\begin{figure}[!ht]
		\subfigtopskip = 0pt
		\begin{center}
			\centering
			\subfigure{\includegraphics[width=45mm]{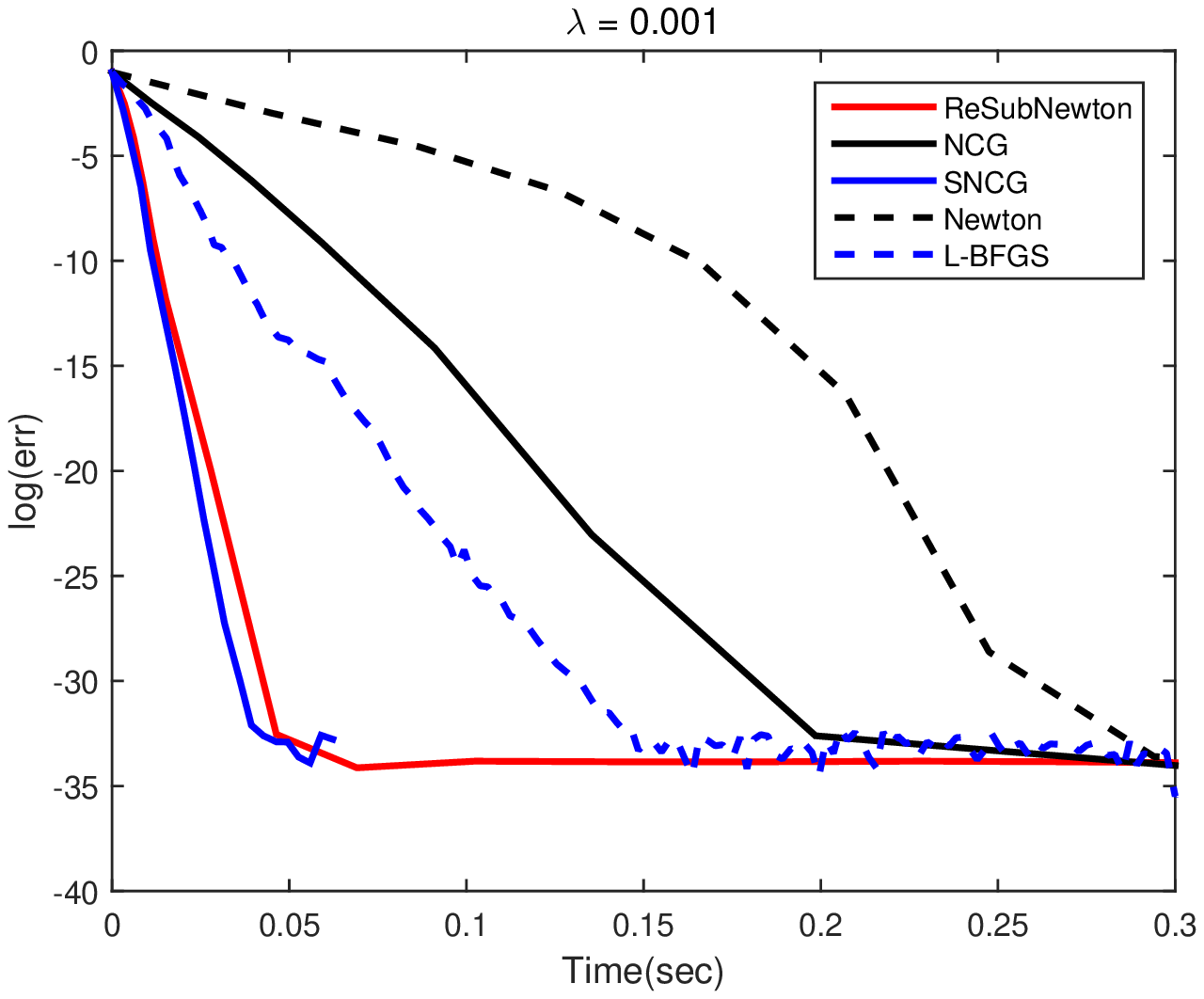}}~
			\subfigure{\includegraphics[width=45mm]{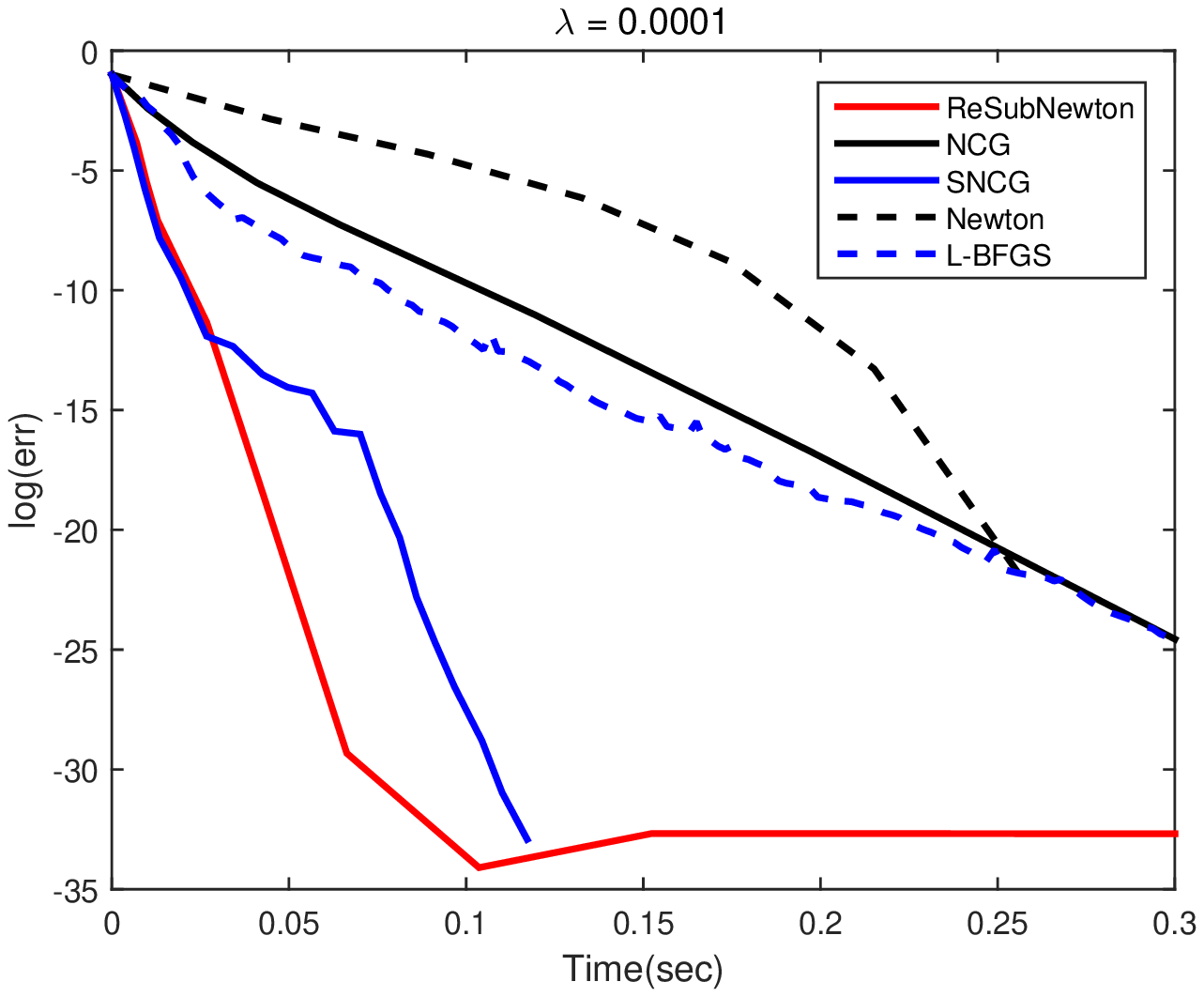}}~
			\subfigure{\includegraphics[width=45mm]{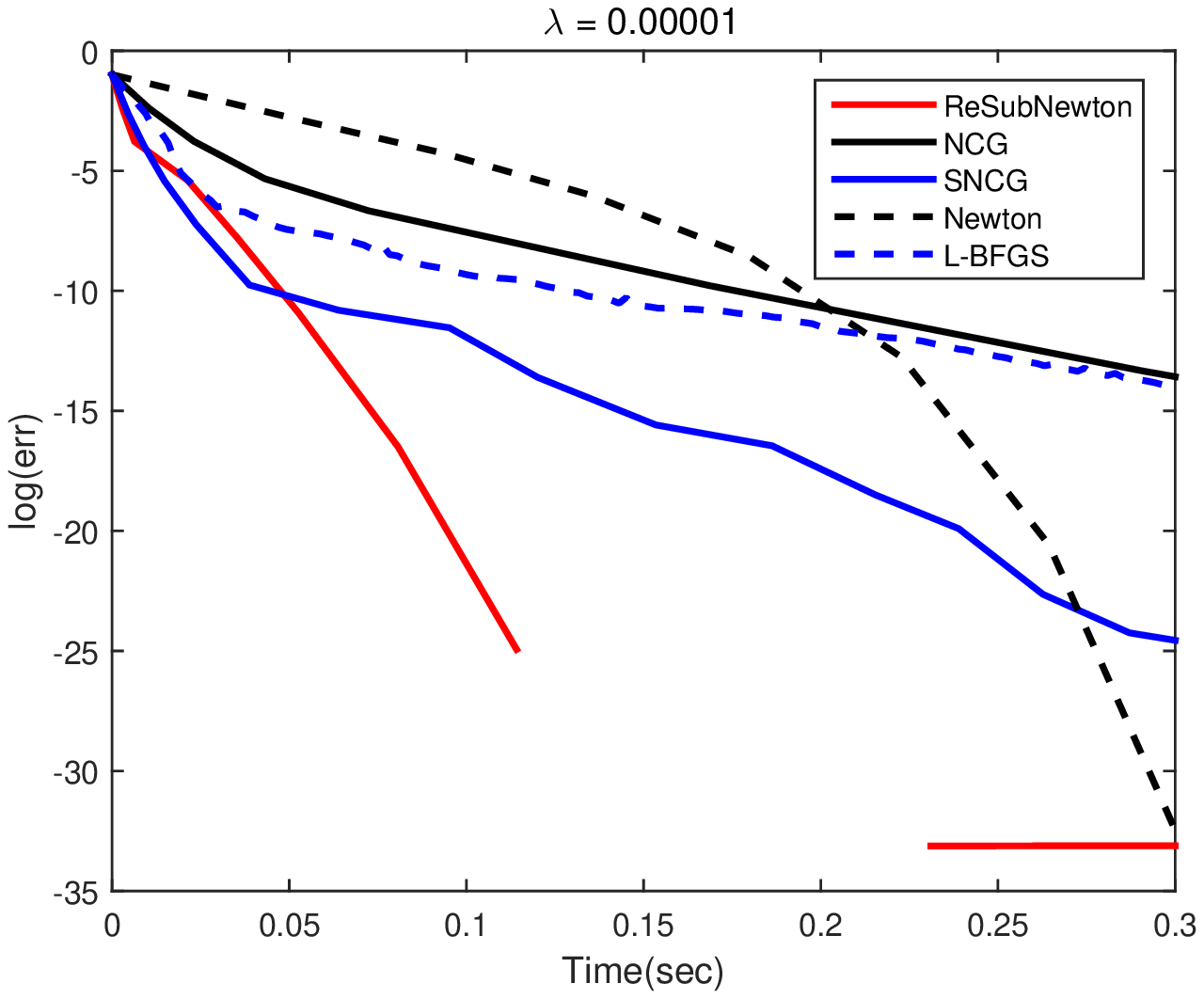}}\\
			\subfigure{\includegraphics[width=45mm]{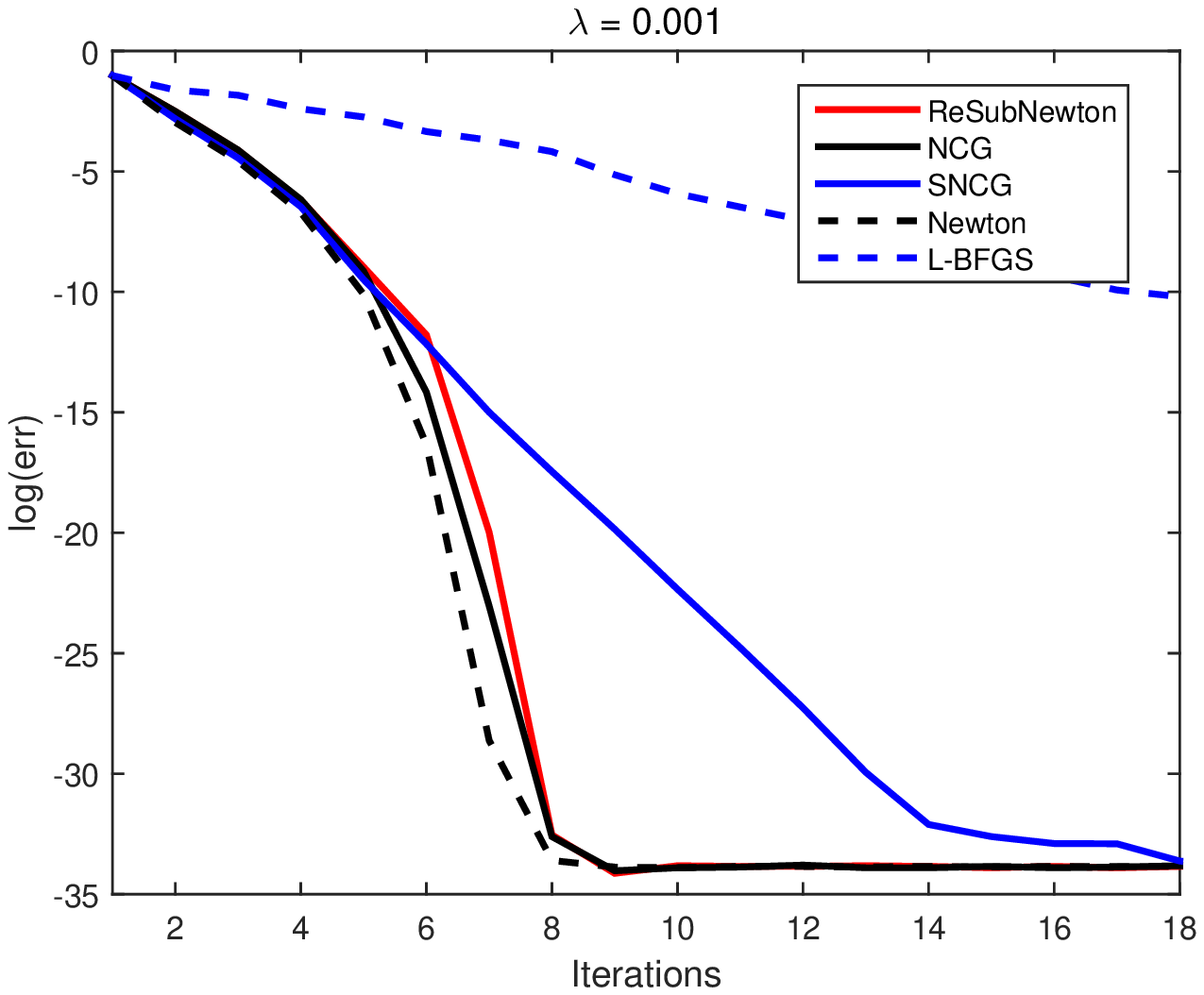}}~
			\subfigure{\includegraphics[width=45mm]{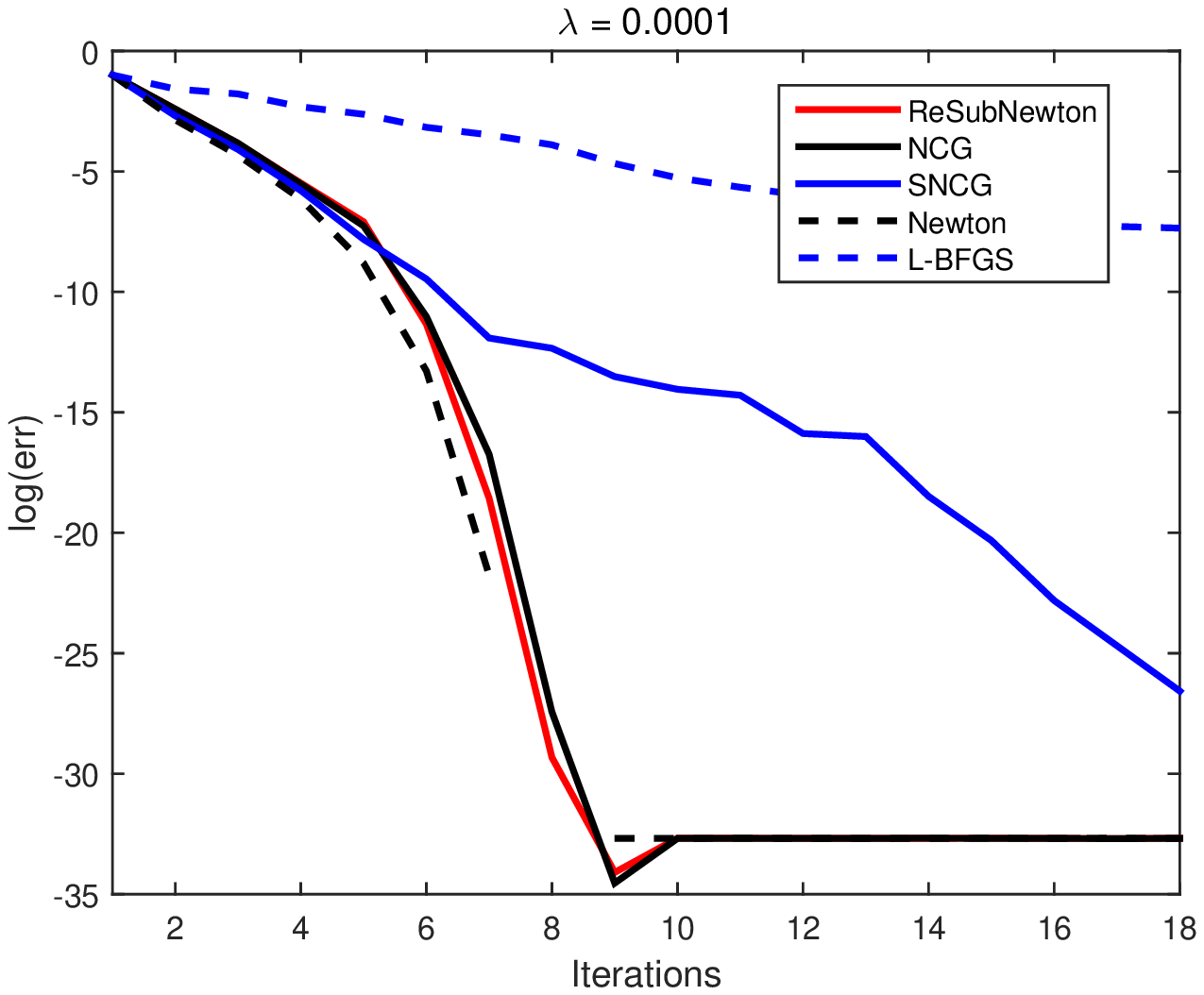}}~
			\subfigure{\includegraphics[width=45mm]{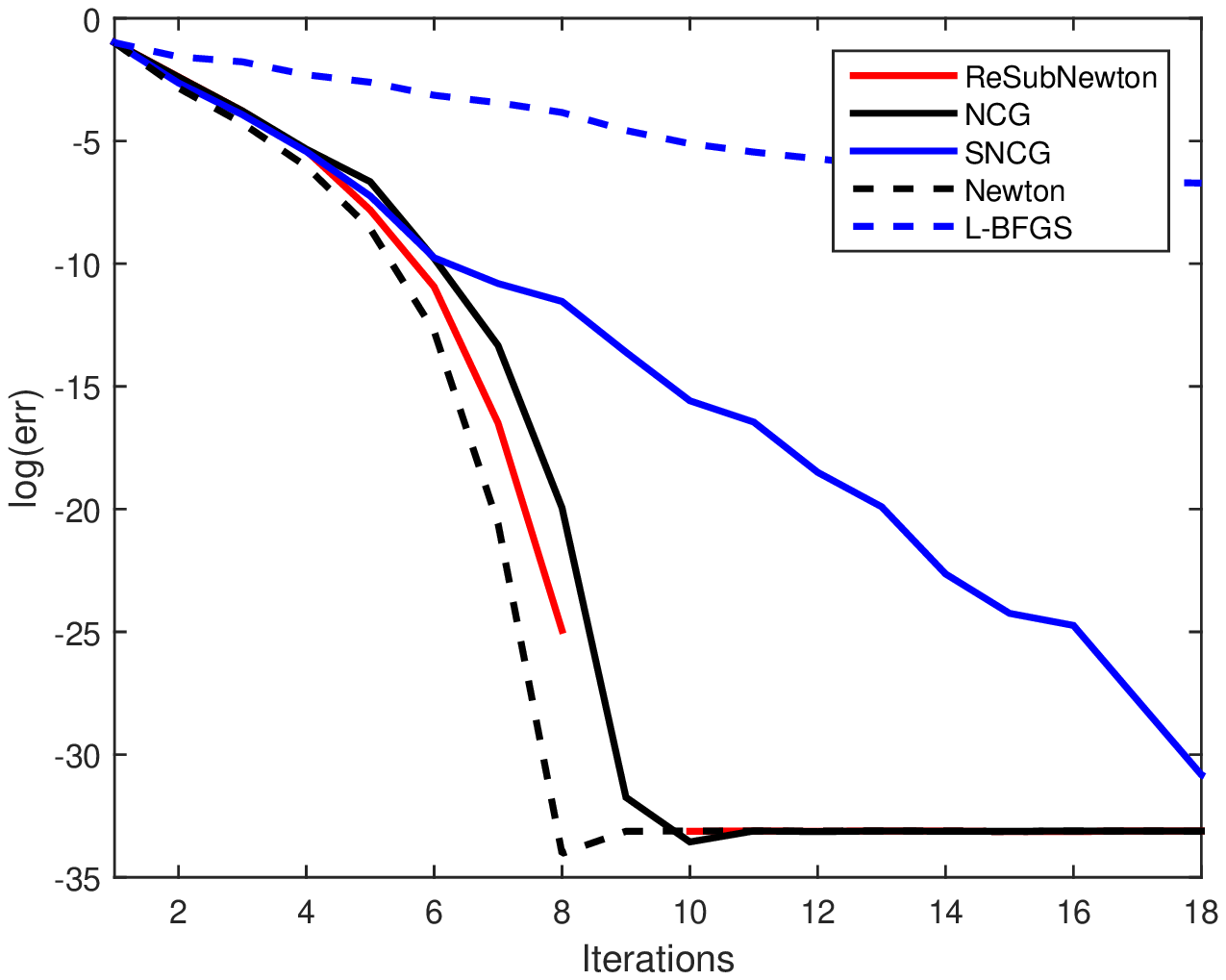}}
		\end{center}
		\caption{Experiment on 'a9a' with different $\lambda$}
		\label{fig:a9a}
	\end{figure}
	\begin{figure}[!ht]
		\subfigtopskip = 0pt
		\begin{center}
			\centering
			\subfigure{\includegraphics[width=45mm]{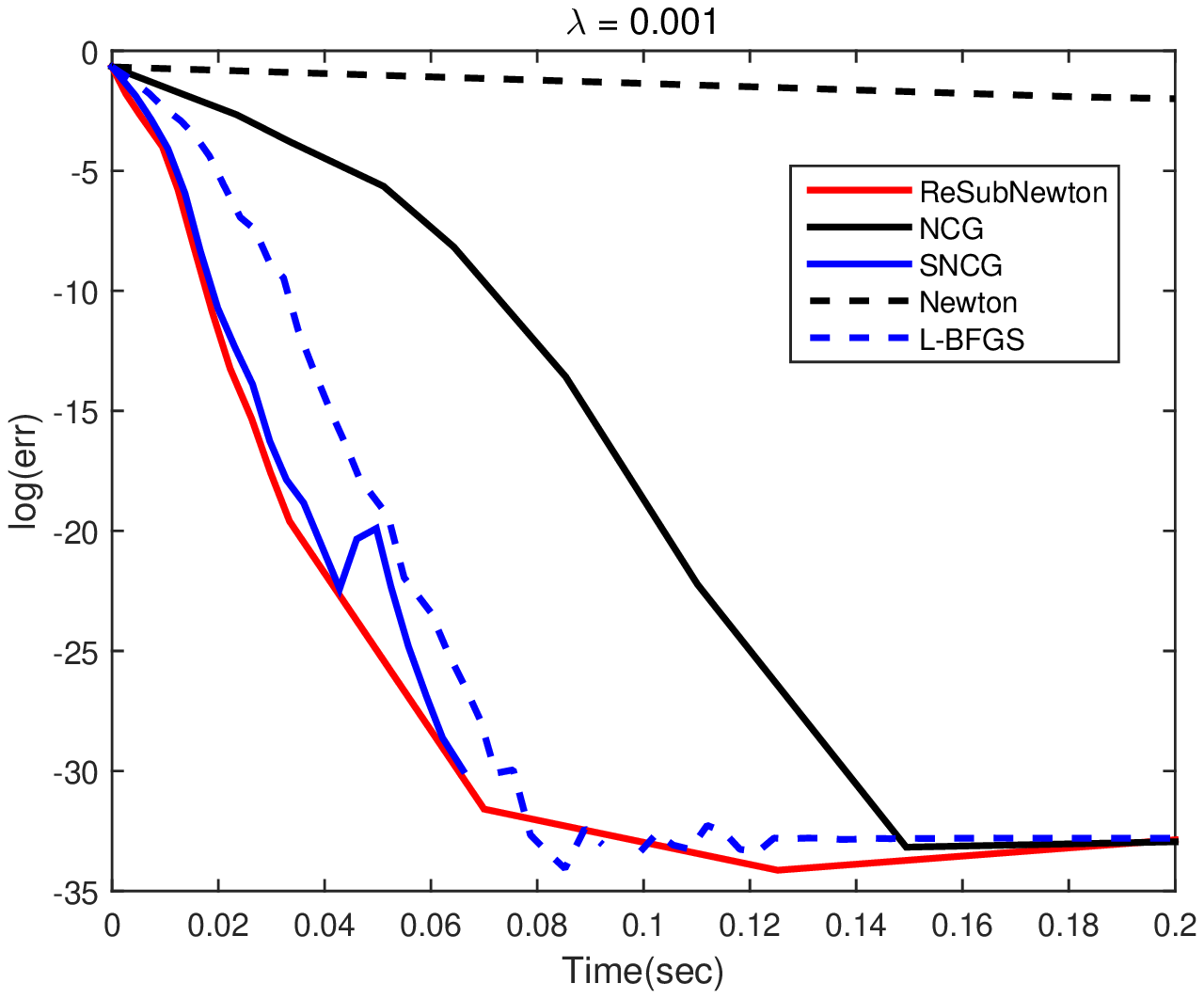}}~
			\subfigure{\includegraphics[width=45mm]{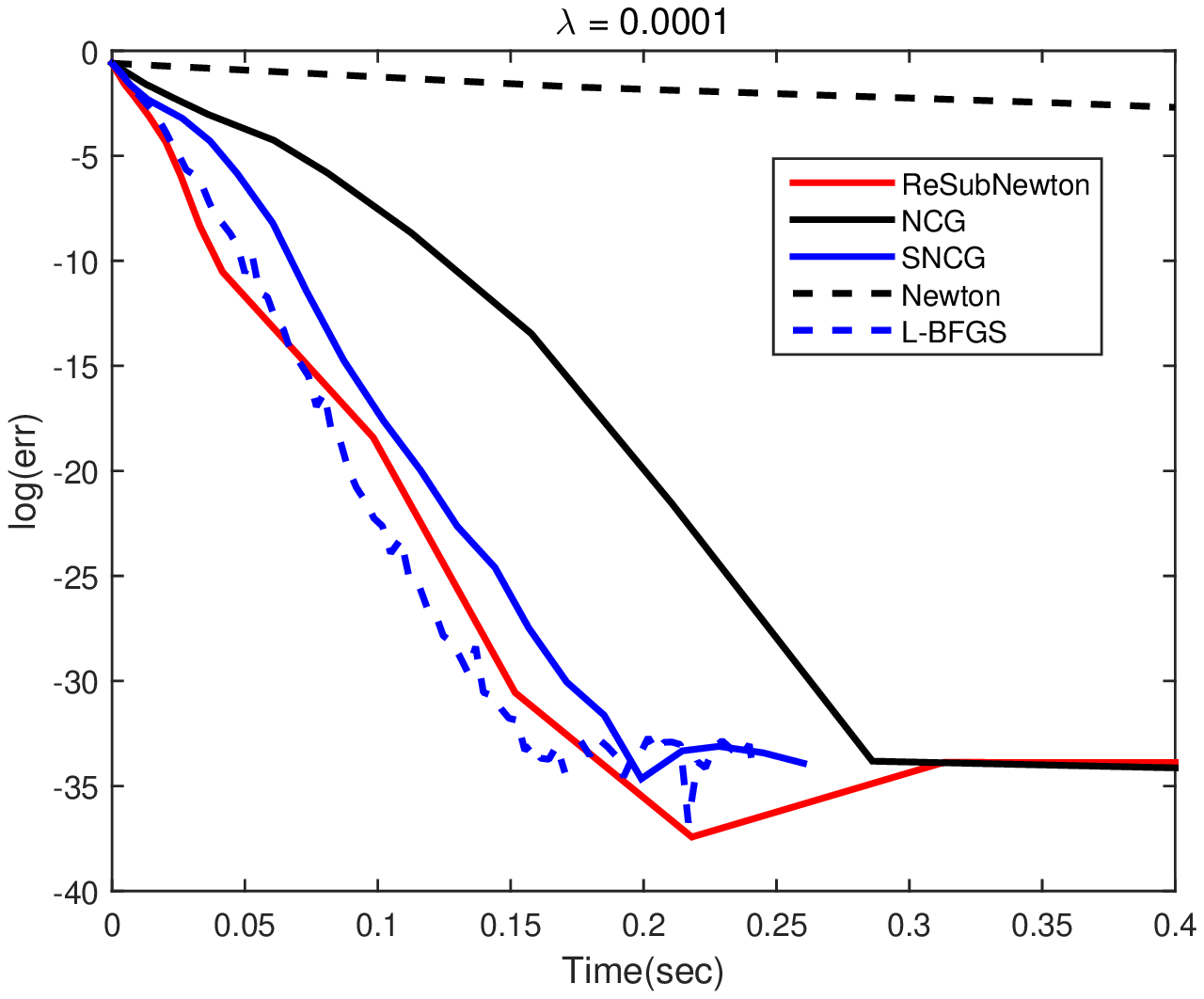}}~
			\subfigure{\includegraphics[width=45mm]{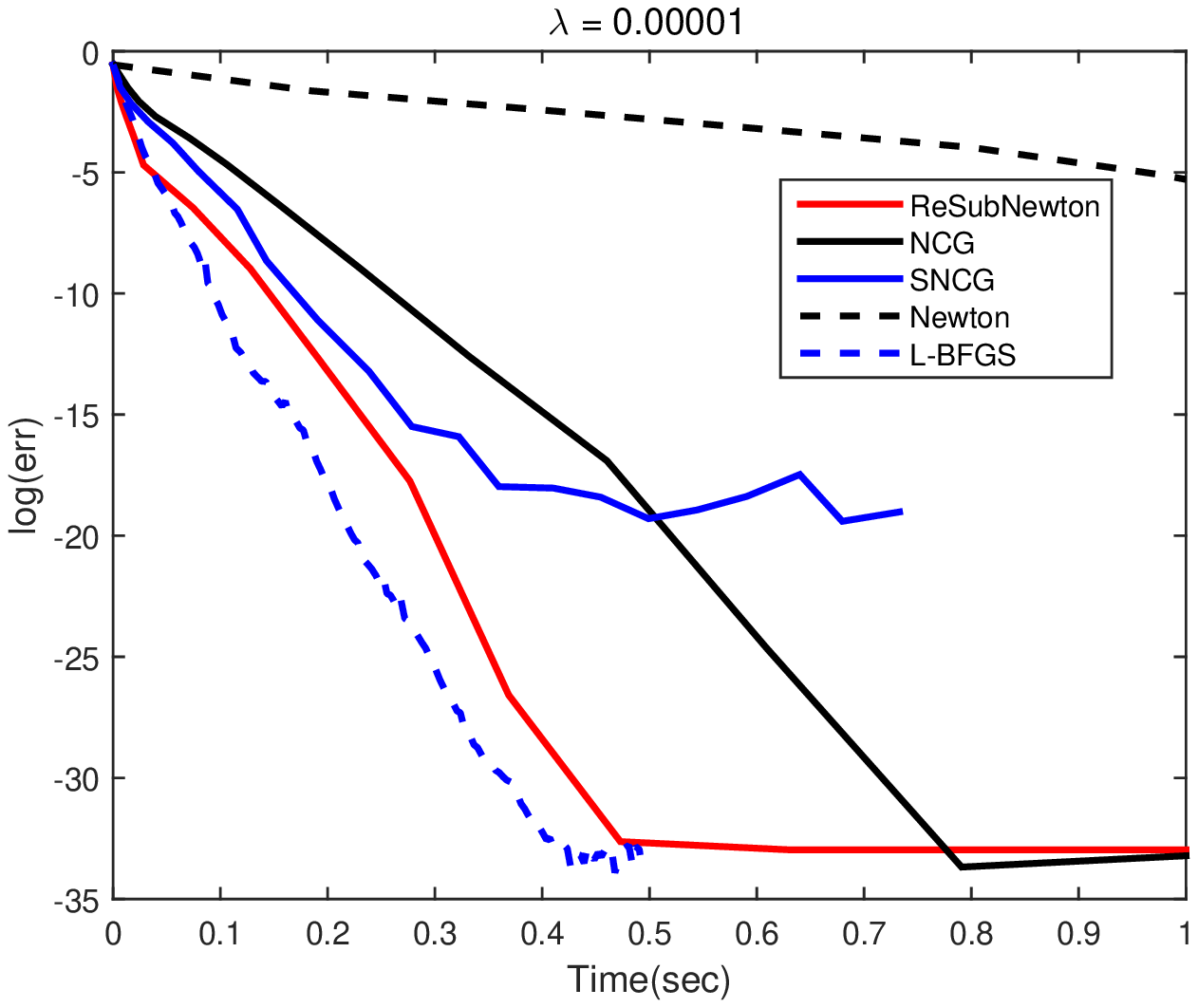}}\\
			\subfigure{\includegraphics[width=45mm]{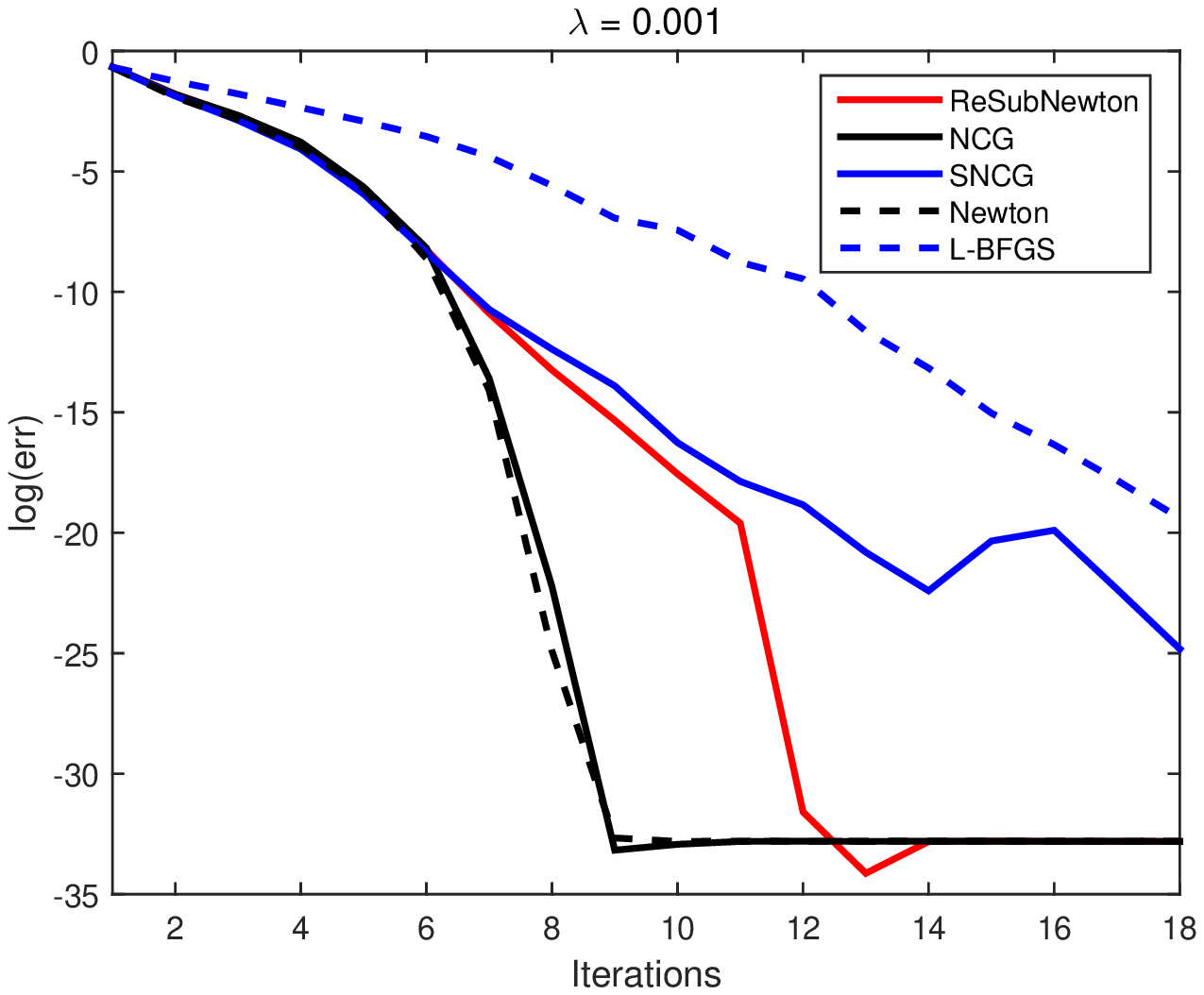}}~
			\subfigure{\includegraphics[width=45mm]{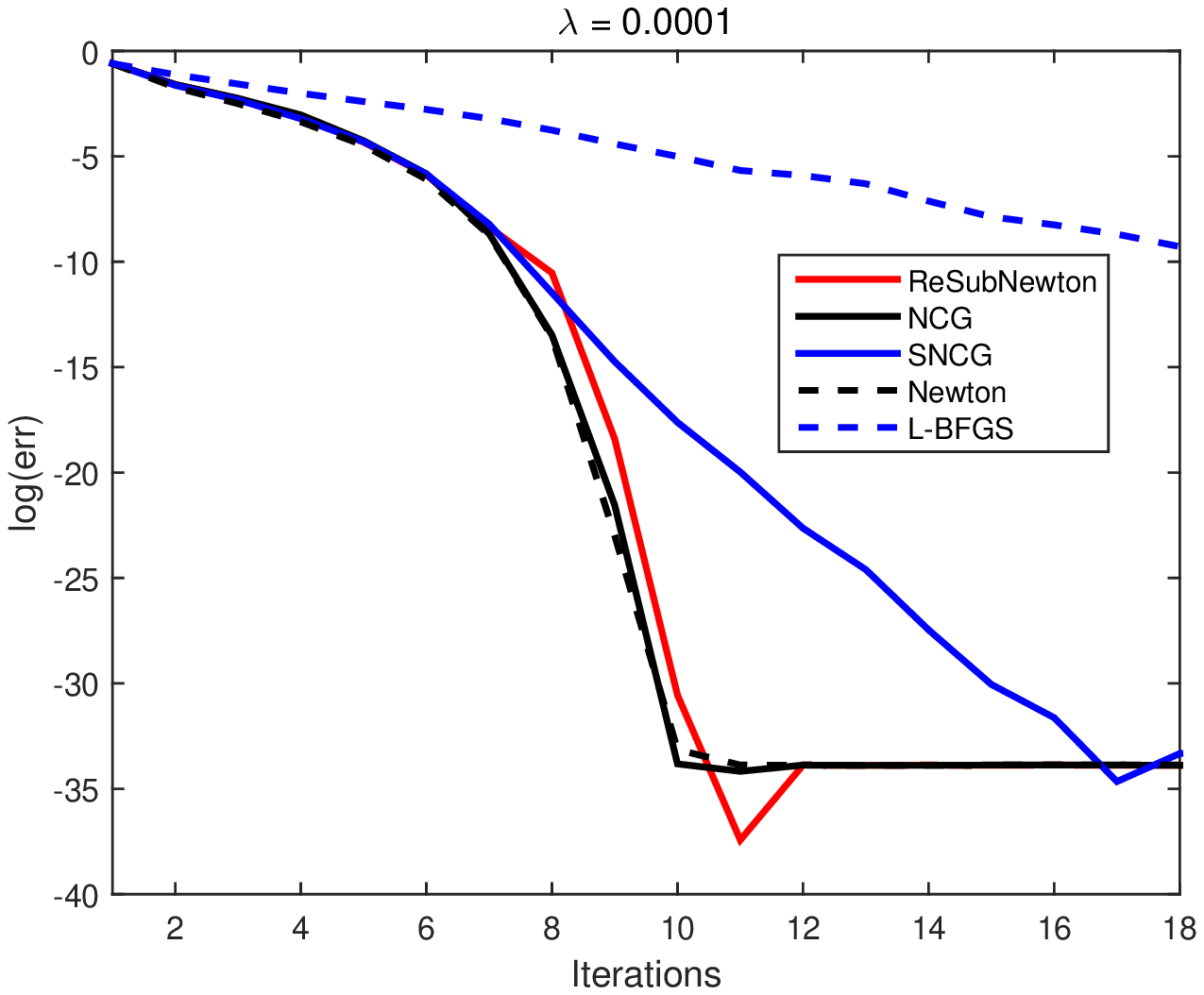}}~
			\subfigure{\includegraphics[width=45mm]{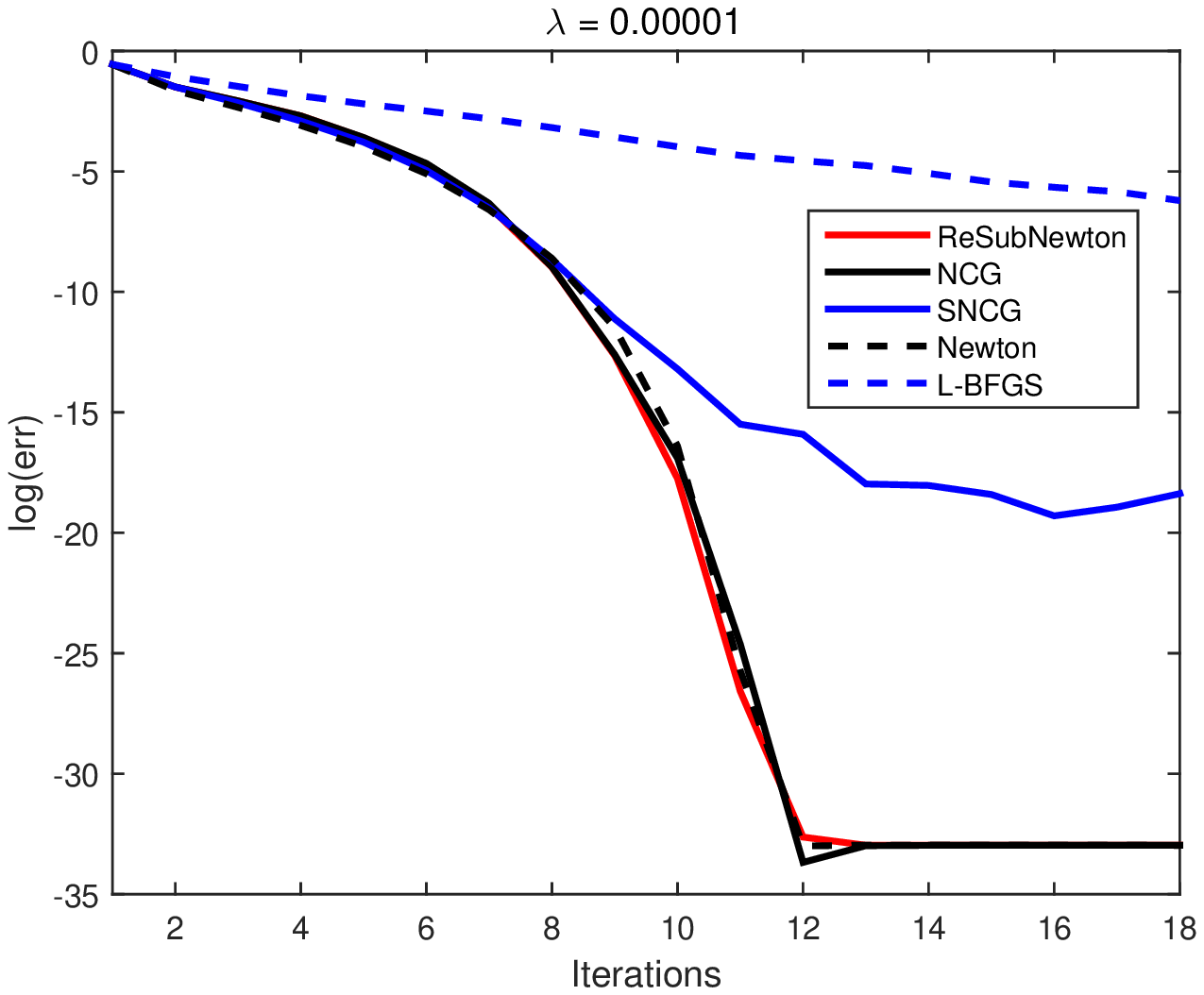}}
		\end{center}
		\caption{Experiment on 'w8a' with different $\lambda$}
		\label{fig:w8a}
	\end{figure}
	
	\section{Conclusion}
	
	In this paper we have proposed two novel sub-sampled Newton methods  called ReSubNewton and ReSkeNewton. They are the first practical sub-sampled Newton method which can achieve superlinear and quadratic convergence rate. We have developed a more general proof framework from a perspective of inexact Newton, which unifies several existing sub-sampled Newton methods. The framework is a fundamental of convergence analysis of sub-sampled Newton methods. Accordingly, we have shown several new convergence properties of sub-sampled Newton methods, which are important both in theory and real application. The empirical studies have validated the efficiency of our algorithms. Our work would be potentially useful for sub-sampled Newton methods.  
	
	\appendix
	
	\section{Some Important Lemmas}
	\begin{lemma} \label{lem:matrix_bnd}
		If \eqref{eq:k} and \eqref{eq:sigma} hold and letting $0<\delta<1$, $0<\epsilon<1$ and $0<c$ be given, besides, the sample size  $|\mathcal{S}| \geq \max({\frac{16K^2\log(2p/\delta)}{c^2 \epsilon^2},\frac{K\log(2p/\delta)}{\sigma \epsilon^2}})$ and $H^{(t)} = \frac{1}{|\SM|}\sum_{j\in\mathcal{S}}\nabla^2 f_j(x^{(t)})$, then we have the following properties:
		\begin{align*}
		&\|H^{(t)} - \nabla^{2}F(x^{(t)})\| \leq \epsilon c,\\
		&\lambda_{\min}(H^{(t)}) \geq (1-\epsilon) \sigma.
		\end{align*}
	\end{lemma}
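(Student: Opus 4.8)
The plan is to recognize $H^{(t)}$ as an empirical average of independent, identically distributed random matrices and to control its deviation from its mean $\nabla^2 F(x^{(t)})$ by matrix concentration inequalities. Writing $M_j = \nabla^2 f_j(x^{(t)})$ for the summands drawn in forming $\SM$, convexity of each $f_i$ gives $M_j \succeq 0$, and assumption~\eqref{eq:k} gives $\|M_j\| \le K$, so $0 \preceq M_j \preceq K\I$. Moreover $\EB[M_j] = \frac1n\sum_{i=1}^n \nabla^2 f_i(x^{(t)}) = \nabla^2 F(x^{(t)}) =: \Si$, and assumption~\eqref{eq:sigma} gives $\sigma\I \preceq \Si \preceq K\I$. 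These structural facts --- boundedness and positive semidefiniteness of the summands --- are exactly what the two matrix tail bounds require, and they are responsible for the two separate terms appearing in the sample-size threshold.

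For the first inequality I would apply the matrix Bernstein inequality to the centered sum $H^{(t)} - \Si = \frac{1}{|\SM|}\sum_j (M_j - \Si)$. Each centered term obeys $\|M_j - \Si\| \le K$, and the per-term matrix variance is controlled by the estimate $\EB[(M_j-\Si)^2] = \EB[M_j^2] - \Si^2 \preceq \EB[M_j^2] \preceq K\,\EB[M_j] = K\Si$, where I used $M_j^2 \preceq K M_j$; hence the variance proxy is at most $K\|\Si\| \le K^2$. Matrix Bernstein then yields a tail of the form $\PB(\|H^{(t)} - \Si\| \ge t) \le 2p\exp\!\big(-\Theta(|\SM|\,t^2/K^2)\big)$. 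Setting $t = \epsilon c$ and forcing the right-hand side below $\delta$ yields the threshold $|\SM| \ge \tfrac{16K^2\log(2p/\delta)}{c^2\epsilon^2}$ (the factor $2p$ being the dimension factor in Bernstein, the $16$ coming from the variance-dominated regime), which establishes $\|H^{(t)} - \nabla^2 F(x^{(t)})\| \le \epsilon c$.

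For the minimum-eigenvalue bound I would \emph{not} route through the operator-norm estimate with $c=\sigma$ (that would cost a wasteful $K^2/\sigma^2$ samples); instead I would apply the matrix Chernoff lower-tail bound directly to the PSD sum $\sum_j M_j$. With summands bounded by $K$ and $\lambda_{\min}\!\big(\EB[\sum_j M_j]\big) = |\SM|\,\sigma$, the Chernoff bound gives $\PB\big(\lambda_{\min}(H^{(t)}) \le (1-\epsilon)\sigma\big) \le p\,\big[e^{-\epsilon}/(1-\epsilon)^{1-\epsilon}\big]^{|\SM|\sigma/K}$; applying the standard simplification $e^{-\epsilon}/(1-\epsilon)^{1-\epsilon} \le e^{-\epsilon^2/2}$ and requiring the bound below $\delta$ produces the second threshold $|\SM| \ge \tfrac{K\log(2p/\delta)}{\sigma\epsilon^2}$ (up to the stated constant) together with the conclusion $\lambda_{\min}(H^{(t)}) \ge (1-\epsilon)\sigma$. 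Taking $|\SM|$ to be the maximum of the two thresholds makes both events hold simultaneously.

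The two tail bounds are standard, so the main obstacle is bookkeeping rather than ideas: obtaining the sharp variance estimate $\EB[(M_j-\Si)^2] \preceq K\Si$ so that the first threshold scales like $K^2$ and not worse, matching the numerical constant and the $\log(2p/\delta)$ dimension factor the statement commits to, and confirming that the sampling model (independent draws, so that Bernstein and Chernoff apply verbatim, or a without-replacement variant) is consistent with how $\SM$ is formed in the algorithm.
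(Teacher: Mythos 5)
Your proposal follows essentially the same route as the paper's proof: model the sampled Hessians as i.i.d.\ random matrices with mean $\nabla^2 F(x^{(t)})$, apply the matrix Bernstein inequality to the centered sum to get $\|H^{(t)} - \nabla^2 F(x^{(t)})\| \le \epsilon c$ under the first sample-size threshold, and apply the matrix Chernoff lower-tail bound to the PSD sum to get $\lambda_{\min}(H^{(t)}) \ge (1-\epsilon)\sigma$ under the second. Your bookkeeping is in fact slightly sharper than the paper's (you bound $\|M_j - \Si\| \le K$ and compute the variance proxy $\EB[(M_j-\Si)^2] \preceq K\Si$, whereas the paper uses the cruder bounds $\|X_j\|\le 2K$, $\|X_j\|^2 \le 4K^2$), but the argument is the same.
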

	\begin{proof}
		Consider $|\SM|$ i.i.d random matrces $H_j^{(t)}, j = 1,\dots,|\SM|$ such that $\PB(H_j^{(t)} = \nabla^{2}f_i(x^{(t)}) = 1/n$ for all $i = 1,\dots,n$. Then, we have $\EB(H_j^{(t)}) = \nabla^{2}F(x^{(t)})$ for all $j = 1,\dots,|\SM|$. By \eqref{eq:k} and the positive semi-definite property of $H_j^{(t)}$, we have $\lambda_{\max}(H_j^{(t)}) \leq K$ and $\lambda_{\min}(H_j^{(t)}) \geq 0$. By Matrix Chernoff bound, we have that if $|\SM| \geq \frac{K\log (p/\delta)}{\sigma\epsilon^{2}}$, $\lambda_{\min}(H^{(t)}) \geq (1-\epsilon) \sigma$ holds with probability at least $1-\delta$.
		
		We define random maxtrices $X_j = H_j^{(t)} - \nabla^{2}F(x^{(t)})$ for all $j = 1,\dots,|\SM|$.	We have $\EB[X_j] = 0$, $\|X_j\| \leq 2K$ and $\|X_j\|^2 \leq 4K^2$. By Matrix Bernstein, we have
		\[
		\PB(\|H^{(t)} - \nabla^{2}F(x^{(t)})\|\geq \epsilon c) \leq 2p\exp^{-\frac{c^2\epsilon^2|\SM|}{16K^2}}.
		\]
		When $|\SM| \geq \frac{16K^2\log(2p/\delta)}{c^2 \epsilon^2}$, $\|H^{(t)} - \nabla^{2}F(x^{(t)})\| \leq \epsilon c$ holds with probability at least $1-\delta$.
	\end{proof}
	
	\begin{lemma}[\cite{Spielman}]\label{lem:refine}
		If $A, B$ are $p\times p$ symmetric nonsingular matrix, and $(1-\epsilon)B \preceq A \preceq (1+\epsilon) B$, where $0<\epsilon<1$, then for the optimization problem $\min_{x}\|Ax-b\|$, we have
		\[
		\|x^{1}-x^{*}\|_{A} \leq \epsilon \|x^{*}\|_A,
		\]
		where $x^{*} = A^{-1}b$ and $x^{1} = B^{-1}b$. Besides, if we set $r^{(t)} = Ax^{(t)} - b$, $x^{(t)}_r = B^{-1}r^{(t)}$ and $x^{(t+1)} = x^{(t)}+x^{(t)}_r$, then $\|x^{(t+1)} -x^{*}\|_A \leq \epsilon^{t+1}\|x^{*}\|_A$.
	\end{lemma}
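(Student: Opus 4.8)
The plan is to reduce everything to a single per-step contraction estimate in the $A$-norm and then iterate. Writing $e^{(t)} = x^{(t)} - x^*$ and using $b = A x^*$, the residual is $r^{(t)} = A x^{(t)} - b = A e^{(t)}$, so the refinement step (read so that the preconditioned residual corrects the current error, i.e. $e^{(t+1)} = (I - B^{-1}A)e^{(t)}$) gives the error recursion governed by the single matrix $M \triangleq I - B^{-1}A$. The whole lemma then follows once I show $\|M\|_A \triangleq \sup_{e\neq 0}\|Me\|_A/\|e\|_A \leq \epsilon$: the first claim is the base case with $x^{(0)} = 0$ (so that $e^{(0)} = -x^*$, $x^{(1)} = B^{-1}b$, and $\|e^{(0)}\|_A = \|x^*\|_A$), and the general claim is immediate induction, $\|e^{(t+1)}\|_A \leq \epsilon \|e^{(t)}\|_A \leq \epsilon^{t+1}\|x^*\|_A$.

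The key step is to translate the $A$-norm operator norm into an ordinary spectral norm. Since $A \succ 0$ near $x^*$, I would substitute $y = A^{1/2} e$; then $\|Me\|_A = \|A^{1/2} M e\|_2$ and $\|e\|_A = \|A^{1/2} e\|_2 = \|y\|_2$, so that
\[
\|M\|_A = \|A^{1/2} M A^{-1/2}\|_2 = \|A^{1/2}(I - B^{-1}A)A^{-1/2}\|_2 = \|I - A^{1/2} B^{-1} A^{1/2}\|_2.
\]
Setting $N \triangleq A^{1/2} B^{-1} A^{1/2}$, which is symmetric positive definite, the task becomes the clean eigenvalue statement that all eigenvalues of $N$ lie in $[1-\epsilon, 1+\epsilon]$, since this forces every eigenvalue of $I - N$ into $[-\epsilon,\epsilon]$ and hence $\|I - N\|_2 \leq \epsilon$.

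To locate the spectrum of $N$ I would use the Loewner hypothesis $(1-\epsilon)B \preceq A \preceq (1+\epsilon)B$. Note $N$ is similar to $B^{-1}A$ (via $B^{-1}A = A^{-1/2} N A^{1/2}$), so its eigenvalues coincide with the generalized eigenvalues of the pencil $A v = \lambda B v$. Because $B \succ 0$, these are exactly the values of the generalized Rayleigh quotient $v^T A v / v^T B v$, and the two-sided sandwich says precisely that $1-\epsilon \leq v^T A v / v^T B v \leq 1+\epsilon$ for all $v \neq 0$. Hence $\mathrm{spec}(N) \subseteq [1-\epsilon, 1+\epsilon]$, which closes the argument.

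I expect the main obstacle to be the reduction in the second paragraph: one must resist identifying $\|M\|_A$ with the spectral norm of $B^{-1}A$ directly (that matrix is generally non-symmetric, so its spectral norm need not be controlled by its eigenvalues), and instead keep the symmetric object $A^{1/2}B^{-1}A^{1/2}$ whose $2$-norm genuinely equals its largest eigenvalue. The remaining care is bookkeeping of the sign in the refinement step so that the correction decreases the error; with the recursion $e^{(t+1)} = (I - B^{-1}A)e^{(t)}$ everything else is routine induction.
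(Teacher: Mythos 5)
Your proof is correct. There is nothing in the paper to compare it against: the paper states this lemma with a citation to \cite{Spielman} and gives no proof, so your argument supplies the missing self-contained derivation, and it is the standard contraction argument for preconditioned Richardson iteration. The two substantive steps are both sound: measuring the contraction in the $A$-norm via $\|I - B^{-1}A\|_A = \|I - A^{1/2}B^{-1}A^{1/2}\|_2$ (rather than trying to control the non-symmetric matrix $I - B^{-1}A$ by its eigenvalues in the Euclidean norm), and locating the spectrum of the symmetric matrix $N = A^{1/2}B^{-1}A^{1/2}$ in $[1-\epsilon,1+\epsilon]$: every eigenvalue of $N$ equals $v^{T}Av/v^{T}Bv$ at a corresponding eigenvector $v$ of the pencil $Av = \lambda Bv$, and the Loewner sandwich bounds exactly this quotient, so $\|I-N\|_2 \leq \epsilon$. (Your phrasing that the eigenvalues are ``exactly the values of the generalized Rayleigh quotient'' should be read as ``attained values at eigenvectors''; that is all the argument needs.) You also correctly identify and repair a genuine sign inconsistency in the statement: with $r^{(t)} = Ax^{(t)} - b$ and $x^{(t+1)} = x^{(t)} + B^{-1}r^{(t)}$ as literally written, the error recursion is $e^{(t+1)} = (I + B^{-1}A)e^{(t)}$, which diverges; the intended update is $x^{(t+1)} = x^{(t)} - B^{-1}r^{(t)}$ (equivalently, define $r^{(t)} = b - Ax^{(t)}$), and the same slip appears in the inner loops of Algorithms~\ref{alg:H_subsamp_iter} and~\ref{alg:Sketch_Newton_iter}. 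Two minor points: ``since $A \succ 0$ near $x^*$'' is a stray phrase ($A$ is a fixed matrix here; positive definiteness is not an extra assumption but follows from the hypothesis, since $2\epsilon B = (1+\epsilon)B - (1-\epsilon)B \succeq 0$ together with nonsingularity forces $B \succ 0$, whence $A \succeq (1-\epsilon)B \succ 0$), and your choice of base case $x^{(0)} = 0$ is the right reading of the second claim, since it makes $x^{(1)} = B^{-1}b = x^{1}$ and produces the exponent $\epsilon^{t+1}$ exactly as stated.
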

	\begin{algorithm}[tb]
		\caption{NewSamp.}
		\label{alg:NewSamp}
		\begin{small}
			\begin{algorithmic}[1]
				\STATE {\bf Input:} $x^{(0)}$, $r$, $0<\epsilon<1$, $\{\eta^{(t)}, |\SM^{(t)}|\}$;
				\FOR {$t=0,1,\dots$ until termination }
				\STATE Select a sample set $\SM^{(t)}$, of size $|\SM^{(t)}|$ and get $H_{|\SM^{(t)}|} = \frac{1}{|\SM^{(t)}|}\sum_{j\in\mathcal{S}}\nabla^2 f_j(x^{(t)})$;
				\STATE Compute $r+1$ SVD deompostion of $H_{|\SM^{(t)}|}$ to get $U_{r+1}$ and $\Lambda_{r+1}$. Construct $H^{(t)} = \frac{1}{\eta^{(t)}}(U_r(\Lambda_{r} - \lambda_{r+1}I)U_r^T + \lambda_{r+1}I)$
				\STATE Update $x^{(t+1)}= x^{(t)}-[H^{(t)}]^{-1}\nabla F(x^{(t)})$;
				\ENDFOR
			\end{algorithmic}
		\end{small}
	\end{algorithm}
	
	\begin{algorithm}[tb]
		\caption{Regularized Sub-sample Newton.}
		\label{alg:reg_subsamp}
		\begin{small}
			\begin{algorithmic}[1]
				\STATE {\bf Input:} $x^{(0)}$, $0<\delta<1$, $0<\epsilon<1$, $\alpha$, sample size $|\SM|$ ;
				\FOR {$t=0,1,\dots$ until termination}
				\STATE Select a sample set $\SM$, of size $|\SM|$ and $H^{(t)} = \frac{1}{|\SM|}\sum_{j\in\mathcal{S}}\nabla^2 f_j(x^{(t)}) + \alpha I$;
				\STATE Update $x^{(t+1)}= x^{(t)}-[H^{(t)}]^{-1}\nabla F(x^{(t)})$;
				\ENDFOR
			\end{algorithmic}
		\end{small}
	\end{algorithm}
	
	\section{Proof of Theorem~\ref{thm:inexact_newton}}
	\begin{proof} {\bf{of Theorem~\ref{thm:inexact_newton}}}
		Since $\nabla^2F(x)$ is positive definite at $x^*$, it has
		\begin{align}
		\frac{1}{M}\|y\|\leq\|y\|_*\leq M\|y\|,\text{ for } y\in\RB^n, \label{eq:M_side}
		\end{align}
		where 
		\[
		M \equiv\max(\|\nabla^{2}F(x^*)\|,\|\nabla^{2}F(x^*)^{-1}\|).
		\]
		Because $\nabla^2F(x)$ is continuous near $x^*$, it holds that
		\begin{equation}
		\|[\nabla^{2}F(x^*)]^{-1} - [\nabla^{2}F(x)]^{-1}\| < \varepsilon, \label{eq:inv_close}
		\end{equation}
		\begin{equation}
		\|\nabla^{2}F(x^*) - \nabla^{2}F(x)\| < \eta \label{eq:eta_def}
		\end{equation}
		and
		\begin{equation*}
		\|\nabla F(x^{(t)}) - \nabla F(x^{*}) - \nabla^2F(x^{*})(x^{(t)} - x^*)\| \leq \eta \|x^{(t)} - x^*\|. \label{eq:tmp}
		\end{equation*}
		if $\|x-x^*\|\leq \delta$, where $\varepsilon = o(1)$ and $\eta = o(1)$. The equation~\eqref{eq:tmp} is equivalent to 
		\begin{equation}
		\|\nabla F(x^{(t)}) - \nabla^2F(x^{*})(x^{(t)} - x^*)\| \leq \eta \|x^{(t)} - x^*\|. \label{eq:tl}
		\end{equation}
		since $\nabla F(x^{*}) = 0$.
		
		By \eqref{eq:inv_close}, we can assume that 
		\begin{equation}
		\| [\nabla^{2}F(x^{t})]^{-1}\| \leq M \label{eq:M_def}
		\end{equation}
		for all $x^{(t)}$ sufficiently close to $x^*$.
		Therefore, we have from \eqref{eq:res_def} that the inexact Newton step satisfies
		\begin{equation}
		\|v^{(t)}\| = \|[\nabla^{2}F(x^{t})]^{-1}\|(\|r^{(t)}\|+\|\nabla F(x^{(t)})\|)  \leq 2M\|\nabla F(x^{(t)})\|
		\end{equation}
		where the second inequality is because $\|r^{(t)}\| \leq \gamma^{(t)}\|\nabla F(x^{(t)})\|$ and $\gamma^{(t)}<1$. Combining Taylor's theorem and the continuity of $\|\nabla^{2}F(x^*)\|$, we obtain
		\begin{align}
		{}&\|\nabla F(x^{(t+1)})\| \notag\\
		={}& \|\nabla F(x^{(t)})+\nabla^2 F(x^{(t)})(-v^{(t)}) + \int_0^1 [\nabla^2 F(x^{(t)} + sv^{(t)}) - \nabla^2 F(x^{(t)})](-v^{(t)})ds\| \notag \\
		\leq{}&\|\nabla F(x^{(t)})-\nabla^2 F(x^{(t)})v^{(t)}\| + \int_0^1 \|\nabla^2 F(x^{(t)} + sv^{(t)}) - \nabla^2 F(x^{(t)})\|ds\|v^{(t)}\| \notag\\
		\leq{}&\|r^{(t)}\| + 2\eta\|v^{(t)}\| \label{eq:conti} \\
		\leq{}& \gamma^{(t)}\|\nabla F(x^{(t)})\| + 2M\eta\|\nabla F(x^{(t)})\| \label{eq:M_mt}\\
		={}& (\gamma^{(t)}+ 2M\eta) \|\nabla F(x^{(t)})\|\notag.
		\end{align}
		Inequality~\eqref{eq:M_mt} follows the definition of $r^{(t)}$ and~\eqref{eq:M_def}. And inequality \eqref{eq:conti} is because $\nabla^2 F(x^{(t)})$ is continuous near $x^*$. If $\|x-x^*\|\leq \delta$, we have 
		\begin{align*}
		{}&\int_0^1 \|\nabla^2 F(x^{(t)} + sv^{(t)}) - \nabla^2 F(x^{(t)})]\|ds \\ 
		={}& \int_0^1 \|\nabla^2 F(x^{(t)} + sv^{(t)}) - \nabla^2 F(x^{*})+\nabla^2 F(x^{*}) - \nabla^2 F(x^{(t)})\|ds \\
		\leq{}&\int_0^1 [\|\nabla^2 F(x^{(t)} + sv^{(t)}) - \nabla^2 F(x^{*})\|+\|\nabla^2 F(x^{*}) - \nabla^2 F(x^{(t)})\|]ds \\
		\leq{}&\int_0^1[\eta + \eta]ds = 2 \eta.
		\end{align*}
		
		We define $\|y\|_* \equiv \|\nabla^2F(x^{*})y\|$. Therefore, we have
		\begin{align}
		\|x^{(t+1)} - x^{*}\|_* 
		\leq{}& \|\nabla F(x^{(t+1)})\| + \eta\|x^{(t+1)} - x^*\| \label{eq:aa}\\
		\leq{}& (\gamma^{(t)}+ 2M\eta) \|\nabla F(x^{(t)})\| + \eta\|x^{(t)}-v^{(t)} - x^*\|\notag\\
		\leq{}& (\gamma^{(t)}+ 2M\eta) \|\nabla F(x^{(t)})\| + \eta\|x^{(t)} - x^* \| + 2M\eta\|\nabla F(x^{(t)})\| \notag\\
		\leq{}&(\gamma^{(t)}+ 4M\eta)(\|x^{(t)} - x^* \|_*+\eta\|x^{(t)} - x^* \|) + \eta\|x^{(t)} - x^* \| \label{eq:bb}\\
		\leq{}&(\gamma^{(t)}+ 4M\eta)(\|x^{(t)} - x^* \|_*+M\eta\|x^{(t)} - x^* \|_*) + M\eta\|x^{(t)} - x^* \|_* \label{eq:cc}\\
		\leq{}& (\gamma^{(t)} + 6M\eta + 4M^2\eta^2)\|x^{(t)} - x^* \|_* \notag\\
		={}&(\gamma^{(t)} + 6M\eta) \|x^{(t)} - x^* \|_*. \label{eq:dd}
		\end{align}
		Equation~\eqref{eq:aa} and~\eqref{eq:bb} follow from~\eqref{eq:tl} and inequality~\eqref{eq:cc} is because of \eqref{eq:M_side}. Equation~\eqref{eq:dd} just omits $4M^2\eta^2$ since $\eta = o(1)$.
		
		Hence, we obtain 
		\[
		\|x^{(t+1)} - x^{*}\|_* \leq (\gamma^{(t)} + 6M\eta)\|x^{(t)} - x^* \|_*.
		\]
		
		The proof is similar when $\nabla^2 F(x)$ is Lipschitz continous near $x^*$ with parameter $L$. We have
		\begin{equation}
		\|\nabla^2F(x) - \nabla^2F(x^{*})\| \leq L\|x-x^*\| \label{eq:L-1}
		\end{equation} 
		and
		\begin{equation}
		\|\nabla F(x) - \nabla^2F(x^{*})(x - x^*)\| \leq L \|x - x^*\|^2,  \label{eq:L-2}
		\end{equation}
		when $x$ is sufficiently close to $x^*$.
		
		Then, combining Taylor's theorem, we obtain
		\begin{align}
		{}&\|\nabla F(x^{(t+1)})\| \notag\\
		={}& \|\nabla F(x^{(t)})+\nabla^2 F(x^{(t)})(-v^{(t)}) + \int_0^1 [\nabla^2 F(x^{(t)} + sv^{(t)}) - \nabla^2 F(x^{(t)})](-v^{(t)})ds\| \notag \\
		\leq{}&\|r^{(t)}\| + L\|v^{(t)}\|^2  \label{eq:L-3}\\
		\leq{}& \gamma^{(t)}\|\nabla F(x^{(t)})\| + 4LM^2\|\nabla F(x^{(t)})\|^2 \notag
		\end{align}
		where, inequality~\eqref{eq:L-3} follows from \eqref{eq:L-1}.
		Also, by~\eqref{eq:L-2}, we have
		\begin{align}
		\|x^{(t+1)} - x^{*}\|_* 
		\leq{}& \|\nabla F(x^{(t+1)})\| + L\|x^{(t+1)} - x^*\|^2 \notag \\
		\leq{}& \|\nabla F(x^{(t+1)})\| +L(M\|x^{(t)}-x^*\|_* + 2M\|\nabla F(x^{(t)})\|)^2\notag\\
		\leq{}& \gamma^{(t)}\|x^{(t)}-x^*\|_*+ 6LM^2\|x^{(t)}-x^*\|_*^2 + 8LM^2\|\nabla F(x^{(t)})\|^2+o(\|x^{(t)}-x^*\|_*^2)\notag\\
		\leq{}& \gamma^{(t)}\|x^{(t)}-x^*\|_*+ 14LM^2\|x^{(t)}-x^*\|_*^2 + o(\|x^{(t)}-x^*\|_*^2) \notag\\
		={}&\gamma^{(t)}\|x^{(t)}-x^*\|_*+ 14LM^2\|x^{(t)}-x^*\|_*^2 \notag
		\end{align}
		Therefore, we obtain
		\begin{equation}
		\|x^{(t+1)} - x^{*}\|_*  \leq \gamma^{(t)}\|x^{(t)}-x^*\|_*+ 14LM^2\|x^{(t)}-x^*\|_*^2.
		\end{equation}
	\end{proof}
	
	\section{Proofs of theorems of Section~\ref{sec:frame}} \label{sec:app_proof}
	\begin{proof} {\bf{of Theorem~\ref{thm:univ_frm}}}\\
		We have 
		\begin{align*}
		\|r^{(t)}\| ={}& \|\nabla^2F(x^{(t)})v^{(t)} + \nabla F(x^{t}) \|\\
		={}& \|(-\nabla^2F(x^{(t)})[H^{(t)}]^{-1} + I)\| \cdot \|\nabla F(x^{t})\| \\
		={}& \|\nabla^2F(x^{(t)}) ([H^{(t)}]^{-1} - [\nabla^2F(x^{(t)})]^{-1})\| \cdot\|\nabla F(x^{(t)})\| \\
		={}& \|\nabla^2F(x^{(t)}) [\nabla^2F(x^{(t)})]^{-1}(\nabla^2F(x^{(t)}) - H^{(t)}) [H^{(t)}]^{-1}\| \cdot\|\nabla F(x^{(t)})\| \\
		={}& \|(\nabla^2F(x^{(t)}) - H^{(t)}) [H^{(t)}]^{-1}\|\cdot\|\nabla F(x^{(t)})\|
		\end{align*}
		For convergence rate analysis, the first convergence rate result can derived directly from Equation~\eqref{eq:conv_lin}. The second one follows from Equation~\eqref{eq:conv_lin} and $\frac{\|x^{(t+1)} - x^{*}\|_*}{\|x^{(t)} - x^{*}\|_*}\to 0$ when $\gamma^{(t)} \to 0$,  and $\eta = o(1)$. For the third convergence result, Equation~\eqref{eq:Lip_conv} leads to the convergence rate and shows that when $\|\nabla F(x^{(t)})\|$ is big enough that $\gamma^{(t)} = \OM(\|\nabla F(x^{(t)}))$ holds, sequence $\{x^{(t)}\}$ will start with a quadratic rate of convergence. However, when $\|\nabla F(x^{(t)})\|$ will decrease to a small value, and $\gamma^{(t)} = \OM(\|\nabla F(x^{(t)}))$ will not hold any more which leads to a linear convergence rate. The forth one is because $\frac{\|x^{(t+1)} - x^{*}\|_*}{\|x^{(t)} - x^{*}\|_*^2} = \OM(1)$ when $\gamma^{(t)} = \OM(\|\nabla F(x^{t})\|)$ by Equation~\eqref{eq:Lip_conv}.
	\end{proof}
	
	\begin{proof} {\bf{of Theorem~\ref{thm:NewSamp}}}\\
		We have
		\begin{align*}
		{}&\|(\nabla^2F(x^{(t)}) - H^{(t)}) [H^{(t)}]^{-1}\| \\
		={}& \|(\nabla^2F(x^{(t)}) -H_{|\SM^{(t)}|}+H_{|\SM^{(t)}|}- H^{(t)}) [H^{(t)}]^{-1}\| \\
		\leq{}&\|(\nabla^2F(x^{(t)}) -H_{|\SM^{(t)}|}\|\cdot\|[H^{(t)}]^{-1}\| + \|I-H_{|\SM^{(t)}|}[H^{(t)}]^{-1}\|
		\end{align*}
		
		By Lemma~\ref{lem:matrix_bnd} with $c = \lambda_{r+1}^{(t)}$ and $\|[H^{(t)}]^{-1}\| = \eta^{(t)}/\lambda_{r+1}^{t}$ , with probability at least $1-\delta$, it holds that
		\[
		\|(\nabla^2F(x^{(t)}) -H_{|\SM^{(t)}|}\|\cdot\|[H^{(t)}]^{-1}\| \leq \eta^{(t)}\frac{4K}{\lambda_{r+1}^{(t)}}\sqrt{\frac{\log (2p/\delta)}{|\SM^{(t)}|}}.
		\]
		Besises, we have
		\[
		\|I-H_{|\SM^{(t)}|}[H^{(t)}]^{-1}\| = 1-\eta^{(t)}\frac{\lambda_p^{(t)}}{\lambda^{(t)}_{r+1}}.
		\]
		Hence, $\gamma^{(t)}$ has the following upper bound:
		\[
		\gamma^{(t)} \leq 1-\eta^{(t)}\frac{\lambda_p^{(t)}}{\lambda^{(t)}_{r+1}}+ \eta^{(t)}\frac{4K}{\lambda_{r+1}^{(t)}}\sqrt{\frac{\log (2p/\delta)}{|\SM^{(t)}|}}.
		\]
		
		The convergence rate can be derived directly from Theorem~\ref{thm:univ_frm}.
	\end{proof}
	
	\begin{proof} {\bf{of Theorem~\ref{thm:Reg_subnewton}}}\\
		We have
		\begin{align*}
		{}&\|(\nabla^2F(x^{(t)}) - H^{(t)}) [H^{(t)}]^{-1}\| \\
		={}& \|(\nabla^2F(x^{(t)}) -H_{|\SM^{(t)}|}+H_{|\SM^{(t)}|}- H^{(t)}) [H^{(t)}]^{-1}\| \\
		\leq{}&\|(\nabla^2F(x^{(t)}) -H_{|\SM^{(t)}|}\|\cdot\|[H^{(t)}]^{-1}\| + \|I-H_{|\SM^{(t)}|}[H^{(t)}]^{-1}\|
		\end{align*}
		
		By Lemma~\ref{lem:matrix_bnd} and $\|[H^{(t)}]^{-1}\| = 1/(\lambda_{p}^{(t)}+\alpha)$ , with probability at least $1-\delta$, it holds that
		\[
		\|(\nabla^2F(x^{(t)}) -H_{|\SM^{(t)}|}\|\cdot\|[H^{(t)}]^{-1}\| \leq \frac{4K}{\lambda_{p}^{(t)}+\alpha}\sqrt{\frac{\log (2p/\delta)}{|\SM^{(t)}|}}.
		\]
		Besises, we have
		\[
		\|I-H_{|\SM^{(t)}|}[H^{(t)}]^{-1}\| = 1-\frac{\lambda_p^{(t)}}{\lambda^{(t)}_{p}+\alpha}.
		\]
		Hence, $\gamma^{(t)}$ has the following upper bound:
		\[
		\gamma^{(t)} \leq 1-\frac{\lambda_p^{(t)}}{\lambda^{(t)}_{p}+\alpha}+ \frac{4K}{\lambda_{p}^{(t)}+\alpha}\sqrt{\frac{\log (2p/\delta)}{|\SM^{(t)}|}}.
		\]
		
		The convergence rate can be derived directly from Theorem~\ref{thm:univ_frm}.
	\end{proof}
	\begin{proof} {\bf of Theorem~\ref{thm:inexact_p}}\\
		We denote $p^* = [H(x^{(t)})]^{-1}\nabla F(x^{t})$. By Equation~\eqref{eq:res_def}, we have
		\begin{align*}
		{}&\|\nabla^2F(x^{(t)})p^{(t)} - \nabla F(x^{(t)})\| \\
		={}&\|\nabla^2F(x^{(t)})(p^* + p^{(t)}-p^*) - \nabla F(x^{(t)}) \| \\
		\leq{}&\|\nabla^2F(x^{(t)})[H(x^{(t)})]^{-1}\nabla F(x^{(t)}) - \nabla F(x^{(t)})\| + \| \nabla^2F(x^{(t)})( p^{(t)}-p^* )\|\\
		\leq{}& \epsilon_1\|\nabla F(x^{(t)})\| + \| (\nabla^2F(x^{(t)}) - H^{(t)})(p^{(t)} - p^*) \| + \|H^{(t)}(p^{(t)} - p^*)\| \\
		\leq{}&  (\epsilon_0+\epsilon_1)\|\nabla F(x^{(t)})\| + \| (\nabla^2F(x^{(t)}) - H^{(t)})[H^{(t)}]^{-1}H^{(t)}(p^{(t)} - p^*)\| \\
		\leq{}& (\epsilon_0+\epsilon_1)\|\nabla F(x^{(t)})\| + \|(\nabla^2F(x^{(t)}) - H^{(t)})[H^{(t)}]^{-1}\|\cdot\|H^{(t)}(p^{(t)} - p^*)\| \\
		\leq{}& (\epsilon_0+\epsilon_1)\|\nabla F(x^{(t)})\| + \epsilon_0\epsilon_1\|\nabla F(x^{(t)})\|\\
		={}& (\epsilon_0 +(1+\epsilon_0)\epsilon_1)\|\nabla F(x^{(t)})\|\\
		={}& \gamma^{(t)} \|\nabla F(x^{(t)})\|.
		\end{align*}
		Using Theorem~\ref{thm:univ_frm} with $\gamma^{(t)} =\epsilon_0 +(1+\epsilon_0)\epsilon_1$, we get the convergence properties. 	
	\end{proof}
	
	\begin{proof}{\bf of Theorem~\ref{thm:sub_gradient}} \\
		First, we denote $p^* = [H(x^{(t)})]^{-1}\nabla F(x^{t})$. By Equation~\eqref{eq:res_def}, we have
		\begin{align*}
		{}&\|\nabla^2F(x^{(t)})p^{(t)} - \nabla F(x^{(t)})\| \\
		={}&\|\nabla^2F(x^{(t)})(p^* + p^{(t)}-p^*) - \nabla F(x^{(t)}) \| \\
		\leq{}&\|\nabla^2F(x^{(t)})[H(x^{(t)})]^{-1}\nabla F(x^{(t)}) - \nabla F(x^{(t)})\| + \| \nabla^2F(x^{(t)})( p^{(t)}-p^* )\|\\
		\leq{}& \epsilon_1\|\nabla F(x^{(t)})\| + \| \nabla^2F(x^{(t)})[H(x^{(t)})]^{-1}(\text{g}(x^{(t)}) - \nabla F(x^{(t)}))\| \\
		\leq{}&  \epsilon_1\|\nabla F(x^{(t)})\| + \epsilon_0\|\nabla^2F(x^{(t)})[H(x^{(t)})]^{-1}\| \|\nabla F(x^{(t)})\| \\
		={}& \gamma^{(t)} \|\nabla F(x^{(t)})\|
		\end{align*}
		Using Theorem~\ref{thm:univ_frm} with $\gamma^{(t)} =\epsilon_0\|\nabla^2F(x^{(t)})[H(x^{(t)})]^{-1}\|+\epsilon_1$, we get the convergence properties. 
	\end{proof}
	\section{Convergence Analysis of ReSubNewton and ReSkeNewton}\label{sec:proof_resubnewton}
	\begin{proof} {\bf{of Theorem~\ref{thm:sub_refine}}}\\
		By Lemma~\ref{lem:matrix_bnd}, when $|\mathcal{S}| \geq \frac{16K^2\log(2p/\delta)}{\sigma^2 \epsilon^2}$,  $H^{(t)}$ in Algorithm~\ref{alg:H_subsamp_iter} has the following property:
		\[
		\|H^{(t)} - \nabla^{2}F(x^{(t)})\| \leq \epsilon \sigma.
		\]
		Above property implies the following:
		\begin{align*}
		{}&\max_{\|x\| = 1, x \in\RB^{p}} |x^{T}(H^{(t)} - \nabla^{2}F(x^{(t)}))x| \leq \epsilon\sigma,\\
		\Rightarrow{}&-\epsilon\sigma \leq \max_{\|x\| = 1, x \in\RB^{p}} x^{T}(H^{(t)} - \nabla^{2}F(x^{(t)}))x \leq \epsilon\sigma \\
		\Rightarrow{}& -\epsilon\sigma \leq  x^{T}(H^{(t)} - \nabla^{2}F(x^{(t)}))x \leq \epsilon\sigma ,\text{ for all } \|x\| = 1, x \in\RB^{p}\\
		\Rightarrow{}	& H^{(t)} - \epsilon\sigma I\preceq \nabla^{2}F(x^{(t)}) \preceq H^{(t)} + \epsilon\sigma I\\
		\Rightarrow{}&(1-\epsilon)H^{(t)}  \preceq \nabla^{2}F(x^{(t)}) \preceq (1+\epsilon) H^{(t)}
		\end{align*}
		By Lemma~\ref{lem:refine}, after $k$ iterations of inner loop of Algorithm~\ref{alg:H_subsamp_iter}, we have
		\begin{equation}
		\|x^{(k)} -x^{*}\|_A \leq \epsilon^{k} \|x^{*}\|_A. \label{eq:conv}
		\end{equation}
		
		We also have
		\begin{align}
		&\|x^{(k)} -x^{*}\|_A \geq \sqrt{\sigma}\|x^{(k)} -x^{*}\|; \label{eq:x-x*}\\
		&x^{*} = [\nabla^{2}F(x^{(t)})]^{-1}\nabla F(x^{(t)}); \notag\\
		&\|x^{*}\|_A = \|\nabla F(x^{(t)})^T [\nabla^{2}F(x^{(t)})]^{-1} \nabla F(x^{(t)})\|^{1/2} \leq 1/\sqrt{\sigma}\|\nabla F(x^{(t)})\|. \label{eq:x*}
		\end{align}
		Hence, to satisfy the condition $\|r\|\leq tol$, it needs
		\begin{align}
		\|\nabla^{2}F(x^{(t)})(x^{(k)} - x^{*})\| \leq K\|x^{(k)} - x^{*}\| \leq tol \label{eq: resi}
		\end{align}
		Combining \eqref{eq:conv}, \eqref{eq:x-x*}, \eqref{eq:x*} and \eqref{eq: resi}, we reach the result that if
		\[
		k\geq \frac{\log \frac{K\|\nabla F(x^{(t)})\|}{\sigma tol}}{\log \frac{1}{\epsilon}},
		\]
		then $\|r\|\leq tol$.
		
		The convergence properties can be derived directly from Theorem~\ref{thm:univ_frm}.
	\end{proof}
	
	\begin{proof} {\bf{of Theorem~\ref{thm:sketch_refine}}}
		If $S$ is an $\epsilon$-subspace embedding matrix for $B(x^{(t)})$, then we have
		\begin{equation}
		(1-\epsilon) \nabla^{2}F(x^{(t)})\preceq [B(x^{(t)})]^{T}S^{T}SB(x^{(t)}) \preceq (1+\epsilon) \nabla^{2}F(x^{(t)})\label{eq: preceq_1}
		\end{equation}
		By simple transformation and omitting $\epsilon^{2}$, \eqref{eq: preceq_1} can be transformed into
		\[
		(1-\epsilon) [B(x^{(t)})]^{T}S^{T}S\nabla^{2}B(x^{(t)}) \preceq \nabla^{2}F(x^{(t)}) \preceq (1+\epsilon) [B(x^{(t)})]^{T}S^{T}SB(x^{(t)})
		\]
		The rest of proof is the same to that of Theorem~\ref{thm:sub_refine}.
	\end{proof}
	
	\begin{proof}{\bf{of Corollary~\ref{cor:rand_samp}}}
		We first define $A = B^{T}B$. We sample rows of $B$ uniformly and construct random sampling sketching matrix $\S$ as in Subsection~\ref{subsec:ske_mat}. Then, we have $\tilde{A} = B^TS^TSB = \frac{n}{|\mathcal{S}|}\sum_{i\in\mathcal{S}}B(i,:)^TB(i,:)$. In the construction of $S$, we need that $\frac{1}{n} \geq \beta\ell_i$ for all $i=1,\dots,n$, where $\ell_i$ is the $i$-th leverage score of $B$, hence, $1/\beta = \mu(B)$. So, when $|\mathcal{S}| \geq \OM(\frac{\mu(B)p\log(p/\delta)}{\epsilon^2})$, we have 
		\begin{align*}
		&(1-\epsilon)\tilde{A}\preceq A\preceq(1+\epsilon)\tilde{A} \\
		&(1-\epsilon)\frac{1}{n}\tilde{A}\preceq \frac{1}{n}A\preceq(1+\epsilon)\frac{1}{n}\tilde{A}\\
		&(1-\epsilon)H^{(t)} \leq \nabla^{2}F(x^{(t)}) \leq (1+\epsilon)H^{(t)} 
		\end{align*}
		The last Equation is because $\nabla^{2}F(x^{(t)}) = \frac{1}{n}B^TB$ and $H^{(t)} = \frac{1}{|\mathcal{S}|}\sum_{i\in\mathcal{S}}B(i,:)^TB(i,:)$. The rest of proof is the same to that of Theorem~\ref{thm:sub_refine}.
	\end{proof}
	\section{Examples of Analyzing Convergence Properties of some variants of Sub-sampled Newton methods} \label{sec:app_example}
	First we use Theorem~\ref{thm:univ_frm} to analysis the local convergence properties of  Algorithm~\ref{alg:H_subsamp}.
	\begin{theorem}\label{thm:H_subsamp}
		If Eqn.~\eqref{eq:k} and Eqn.~\eqref{eq:sigma} hold and let $0<\delta<1$	and $0<\epsilon<1/2$ be given. $|\SM|$ is set as Algorithm~\ref{alg:H_subsamp} and $H^{(t)}$ is constructed as in Algorithm~\ref{alg:H_subsamp}. Then for $t = 1,\dots, T$, we have the following convergence properties:
		\begin{enumerate}
			\item If $0<\epsilon<1/2$, sequence $\{x^{(t)}:t = 1,\dots,T\}$ converges linearly with probability $(1-\delta)^{T}$.
			\item If $\epsilon \to 0$ as $t$ grows, then  sequence $\{x^{(t)}:t = 1,\dots,T\} $ converges superlinearly with probability $(1-\delta)^{T}$.
			\item If $0<\epsilon<1/2$ is a constant, and $\nabla^{2}F(x^{(t)})$ is Lipschitz continuous, then  sequence $\{x^{(t)}:t = 1,\dots,T\}$ has a linear-quadratic convergence rate  with probability $(1-\delta)^{T}$.
		\end{enumerate}
	\end{theorem}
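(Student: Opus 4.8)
The plan is to reduce the entire statement to Theorem~\ref{thm:univ_frm} by controlling the single scalar $\gamma^{(t)} \triangleq \|(\nabla^2 F(x^{(t)}) - H^{(t)})[H^{(t)}]^{-1}\|$, since that theorem already converts bounds on $\gamma^{(t)}$ into each of the three convergence regimes claimed here. First I would invoke Lemma~\ref{lem:matrix_bnd} with the choice $c = \sigma$. The sample size $|\mathcal{S}| \geq \frac{16K^2\log(2p/\delta)}{\sigma^2\epsilon^2}$ prescribed in Algorithm~\ref{alg:H_subsamp} matches the lemma's requirement once one notes that this branch dominates $\frac{K\log(2p/\delta)}{\sigma\epsilon^2}$ (because $K \geq \sigma$ by \eqref{eq:k} and \eqref{eq:sigma}). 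Hence, at each iteration $t$, with probability at least $1-\delta$ the lemma delivers the two spectral estimates $\|H^{(t)} - \nabla^2 F(x^{(t)})\| \leq \epsilon\sigma$ and $\lambda_{\min}(H^{(t)}) \geq (1-\epsilon)\sigma$.

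The crux is then a single submultiplicative estimate:
\[
\gamma^{(t)} \leq \|\nabla^2 F(x^{(t)}) - H^{(t)}\| \cdot \|[H^{(t)}]^{-1}\| \leq \frac{\epsilon\sigma}{(1-\epsilon)\sigma} = \frac{\epsilon}{1-\epsilon}.
\]
For $0 < \epsilon < 1/2$ this gives $\gamma^{(t)} < 1$, which is exactly the precondition under which Theorem~\ref{thm:univ_frm} applies. Part~1 then follows from the first case of that theorem (linear convergence with rate bounded by $\frac{\epsilon}{1-\epsilon} < 1$). For part~2, letting $\epsilon \to 0$ as $t$ grows forces $\gamma^{(t)} \leq \frac{\epsilon}{1-\epsilon} \to 0$, which is the second case (superlinear). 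Part~3 simply adds the Lipschitz hypothesis on $\nabla^2 F$ and invokes the third case (linear-quadratic).

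It remains to account for the probability $(1-\delta)^T$. The spectral estimates above are required simultaneously at all $T$ iterations, and since each iteration draws its subsample independently, the intersection of these $T$ good events has probability at least $(1-\delta)^T$. I do not anticipate a genuine obstacle, as the statement is essentially a corollary of the framework theorem; the only points demanding care are verifying that the prescribed sample size satisfies the $\max$ in Lemma~\ref{lem:matrix_bnd} and correctly propagating the per-iteration failure probability into the product $(1-\delta)^T$.
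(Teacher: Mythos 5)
Your proposal is correct and follows essentially the same route as the paper's own proof: invoke Lemma~\ref{lem:matrix_bnd} with $c=\sigma$, bound $\gamma^{(t)} \leq \|\nabla^2F(x^{(t)})-H^{(t)}\|\cdot\|[H^{(t)}]^{-1}\| \leq \frac{\epsilon}{1-\epsilon}<1$, and then read off all three cases from Theorem~\ref{thm:univ_frm}. You additionally make explicit two points the paper leaves implicit --- that the prescribed sample size dominates the $\max$ in the lemma since $K \geq \sigma$, and that independence across iterations yields the $(1-\delta)^T$ probability --- which is a welcome tightening rather than a different argument.
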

	\begin{proof} {\bf{of Theorem~\ref{thm:H_subsamp}}}
		By Lemma~\ref{lem:matrix_bnd} with $c = \sigma$, we have
		\begin{align*}
		&\|(\nabla^2F(x^{(t)}) - H^{(t)})\| \leq \epsilon \sigma\\
		&\|[H^{(t)}]^{-1}\| \leq \frac{1}{(1-\epsilon)\sigma}
		\end{align*}
		By above result and the definition of $\gamma^{(t)}$ in Theorem~\ref{thm:univ_frm}, we obtain
		\[
		\gamma^{t} \leq \|(\nabla^2F(x^{(t)}) - H^{(t)})\|\cdot\|[H^{(t)}]^{-1}\|  \leq \frac{\epsilon}{1-\epsilon} < 1.
		\]
		The convergence rate can be derived directly from Theorem~\ref{thm:univ_frm}.
	\end{proof}
	
	For  Sketch Newton method \cite{pilanci2015newton}, a similar result can be reached.
	\begin{theorem}\label{thm:Sketch_newton}
		If Eqn.~\eqref{eq:k} and Eqn.~\eqref{eq:sigma} hold and let $0<\delta<1$ and $0<\epsilon<1/2$ be given. Assume Hessian matrix is of the form $B(x^{(t)})^TB(x^{(t)})$ and $B(x^{(t)})$ is available, where $B(x^{(t)})$ is a matrix  of dimension $n\times p$, $S\in\RB^{\ell \times n}$ is an $(\epsilon\frac{\sigma}{K})$-subspace embedding matrix for $B(x^{(t)})$ with probability at least $1-\delta$. Then for $t = 1,\dots, T$, Algorithm~\ref{alg:sketch_newton} has the following convergence properties:	
		\begin{enumerate}
			\item If $0<\epsilon<1/2$, sequence $\{x^{(t)}:t = 1,\dots,T\}$ converges linearly with probability $(1-\delta)^{T}$.
			\item If $\epsilon \to 0$ as $t$ grows, then  sequence $\{x^{(t)}:t = 1,\dots,T\}$ converges superlinearly with probability $(1-\delta)^{T}$.
			\item If $0<\epsilon<1/2$ is a constant, and $\nabla^{2}F(x^{(t)})$ is Lipschitz continuous, then  sequence $\{x^{(t)}:t = 1,\dots,T\}$ has a linear-quadratic convergence rate  with probability $(1-\delta)^{T}$.
		\end{enumerate}
	\end{theorem}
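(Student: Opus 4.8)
The plan is to reduce everything to Theorem~\ref{thm:univ_frm}: once we show that the residual parameter $\gamma^{(t)} \triangleq \|(\nabla^2F(x^{(t)}) - H^{(t)})[H^{(t)}]^{-1}\|$ satisfies $0<\gamma^{(t)}<1$ (and, in the relevant cases, $\gamma^{(t)}\to 0$), the three convergence statements follow verbatim from parts~1--3 of that theorem. Thus the proof will parallel that of Theorem~\ref{thm:H_subsamp}, the only change being that the sub-sampling concentration inequality (Lemma~\ref{lem:matrix_bnd}) is replaced by the subspace-embedding guarantee of Definition~\ref{lem:sub-embed}.

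First I would convert the embedding property into a two-sided L\"owner bound. Applying Definition~\ref{lem:sub-embed} with parameter $\epsilon\sigma/K$ to $A=B(x^{(t)})$, for every $x$ we have $\|SB(x^{(t)})x\|_2^2 = (1\pm\epsilon\tfrac{\sigma}{K})\|B(x^{(t)})x\|_2^2$, i.e.
\[
\Bigl(1-\epsilon\tfrac{\sigma}{K}\Bigr)\nabla^2F(x^{(t)}) \preceq H^{(t)} \preceq \Bigl(1+\epsilon\tfrac{\sigma}{K}\Bigr)\nabla^2F(x^{(t)}),
\]
since $\nabla^2F(x^{(t)}) = B(x^{(t)})^TB(x^{(t)})$ and $H^{(t)} = B(x^{(t)})^TS^TSB(x^{(t)})$. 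From the two-sided bound I would read off $\|\nabla^2F(x^{(t)}) - H^{(t)}\| \le \epsilon\tfrac{\sigma}{K}\|\nabla^2F(x^{(t)})\|$, and then use $\|\nabla^2F(x^{(t)})\|\le K$ (a consequence of \eqref{eq:k}) to get $\|\nabla^2F(x^{(t)}) - H^{(t)}\| \le \epsilon\sigma$. The lower bound together with \eqref{eq:sigma} yields $\lambda_{\min}(H^{(t)})\ge (1-\epsilon\tfrac{\sigma}{K})\sigma$, hence $\|[H^{(t)}]^{-1}\| \le \bigl((1-\epsilon\tfrac{\sigma}{K})\sigma\bigr)^{-1}$.

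Multiplying these two estimates then gives $\gamma^{(t)} \le \|\nabla^2F(x^{(t)}) - H^{(t)}\|\cdot\|[H^{(t)}]^{-1}\| \le \frac{\epsilon}{1-\epsilon\sigma/K} \le \frac{\epsilon}{1-\epsilon}$, where the last step uses $\sigma\le K$. For $0<\epsilon<1/2$ this is strictly less than $1$, exactly reproducing the bound obtained in Theorem~\ref{thm:H_subsamp}. Invoking Theorem~\ref{thm:univ_frm} now finishes the argument: $\gamma^{(t)}<1$ gives linear convergence (part~1); letting $\epsilon\to 0$ as $t$ grows forces $\gamma^{(t)}\to 0$ and hence superlinear convergence (part~2); and a constant $\epsilon$ together with Lipschitz-continuous $\nabla^2F$ gives the linear-quadratic rate (part~3). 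Finally, since a fresh sketch $S$ is drawn each iteration and its embedding property holds with probability at least $1-\delta$ independently across iterations, the stated guarantees hold with probability $(1-\delta)^T$ over $t=1,\dots,T$.

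The step I expect to be the main (if modest) obstacle is the bookkeeping in the spectral-to-L\"owner conversion, specifically verifying that the embedding parameter $\epsilon\sigma/K$ --- rather than $\epsilon$ --- is exactly what is needed to turn the \emph{relative} embedding error into the \emph{absolute} spectral error $\epsilon\sigma$ that matches the sub-sampled case; this is where the factor $\|\nabla^2F(x^{(t)})\|\le K$ enters. A secondary point to state carefully is the independence of the per-iteration embedding events, which is what licenses multiplying the $1-\delta$ success probabilities into $(1-\delta)^T$.
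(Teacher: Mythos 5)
Your proposal is correct and takes essentially the same route as the paper: bound $\gamma^{(t)} = \|(\nabla^2F(x^{(t)}) - H^{(t)})[H^{(t)}]^{-1}\| \leq \frac{\epsilon}{1-\epsilon} < 1$ using the $(\epsilon\frac{\sigma}{K})$-subspace-embedding property together with $\|\nabla^2 F(x^{(t)})\|\leq K$ and $\lambda_{\min}(\nabla^2 F(x^{(t)}))\geq\sigma$, then invoke Theorem~\ref{thm:univ_frm} for all three convergence claims. The only cosmetic difference is that the paper derives $\|\nabla^2F(x^{(t)}) - H^{(t)}\|\leq \epsilon\sigma$ through the SVD $B(x^{(t)})=U\Sigma V^T$ and the estimate $\|\Sigma(I-U^TS^TSU)\Sigma\|\leq K\cdot\epsilon\frac{\sigma}{K}$, whereas you read the same bound off the two-sided L\"owner inequality directly.
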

	\begin{proof}
		We give the SVD decomposition of $B(x^{(t)})$ as follow:
		\[
		B(x^{(t)}) = U\Sigma V^T.
		\]
		We give the bound of $\|\nabla^2F(x^{(t)}) - H^{(t)}\|$ as follow:
		\begin{align*}
		\|\nabla^2F(x^{(t)}) - H^{(t)}\| = \|\Sigma(I - U^TS^TSU)\Sigma\| \leq K*\frac{\sigma}{K}\epsilon = \epsilon\sigma,
		\end{align*}
		where the inequality follows from the property of  $(\epsilon\frac{\sigma}{K})$-subspace embedding.
		
		For $\|[H^{(t)}]^{-1}\|$, we have
		\[
		\|[H^{(t)}]^{-1}\| \leq \frac{1}{(1-\epsilon)\sigma}
		\]
		Hence, $\gamma^{(t)}$ can be bounded as follow:
		\[
		\gamma^{(t)} \leq \|(\nabla^2F(x^{(t)}) - H^{(t)})\|\cdot\|[H^{(t)}]^{-1}\|  \leq \epsilon\sigma\cdot\frac{1}{(1-\epsilon)\sigma} =  \frac{\epsilon}{1-\epsilon} < 1
		\]
		The convergence rate can be derived directly from Theorem~\ref{thm:univ_frm}.
	\end{proof}
	
	\bibliographystyle{plainnat}
	\bibliography{referee}
\end{document}